\documentclass[12pt,reqno]{amsart}

\usepackage{XCharter}

\usepackage{amsmath, amssymb, amsthm}
\usepackage{mathtools}        
\usepackage{mathrsfs}         
\usepackage{stmaryrd}         

\usepackage{tikz}
\usetikzlibrary{cd, decorations.pathmorphing, arrows, positioning}

\usepackage[top=1in, bottom=1in, left=2.5cm, right=2.5cm]{geometry}
\usepackage{parskip}

\usepackage{microtype}

\usepackage[shortlabels]{enumitem}  

\usepackage{appendix}
\usepackage{verbatim}
\usepackage{comment}
\usepackage{graphicx}
\usepackage{subfiles}
\usepackage{chngcntr}

\usepackage[colorlinks, pagebackref]{hyperref}
\hypersetup{
  colorlinks = true,
  citecolor  = red,
  linkcolor  = blue,
  urlcolor   = cyan
}

\theoremstyle{definition}
\newtheorem{para}{}[section]

\theoremstyle{plain}
\newtheorem{thm}[para]{Theorem}
\newtheorem{lem}[para]{Lemma}
\newtheorem{cor}[para]{Corollary}
\newtheorem{prop}[para]{Proposition}

\newtheorem*{mainthm}{Main Theorem}

\theoremstyle{definition}
\newtheorem{defn}[para]{Definition}

\newtheorem{example}[para]{Example}

\theoremstyle{remark}
\newtheorem{rem}[para]{Remark}

\newcommand{\bbA}{\mathbb{A}}  
\newcommand{\bbC}{\mathbb{C}}  
  \newcommand{\bbF}{\mathbb{F}}
\newcommand{\bbG}{\mathbb{G}}

  \newcommand{\bbP}{\mathbb{P}}
\newcommand{\bbQ}{\mathbb{Q}}

  \newcommand{\bbZ}{\mathbb{Z}}

  \newcommand{\bH}{\mathbf{H}}

  \newcommand{\cD}{\mathcal{D}}
  \newcommand{\cF}{\mathcal{F}}
\newcommand{\cG}{\mathcal{G}}  \newcommand{\cH}{\mathcal{H}}

\newcommand{\cM}{\mathcal{M}}  
\newcommand{\cO}{\mathcal{O}}  
\newcommand{\cQ}{\mathcal{Q}}

\newcommand{\rG}{\mathrm{G}}  \newcommand{\rH}{\mathrm{H}}

\newcommand{\ol}[1]{\overline{#1}}

\newcommand{\Zar}{\mathrm{Zar}}
\newcommand{\et}{\mathrm{\acute{e}t}}
\newcommand{\proet}{\mathrm{pro\text{-}\acute{e}t}}

\newcommand{\Ql}{\bbQ_{\ell}}

\DeclareMathOperator{\coker}{coker}

\DeclareMathOperator{\Div}{Div}

\DeclareMathOperator{\Gal}{Gal}

\DeclareMathOperator{\NS}{NS}
\DeclareMathOperator{\Pic}{Pic}

\DeclareMathOperator{\Spec}{Spec}

\DeclareMathOperator{\cosk}{cosk} 
\newcommand{\ZL}{\mathrm{ZL}}   
\newcommand{\sdh}{\mathrm{sdh}} 

\title{N\'eron--Severi groups of proper schemes over finite fields}
\author{K. V. Shuddhodan}
\address{Department of Mathematics, University of Notre Dame, Notre Dame, IN 46556, USA}
\email{skadattu@nd.edu}

\author{V. Srinivas}
\address{Department of Mathematics, University at Buffalo - SUNY, Buffalo, NY 14260, USA}
\email{vs74@buffalo.edu}
\date{\today}
\subjclass[2020]{Primary 14C25, 14F20; Secondary 14C22, 14G15}

\begin{document}

\begin{abstract}
Let \(X\) be a proper reduced scheme over a finite field \(k\), let \(\ell\) be a prime different from
\(\operatorname{char} k\), and write \(\ol X=X\times_{k}\ol k\) for its base change to an algebraic closure
\(\ol k\) of \(k\). Call a class in \(\rH^{2}_{\et}(\ol X,\bbZ_{\ell}(1))\) \emph{Zariski-locally trivial}
if it vanishes on a Zariski-open cover of \(\ol X\). We prove that the first Chern class map
identifies \(\NS(\ol X)\otimes\bbZ_{\ell}\) with the group of Zariski-locally trivial classes whose image
in \(\rH^{2}_{\et}(\ol X,\Ql(1))\) has weight zero. This is the finite-field analogue of a theorem of
Barbieri-Viale--Rosenschon--Srinivas for proper seminormal complex varieties. In the finite-field setting
neither seminormality nor irreducibility is needed.
\end{abstract}

\maketitle
\tableofcontents
\newpage

\section{Introduction}
\label{sec:introduction}

Let \(X\) be a smooth projective complex variety. The first Chern class embeds the N\'eron--Severi group
\(\NS(X)=\Pic(X)/\Pic^0(X)\) into \(\rH^2(X,\bbZ)\) and the resulting subgroup admits two (equivalent) 
descriptions:

\begin{enumerate}[(a)]

\item  By the Lefschetz theorem on \((1,1)\)-classes it is \(F^1\cap\rH^2(X,\bbZ)\), the integral classes in the first level of the Hodge filtration.

\item  It is also the subgroup \(\rH^2_{\ZL}(X,\bbZ)\subseteq\rH^2(X,\bbZ)\) of
  \emph{Zariski-locally trivial} classes, those vanishing on some Zariski-open cover.

\end{enumerate}

 For singular \(X\) both descriptions fail. One has only the inclusions \(\NS(X)\subseteq F^1\cap\rH^2(X,\bbZ)\) and
\(\NS(X)\subseteq\rH^2_{\ZL}(X,\bbZ)\), and each can be strict \cite{BarbieriVialeSrinivas94}. However, in many
situations the intersection of the two subgroups does recover the N\'eron--Severi group, that is
\(\NS(X)=F^1\cap\rH^2_{\ZL}(X,\bbZ)\). This was proved for normal projective varieties by Biswas--Srinivas \cite{BiswasSrinivas00} and for proper seminormal varieties
by Barbieri-Viale, Rosenschon and Srinivas \cite{BRS09}. Over \(\bbC\) seminormality cannot be dropped \cite[Section 5]{BiswasSrinivas00}.

This article proves the corresponding statement over a finite field. Let \(X\) be a proper reduced scheme
over \(k=\bbF_q\), let \(\ell\) be a prime different from \(p=\operatorname{char}k\), and put
\(\ol X=X\times_k\ol k\). Call a class in \(\rH^2_{\et}(\ol X,\bbZ_{\ell}(1))\) \emph{Zariski-locally
trivial} if it restricts to zero on some Zariski-open cover of \(\ol X\), and write
\[
  \rH^2_{\ZL}(\ol X,\bbZ_{\ell}(1))\ \subseteq\ \rH^2_{\et}(\ol X,\bbZ_{\ell}(1))
\]
for the subgroup of such classes. The first Chern class \(c_1\colon\Pic(\ol X)\to\rH^2_{\et}(\ol X,\bbZ_{\ell}(1))\)
lands in this subgroup, and the main result characterises the N\'eron--Severi group by a single condition
on weights (see Corollary~\ref{cor:integral-form}).

\begin{mainthm}
With notation as above, the first Chern class induces an isomorphism
\begin{equation}\label{eq:NS-integral}
  \NS(\ol X)\otimes\bbZ_{\ell}\ \xrightarrow{\ \sim\ }\
  \bigl\{\alpha\in\rH^2_{\ZL}(\ol X,\bbZ_{\ell}(1)) : \alpha\otimes\Ql\in\rH^2_{\et}(\ol X,\Ql(1))^{w=0}\bigr\}
\end{equation}
where \(\rH^2_{\et}(\ol X,\Ql(1))^{w=0}\) is the part on which the geometric Frobenius acts with eigenvalues
of absolute value \(1\).
\end{mainthm}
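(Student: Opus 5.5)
The plan is to reduce the statement to a comparison between Zariski-locally trivial classes and the cohomology of the Zariski sheaf of units, and then use weights to cut out the Néron--Severi part.

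\textbf{Step 1 (identifying Zariski-locally trivial classes).} Let \(\varepsilon\colon\ol X_{\et}\to\ol X_{\Zar}\) be the change-of-topology map. The Leray spectral sequence for \(\varepsilon\) and \(\mu_{\ell^n}\), combined with the Kummer sequence, identifies the kernel of
\[
\rH^2_{\et}(\ol X,\mu_{\ell^n})\to\rH^0(\ol X,R^2\varepsilon_*\mu_{\ell^n})
\]
with a quotient of \(\rH^1_{\Zar}(\ol X,R^1\varepsilon_*\mu_{\ell^n})\) extended by \(\rH^2_{\Zar}(\ol X,\varepsilon_*\mu_{\ell^n})\). Here \(R^1\varepsilon_*\mu_{\ell^n}=\cO^*/\ell^n\) as Zariski sheaves, by Hilbert 90 and the local triviality of \(\Pic\). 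We then pass to the limit over \(n\) and use that \(\rH^i_{\Zar}(\ol X,\mu_{\ell^n})=0\) for \(i>0\), since constant sheaves on irreducible spaces are flasque. For reduced, possibly reducible \(\ol X\), we instead pass to a Mayer--Vietoris or \v{C}ech argument over the components, and this gives a term \(\rH^1_{\Zar}(\ol X,\bbZ_\ell(1))\) coming from the combinatorics of the components. The expected outcome is an exact sequence
\[
0\to \Pic(\ol X)\otimes\bbZ_\ell \to \rH^2_{\ZL}(\ol X,\bbZ_\ell(1))\to T
\]
up to torsion corrections, where \(T\) is built from \(\rH^1_{\Zar}(\ol X,\cO^*)\)-type and combinatorial terms. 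Since \(\Pic^0(\ol X)\) is \(\ell\)-divisible up to a finite group, the left term equals \(\NS(\ol X)\otimes\bbZ_\ell\), so the real content is to control the cokernel \(T\).

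\textbf{Step 2 (weight zero).} Classes coming from \(\NS(\ol X)\) have weight zero. Indeed, \(\NS(\ol X)\) is finitely generated, and Frobenius permutes a finite set of divisor classes up to algebraic equivalence, so its eigenvalues are roots of unity. For the reverse inclusion, we show that the cokernel \(T\otimes\Ql\) has no weight-zero part. To do this, use a proper hypercovering \(\ol X_\bullet\to\ol X\) by smooth projective schemes (de Jong), which is defined over a finite extension. Weight theory (Deligne, Weil II) gives \(\rH^2(\ol X,\Ql(1))\) weights in \(\{-2,-1,0\}\), with the weight-\(0\) graded piece equal to the kernel of \(\rH^2(\ol X_0)(1)\to\rH^2(\ol X_1)(1)\), and the lower weights coming from \(\rH^1(\ol X_\bullet)\) and \(\rH^0(\ol X_\bullet)\). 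The Zariski-locally trivial classes of weight \(-1\) and \(-2\) should be exactly those produced by \(\Pic^0\)-type extensions (semiabelian pieces) and by torus parts. The weight condition then kills them rationally, leaving only classes that come from line bundles.

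\textbf{Step 3 (integrality and the main obstacle).} A weight-zero Zariski-locally trivial class \(\alpha\) has image in \(\rH^2(\ol X_0,\Ql(1))\) that is locally trivial on the smooth \(\ol X_0\), hence algebraic by the smooth case of the Main Theorem (Zariski-locally trivial equals \(\NS\otimes\bbZ_\ell\) via Bloch--Ogus/Gersten). The main obstacle is descending this line bundle datum back to \(\ol X\): we need to show that a divisor class on \(\ol X_0\) compatible on \(\ol X_1\) up to algebraic equivalence comes from \(\Pic(\ol X)\), modulo \(\Pic^0\). Over \(\bbC\) this step requires seminormality. Over \(\ol{\bbF}_q\) I expect it to hold because the obstruction group, a unipotent or additive-type part of the \(\Pic\) of the hypercover (from \(\cO/\cO^{sn}\)), is \(p\)-primary torsion, since every point is defined over a finite field. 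It is therefore invisible after \(\otimes\bbZ_\ell\), while the toric obstruction is excluded by weights. First I would try to show \(\Pic(\ol X)\to\Pic(\ol X^{sn})\) has \(p\)-power torsion kernel and cokernel and reduce to the seminormal case, imitating \cite{BRS09}. Finally, \(\ell\)-adic saturation would be handled by the finiteness of \(\rH^2(\ol X,\bbZ_\ell(1))_{tors}\) together with a Kummer-sequence argument on torsion classes.
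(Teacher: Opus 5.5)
You locate the main obstacle correctly but do not resolve it. The missing step is rational descent of N\'eron--Severi classes along the hypercover, i.e.\ surjectivity of $\pi_\bullet^*\colon\NS(\ol X)\otimes\Ql\to\operatorname{Eq}\bigl(\NS(\ol X_0)\otimes\Ql\rightrightarrows\NS(\ol X_1)\otimes\Ql\bigr)$. Your proposal offers only an expectation here, and the justification you sketch does not reach it.

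Reducing to $X^{\mathrm{sn}}$ is fine; the paper does this too, since $(\rho_*\cO_{X^{\mathrm{sn}}})^{p^N}\subseteq\cO_X$ makes $\Pic(\ol X)\to\Pic(\ol X^{\mathrm{sn}})$ have $p$-primary kernel and cokernel. After that, the real question is whether line bundles descend at all along the proper, non-flat maps of the hypercover (alterations, matching maps). Seminormality does not by itself supply such a statement; via sdh it gives only degree-zero descent of units. Nor does ``every point is defined over a finite field''. You also misidentify the obstruction. Once a descent spectral sequence for $\bbG_m$ with $\rH^1=\Pic$ is available, the error terms are the \v{C}ech cohomology of the global units $\Gamma(\ol X_a,\cO^\times)$. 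Passing from $\Pic$ to $\NS$ then adds the groups $\Pic^0(\ol X_a)$, which have abelian and toric parts and not just unipotent ones. All of these die after $\otimes\Ql$ because they are torsion over $\ol k$ (Lemma~\ref{lem:torsion-ag}), not because of weights.

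The missing idea is the paper's Lemma~\ref{lem:pic-descent-Fpbar}. Pass to perfections: there an $h$-hypercover becomes a $v$-hypercover, and vector bundles form a $v$-stack by \cite{BhattScholze17}. Then use $\Pic(Y)[1/p]\cong\Pic(Y^{\mathrm{perf}})$, and note that the \v{C}ech terms built from $\Gamma(Y_a,\cO^\times)[1/p]$ are torsion. This gives rational Picard descent, hence N\'eron--Severi descent (Proposition~\ref{prop:NS-descent-Fpbar}), for every proper reduced $X$ with no seminormality hypothesis. Your detour through seminormal, BRS-style arguments is therefore not what makes this step work.

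With that input your route does close, and it is genuinely shorter than the paper's.
\begin{itemize}
\item For $\alpha$ as in the statement, $\pi_0^*\alpha$ is Zariski-locally trivial on the smooth proper $\ol X_0$. So rationally $\pi_0^*\alpha=c_1(M)$ with $M\in\NS(\ol X_0)\otimes\Ql$ (Proposition~\ref{cor:cycle-class-smooth-proper} and Lemma~\ref{lem:leray-edge}).
\item Injectivity of $c_1$ on $\NS(\ol X_1)\otimes\Ql$ puts $M$ in the equalizer, and descent gives $M=\pi_0^*L$.
\item Then $\alpha\otimes\Ql-c_1(L)$ is a weight-zero class killed by $\pi_0^*$. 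You still have to say why it vanishes, since your Step~3 stops at the descent. The reason: $\ker\pi_0^*$ is the first step of the skeletal filtration, whose graded pieces have weights $-1$ and $-2$. This is your (correct) Step~2, as in the proof of Lemma~\ref{lem:fo-descent}, so $\pi_0^*$ is injective on $\rH^2_{\et}(\ol X,\Ql(1))^{w=0}$.
\item Saturation, via torsion-freeness of $T_\ell\mathrm{Br}(\ol X)$ in \eqref{eq:integral-NS-ses}, finishes.
\end{itemize}

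The paper instead proves the rational statement in the form $\NS(\ol X)\otimes\Ql\cong\rH^1_{\Zar}(\ol X,\cH^1)^{w=0}$. It does so through the seminormal reduction, sdh degree-zero descent of units, the auxiliary sheaf $\cF$, and Propositions~\ref{prop:u-inj} and~\ref{prop:delta-cokernel}. Your argument avoids $\cF$ and seminormality altogether.

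Your Step~1 can be dropped. Note in any case that $\Pic(\ol X)\otimes_{\bbZ}\bbZ_\ell$ is not $\NS(\ol X)\otimes\bbZ_\ell$: it contains $\Pic^0(\ol X)\otimes_{\bbZ}\bbZ_\ell=\Pic^0(\ol X)[\ell^\infty]$, and $\Pic^0(\ol X)$ is fully $\ell$-divisible. The correct object is $\varprojlim_n\Pic(\ol X)/\ell^n$, as in (\ref{para:integral-form}).
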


Over the complex numbers the results of \cite{BiswasSrinivas00,BRS09} impose the Hodge-filtration
condition \(F^1\). The weight-zero condition plays that role here. Those results describe \(\NS\) for a
normal projective variety and for a proper seminormal variety. The theorem above holds for \emph{all}
proper reduced \(k\)-schemes.

\emph{Strategy of the proof.} Write \(\cH^1\) for the Zariski sheaf \(\cH^1_{\ol X}(\Ql(1))\) on \(\ol X\)
associated to the presheaf \(U\mapsto\rH^1_{\et}(U,\Ql(1))\). The scheme and the coefficients \(\Ql(1)\)
are suppressed from \(\cH^1\) when no confusion can arise. The rational invariant of interest is the
Zariski cohomology \(\rH^1_{\Zar}(\ol X,\cH^1)\).
In Theorem~\ref{thm:main-result} we prove the rational isomorphism
\begin{equation}\label{eq:NS-rational}
  \NS(\ol X)\otimes\Ql\ \xrightarrow{\ \sim\ }\ \rH^1_{\Zar}(\ol X, \cH^1)^{w=0},
\end{equation}
induced by the cycle class and from this we deduce the integral statement~\eqref{eq:NS-integral} of the introduction
(Corollary~\ref{cor:integral-form}).

The isomorphism~\eqref{eq:NS-rational} is proved by first establishing it for \(\ol X\) smooth and proper,
and then descent via a smooth proper hypercover. This descent argument is the crux of the matter, as in
\cite{BiswasSrinivas00,BRS09}. Over a smooth proper variety\footnote{For a connected smooth proper \(Y/\ol k\), there is in fact a stronger sheaf-level decomposition \(\cH^1_Y(\Ql(1))\cong \cO_Y^\times\otimes\Ql\oplus \underline{\rH^1_{\et}(Y,\Ql(1))}\). For disconnected \(Y\), this is read componentwise. We do not use this decomposition in the present proof, only the cohomological consequences stated below.} it is a basic instance of Bloch--Ogus theory and
\(\rH^1_{\Zar}(\ol X,\cH^1)\simeq \NS(\ol X)\otimes\Ql\), so the weight condition is vacuous.

The details of the descent argument are a point of departure from the argument over \(\bbC\). Bhatt--Scholze's \(v\)-descent for vector bundles \cite[Theorem~4.1(ii)]{BhattScholze17} supplies rational descent for line bundles, a mechanism specific to characteristic \(p\). We then reduce to the case of seminormal proper schemes. In this case the degree-\(0\) descent for units is supplied separately by the sdh
topology. A final diagram with the auxiliary sheaf \(\cF\) of \S\ref{sec:auxiliary-sheaf-F}, cohomological descent, and weight separation then
identifies \(\rH^1_{\Zar}(\ol X, \cH^1)^{w=0}\) with \(\NS(\ol X)\otimes\Ql\).

\emph{Necessity of both conditions.} Restricting to Zariski-locally
trivial classes is essential even in the smooth proper setting. Indeed if \(\ol X\) is smooth proper over \(\ol k\), then all of \(\rH^2_{\et}(\ol X,\Ql(1))\) is pure of weight \(0\) \cite[Th\'{e}or\`{e}me~1.6]{Del80}. However the first Chern class map spans only the algebraic part
\(\NS(\ol X)\otimes\Ql\), whose rank (the Picard rank of \(\ol X\)) can be strictly smaller than the second Betti number \(b_{2}\). For a non-supersingular abelian surface the Picard rank is \(\le 4<6=b_{2}\), and for a finite-height K3 surface it is \(\le 20<22=b_{2}\). In either case \(\rH^2_{\et}(\ol X,\Ql(1))^{w=0}\) is strictly larger than \(\NS(\ol X)\otimes\Ql\).

The weight-zero condition is equally necessary. For example the complete-intersection surface \(X\subset\bbP^4\) of Barbieri-Viale and Srinivas \cite[Example~1]{BarbieriVialeSrinivas93}, a double cover of the cubic cone over an elliptic curve \(E\), branched along a quadric section, is normal and projective, yet \(\rH^1_{\Zar}(\ol X, \cH^1)\) acquires a nonzero weight-\((-1)\) summand \(\rH^1_{\et}(\ol E,\Ql(1))\) (Example~\ref{ex:normal-weight-example}).

\emph{Relation to the Tate conjecture.} Recall the Tate conjecture for divisors, that for \(\ol Y\) smooth and proper
the cycle class maps \(\NS(\ol Y)\otimes\Ql\) isomorphically onto the finite-order part\footnote{For a Frobenius
module \(V\), the \emph{finite-order part} \(V_{\mathrm{fo}}:=\bigcup_{n\ge1}\ker(F^{n}-\mathrm{id}_V)\) is the sum
of the eigenspaces whose eigenvalues are roots of unity. It satisfies \(V_{\mathrm{fo}}\subseteq V^{w=0}\)
(\ref{para:finite-order-part}).} \(\rH^2_{\et}(\ol Y,\Ql(1))_{\mathrm{fo}}\). Assuming the conjecture,
Theorem~\ref{thm:main-result} carries this identification to every proper reduced \(\ol X\):
\[
  \NS(\ol X)\otimes\Ql=\rH^1_{\Zar}(\ol X, \cH^1)_{\mathrm{fo}}=\rH^2_{\et}(\ol X,\Ql(1))_{\mathrm{fo}}
\]
(Proposition~\ref{prop:tate-divisors-fo}), where the first identification is unconditional.

\section{Setup and Preliminaries}
\label{sec:setup}

\begin{para}[Conventions]\label{para:conventions}
  Throughout this article we fix a finite field \(k=\bbF_{q}\) of characteristic \(p\) and denote by
  \(\overline{k}\) an algebraic closure of \(k\). We also fix a prime \(\ell \neq p\). When we state a
  result over a base more general than \(k\) or \(\ol{k}\), we use the letter \(K\) for the base field.
  From \S\ref{sec:dlog-smooth-proper} onward the base is either \(k\) or \(\ol{k}\).

  Unless otherwise stated, all schemes are assumed to be of finite type\footnote{The only exception to
  this will be Picard schemes.} and separated over the chosen base field. A variety is a geometrically
  integral scheme. For a scheme \(X/k\), we denote by \(\ol{X}\) the base change of \(X\) to \(\ol{k}\).

  For \(\tau\in\{\Zar,\et,\proet\}\), write \(X_{\tau}\) for the small \(\tau\)-site of \(X\), in the
  standard sense of \cite[Tag~020K]{stacks-project} (which also treats the big sites used below), whose
  objects are the schemes \(\tau\)-local over \(X\), namely the Zariski-open, \'{e}tale, and pro-\'{e}tale
  \(X\)-schemes respectively (for \(\proet\), see \cite{BS15}). The \(\sdh\),
  \(h\), and \(v\)-topologies are used on big sites. For
  \(\tau\in\{\sdh,h,v\}\) and \(X\) fixed, \(X_{\tau}\) denotes the big \(\tau\)-site, whose underlying
  category is that of finite-type \(X\)-schemes. The \(\sdh\)-topology is recalled in
  \S\ref{sec:sdh-seminormality}, and the \(h\)- and \(v\)-topologies enter the Picard-descent argument.
  All sheaf cohomology in this paper is taken on the displayed site. When a sheaf is constructed on a big
  site, we tacitly restrict to the relevant induced site. This restriction preserves the sheaf property.
\end{para}

\begin{para}[Tensoring Zariski sheaves with \(\Ql\)]\label{para:tensor-Ql-sheaves}
Let \(T\) be a noetherian scheme and let \(\mathcal A\) be an abelian sheaf on \(T_{\Zar}\). We write
\(\mathcal A\otimes\Ql\) for \(\mathcal A\otimes_{\bbZ}\underline{\Ql}\), where \(\underline{\Ql}\) is
the constant Zariski sheaf. For every open \(V\subset T\), we claim that
\[
  \Gamma(V,\mathcal A\otimes\Ql)
  \cong
  \Gamma(V,\mathcal A)\otimes_{\bbZ}\Ql.
\]
To see this, note that \(\Ql\) is the filtered colimit of its finitely generated subgroups \(N\subset \Ql\), each of
which is finite free over \(\bbZ\). Since sheafification preserves colimits, we have
\(\underline{\Ql}\cong\varinjlim_N\underline N\), and hence
\[
  \mathcal A\otimes\Ql
  \cong
  \varinjlim_N(\mathcal A\otimes_{\bbZ}N).
\]
For \(N\cong\bbZ^r\), one has \(\mathcal A\otimes_{\bbZ}N\cong\mathcal A^{\oplus r}\). Since \(V\) is
noetherian, sections over \(V\) commute with filtered colimits of Zariski sheaves by
\cite[Tag~0738]{stacks-project}, giving the claimed isomorphism on sections.

Tensoring with \(\Ql\) preserves flasque Zariski sheaves on noetherian schemes. For \(W\subset V\), the
restriction map after tensoring is identified with \(\Gamma(V,\mathcal A)\otimes\Ql\to\Gamma(W,\mathcal
A)\otimes\Ql\), hence is surjective if \(\mathcal A\) is flasque.
\end{para}

\begin{lem}\label{lem:pic-via-cartier}
Let \(X\) be a reduced noetherian scheme. Then
\[
  \rH^{0}_{\Zar}(X,\cO_{X}^{\times}\otimes\Ql)=\Gamma(X,\cO_{X}^{\times})\otimes\Ql,
  \qquad
  \rH^{1}_{\Zar}(X,\cO_{X}^{\times}\otimes\Ql)=\Pic(X)\otimes\Ql.
\]
If furthermore \(X\) is regular, then \(\rH^{q}_{\Zar}(X,\cO_{X}^{\times}\otimes\Ql)=0\) for \(q\geq 2\).
\end{lem}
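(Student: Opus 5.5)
Since \(\Ql\) is torsion-free, hence flat over \(\bbZ\), the key point is that Zariski cohomology of a noetherian space commutes with tensoring by \(\Ql\). I will show
\[
  \rH^q_{\Zar}(X,\mathcal A\otimes\Ql)\cong \rH^q_{\Zar}(X,\mathcal A)\otimes_{\bbZ}\Ql
\]
for every abelian Zariski sheaf \(\mathcal A\) on a noetherian scheme. Writing \(\underline{\Ql}=\varinjlim_N\underline N\) as in \ref{para:tensor-Ql-sheaves}, we get \(\mathcal A\otimes\Ql\cong\varinjlim_N\mathcal A^{\oplus r_N}\). Zariski cohomology on a noetherian topological space commutes with filtered colimits of sheaves (Grothendieck; \cite[Tag~01FF]{stacks-project}), so
\[
  \rH^q(X,\mathcal A\otimes\Ql)\cong\varinjlim_N \rH^q(X,\mathcal A)\otimes_{\bbZ} N=\rH^q(X,\mathcal A)\otimes\Ql .
\]
One must check that the transition maps match. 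They are induced by the inclusions \(N\subset N'\), and \(\rH^q\) is additive, so they are compatible with \(\rH^q(X,\mathcal A)\otimes N\to \rH^q(X,\mathcal A)\otimes N'\). Alternatively, one can take a flasque resolution of \(\mathcal A\) and use the last paragraph of \ref{para:tensor-Ql-sheaves} together with the exactness of \(-\otimes\Ql\).

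Applying this with \(\mathcal A=\cO_X^\times\) gives the first two identifications at once. In degree \(0\) it is the statement on global sections, already contained in \ref{para:tensor-Ql-sheaves}. In degree \(1\) we use the standard isomorphism \(\rH^1_{\Zar}(X,\cO_X^\times)\cong\Pic(X)\), valid for any ringed space via Čech cocycles and transition functions. Reducedness is not really needed for these two statements. It is presumably included so that the Cartier-divisor description applies: on a reduced noetherian scheme the sheaf \(\mathcal K_X^\times\) of invertible meromorphic functions is well behaved.

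For regular \(X\), I would use the divisor sequence. Since \(X\) is regular, it is a finite disjoint union of integral regular schemes, so I may assume \(X\) is integral with generic point \(\eta\). Local rings of \(X\) are then UFDs (Auslander--Buchsbaum), so Weil and Cartier divisors agree locally. This gives the exact sequence of Zariski sheaves
\[
  0\to\cO_X^\times\to \underline{K(X)^\times}\to \bigoplus_{x\in X^{(1)}} (i_x)_*\bbZ\to 0 .
\]
The middle term is a constant sheaf on an irreducible space, and the right-hand term is a direct sum of skyscrapers at points of codimension one. Both are flasque: a constant sheaf on an irreducible space is flasque, and \(\bigoplus_x(i_x)_*\bbZ\) is flasque because a section on an open set is a locally finite set of integers supported on codimension-one points, and any such section restricts surjectively. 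Hence \(\rH^q(X,\cO_X^\times)=0\) for \(q\ge2\). Tensoring with \(\Ql\) and applying the first paragraph gives the vanishing rationally. Alternatively, tensor the sequence by \(\Ql\), which keeps it exact since the sequence is exact stalkwise and \(\Ql\) is flat, and keeps the outer terms flasque by \ref{para:tensor-Ql-sheaves}.

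The main obstacle is the flasqueness of the divisor sheaf together with the exactness on the right. Exactness on the right is the statement that every local Weil divisor is principal, which uses that regular local rings are UFDs. Flasqueness of \(\bigoplus_x (i_x)_*\bbZ\) needs a short argument: given a divisor on an open \(W\subset V\), take its closure in \(V\). This closure is a divisor, since the set of codimension-one points in the support is finite on a noetherian space.
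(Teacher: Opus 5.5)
Your proof is correct. For the vanishing in the regular case you use the same flasque Weil-divisor resolution as the paper, but for the first two identifications your route is genuinely different.

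The paper works directly with the Cartier divisor sequence \(0\to\cO_X^\times\to\cM_X^\times\to\cD_X\to0\). It uses that \(\cM_X^\times\), and hence \(\cM_X^\times\otimes\Ql\), is flasque on a reduced noetherian scheme. It then reads off \(\rH^1\) of both \(\cO_X^\times\) and \(\cO_X^\times\otimes\Ql\) as the cokernel of \(\Gamma(X,\cM_X^\times)\to\Gamma(X,\cD_X)\), before and after \(\otimes\Ql\). This is exactly where reducedness enters.

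You instead prove the general comparison \(\rH^q_{\Zar}(X,\mathcal A\otimes\Ql)\cong\rH^q_{\Zar}(X,\mathcal A)\otimes\Ql\) for every abelian sheaf on a noetherian scheme. You do this either by commuting cohomology with filtered colimits on a noetherian space, or by tensoring a flasque resolution, which the flasque-preservation remark in (\ref{para:tensor-Ql-sheaves}) permits. Your route handles all degrees at once and shows, as you note, that reducedness is superfluous for the first two statements. It also reduces the regular case to the integral vanishing \(\rH^q_{\Zar}(X,\cO_X^\times)=0\) for \(q\geq2\). The paper's argument avoids the colimit theorem, but it is tied to degree \(\leq1\) and to the sheaf of meromorphic functions.

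One small simplification in your flasqueness argument: the ``closure'' step is unnecessary. By quasi-compactness, a section of \(\bigoplus_{x\in X^{(1)}}(i_x)_*\bbZ\) over \(W\) is a finite formal sum of codimension-one points lying in \(W\subset V\). That sum is already a section over \(V\).
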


\begin{proof}
The first equality follows from (\ref{para:tensor-Ql-sheaves}) applied to \(\cO_X^\times\), since
\[
  \rH^{0}_{\Zar}(X,\cO_{X}^{\times}\otimes\Ql)
  =\Gamma(X,\cO_{X}^{\times}\otimes\Ql)
  =\Gamma(X,\cO_{X}^{\times})\otimes\Ql.
\]

Let \(\cM_{X}\) denote the sheaf of total fractions on \(X\), obtained by sheafifying
\(U\mapsto\operatorname{Frac}(\cO_{X}(U))\). Its stalks are
\[
  \cM_{X,x}=\prod_{\eta\rightsquigarrow x}\cO_{X,\eta},
\]
the product over the generic points of the irreducible components passing through \(x\). The sheaf of
units \(\cM_{X}^{\times}\) is flasque on the noetherian reduced \(X\)
\cite[Tag~01X1]{stacks-project}, and so is \(\cM_{X}^{\times}\otimes\Ql\) by
(\ref{para:tensor-Ql-sheaves}). The short exact sequence of Zariski sheaves of abelian groups
\[
  0\longrightarrow\cO_{X}^{\times}\longrightarrow\cM_{X}^{\times}\longrightarrow\cD_{X}\longrightarrow 0,
\]
where \(\cD_{X}=\cM_{X}^{\times}/\cO_{X}^{\times}\) is the sheaf of Cartier divisors, remains exact after
tensoring with \(\Ql\). Its long exact sequence in cohomology, together with
\(\rH^{\geq 1}_{\Zar}(X,\cM_{X}^{\times}\otimes\Ql)=0\), gives
\[
  \rH^{1}_{\Zar}(X,\cO_{X}^{\times}\otimes\Ql)=\coker\bigl(\Gamma(X,\cM_{X}^{\times})\otimes\Ql\to\Gamma(X,\cD_{X})\otimes\Ql\bigr),
\]
where (\ref{para:tensor-Ql-sheaves}) identifies \(\Gamma(X,-\otimes\Ql)\) with \(\Gamma(X,-)\otimes\Ql\)
on the noetherian \(X\). Since \(\cM_X^\times\) is flasque, the untensored long exact sequence gives
\[
  \coker\bigl(\Gamma(X,\cM_{X}^{\times})\to\Gamma(X,\cD_{X})\bigr)
  \cong \rH^{1}_{\Zar}(X,\cO_{X}^{\times})=\Pic(X).
\]
Flatness of \(\Ql\) preserves cokernels, yielding
\(\rH^{1}_{\Zar}(X,\cO_{X}^{\times}\otimes\Ql)=\Pic(X)\otimes\Ql\).

Suppose now that \(X\) is regular. Each connected component of \(X\) is then a regular integral scheme,
and the higher-vanishing statement is local on the connected components, so we may assume \(X\) integral.
Weil and Cartier divisors on \(X\) then agree, so \(\cD_{X}=\Div_{X}=\bigoplus_{x\in
X^{(1)}}(i_{x})_{*}\bbZ\), where \(i_{x}:\Spec\kappa(x)\to X\). The sheaf \(\Div_{X}\) is flasque, and so
is \(\Div_{X}\otimes\Ql\) by (\ref{para:tensor-Ql-sheaves}). The tensored short exact sequence is then a
flasque resolution of \(\cO_{X}^{\times}\otimes\Ql\) of length two, so
\(\rH^{q}_{\Zar}(X,\cO_{X}^{\times}\otimes\Ql)=0\) for \(q\geq 2\).
\end{proof}

\begin{para}[Pro-\'{e}tale formalism]\label{para:proet}
We work in the pro-\'{e}tale site \cite{BS15} in order to treat \(\Ql(1)\) as a sheaf before pushing it
to the Zariski site. For a scheme \(X/k\) we denote by \(\lambda_{X}: X_\proet \to X_\Zar\) (resp.\
\(\lambda_{\ol{X}}:\ol{X}_{\proet} \to \ol{X}_{\Zar}\)) the canonical morphism of
sites\footnote{We will drop the subscripts from this and other notations when the context is clear.}. On
\(X_\proet\) and \(\ol{X}_{\proet}\) we have the sheaves \(\mu_{\ell^{n}}\) for each \(n \geq 1\).
Following \cite[Remark~5.2.4]{BS15}, we set \(\widehat{\bbZ}_{\ell}(1):=\varprojlim_{n}\mu_{\ell^{n}}\),
the inverse limit formed in the category of pro-\'{e}tale sheaves. The hat records that this sheaf is a
genuine inverse limit and not the sheafification of a constant presheaf, formation of constant sheaves
being incompatible with inverse limits \cite[Example~4.2.10]{BS15}. We set
\(\bbQ_{\ell}(1):=\widehat{\bbZ}_{\ell}(1)[\ell^{-1}]\). Finally, we have Zariski sheaves
\[
\cH^{q}_{X}(\bbQ_{\ell}(1)):=R^{q}\lambda_{*}\bbQ_{\ell}(1)
\qquad\text{and}\qquad
\cH^{q}_{\ol{X}}(\bbQ_{\ell}(1)):=R^{q}\lambda_{*}\bbQ_{\ell}(1).
\]
When we work over a finite or an algebraically closed field, the pro-\'{e}tale cohomology of
\(\bbQ_{\ell}(1)\) (and Jannsen's continuous \'{e}tale cohomology) agree with the usual \'{e}tale
cohomology with \(\bbQ_{\ell}(1)\)-coefficients \cite[(0.2) and Remark~3.5(c)]{Jannsen88},
\cite[Proposition 5.6.2]{BS15}.
In particular, \(\cH^{q}_{X}(\bbQ_{\ell}(1))\) (resp.
\(\cH^{q}_{\ol{X}}(\bbQ_{\ell}(1))\)) are sheaves associated to the presheaf
\(U \mapsto \rH^{q}_{\et}(U,\bbQ_{\ell}(1))\) (resp.
\(V \mapsto \rH^{q}_{\et}(V,\bbQ_{\ell}(1))\)) on \(X_{\Zar}\) (resp. \(\ol{X}_{\Zar}\)).
\end{para}

\begin{lem}\label{lem:h1-proet-presheaf-is-sheaf}
Assume \(K\) is perfect, and let \(X\) be a finite disjoint union of varieties over \(K\). Writing
\(\cH^1_X(\Ql(1))=R^1\lambda_{X,*}\Ql(1)\), for every open \(U\subset X\) the Leray edge map for
\(\lambda_U:U_{\proet}\to U_{\Zar}\) gives a functorial isomorphism
\[
  \rH^{1}_{\proet}(U,\Ql(1))
  \xrightarrow{\sim}
  \Gamma(U,\cH^{1}_{X}(\Ql(1))).
\]
In particular, the presheaf \(U\longmapsto \rH^{1}_{\proet}(U,\Ql(1))\) on \(X_{\Zar}\) is already a
Zariski sheaf.
\end{lem}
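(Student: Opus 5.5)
The plan is to use the Leray spectral sequence for \(\lambda_U\colon U_{\proet}\to U_{\Zar}\),
\[
  E_2^{a,b}=\rH^a_{\Zar}(U,\cH^b_U(\Ql(1)))\Longrightarrow \rH^{a+b}_{\proet}(U,\Ql(1)).
\]
Its low-degree exact sequence reads
\[
  0\to \rH^1_{\Zar}(U,\cH^0)\to \rH^1_{\proet}(U,\Ql(1))\to \Gamma(U,\cH^1)\to \rH^2_{\Zar}(U,\cH^0)\to\cdots.
\]
The restriction of \(\cH^1_X(\Ql(1))\) to \(U_{\Zar}\) is \(\cH^1_U(\Ql(1))\), since \(\lambda\) is compatible with restriction to opens. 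So the claimed isomorphism is exactly the statement that the two outer terms \(\rH^1_{\Zar}(U,\cH^0)\) and \(\rH^2_{\Zar}(U,\cH^0)\) vanish. Functoriality in \(U\) is automatic because the edge maps are natural.

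The key step is to identify \(\cH^0=\lambda_*\Ql(1)\). The open \(U\) is a finite disjoint union of opens in varieties, each of which is irreducible. For any nonempty open \(V\) of an irreducible component, \(V\) is connected, and I claim
\[
  \rH^0_{\proet}(V,\Ql(1))=\rH^0_{\et}(V,\Ql(1))=\bigl(\varprojlim_n\mu_{\ell^n}(\bar K)\bigr)^{\Gal}\otimes\Ql .
\]
This holds because \(\mu_{\ell^n}\) is finite étale over \(K\), so its sections over a connected \(V\) are those over the algebraic closure of \(K\) in \(\Gamma(V,\cO_V)\). Geometric irreducibility means \(K\) is algebraically closed in the function field, so these sections equal \(\mu_{\ell^n}(K)\), independently of \(V\). (Perfectness of \(K\) is used here to pass between geometric irreducibility and the separable closure statement, and in quoting the comparison of pro-étale and continuous cohomology in degree \(0\).) Taking the inverse limit, which commutes with \(\rH^0\), and then inverting \(\ell\) shows that the presheaf \(V\mapsto \rH^0(V,\Ql(1))\) is constant on each component. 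Hence \(\cH^0\) is the constant sheaf with value \(A=T_\ell\mu(K)\otimes\Ql\) on each connected component of \(X\). In fact \(A\) is either \(0\) or \(\Ql\), depending on whether \(K\) contains all \(\ell\)-power roots of unity.

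Next, a constant sheaf on an irreducible topological space is flasque, since restriction maps are identities between nonempty opens. Each component of \(U\) is irreducible, so \(\rH^a_{\Zar}(U,\cH^0)=0\) for \(a\ge1\). The edge map is therefore an isomorphism. Because the identification holds for every open \(U\), the presheaf \(U\mapsto \rH^1_{\proet}(U,\Ql(1))\) agrees with the sheaf \(\cH^1_X\) on sections, which gives the final claim.

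The main obstacle is the degree-zero computation in pro-étale cohomology with \(\Ql(1)=\widehat{\bbZ}_\ell(1)[\ell^{-1}]\), specifically the fact that \(\Gamma\) commutes with the inverse limit and with inverting \(\ell\). For the inverse limit, \(\Gamma\) is a right adjoint and so commutes with limits. Inverting \(\ell\) is a filtered colimit, and I would handle it using that \(V\) is qcqs in the pro-étale site, citing \cite{BS15}. If that proves delicate, the fallback is to compare with Jannsen's continuous cohomology, where \(\rH^0\) is simply \(\varprojlim\rH^0(V,\mu_{\ell^n})\otimes\Ql\).
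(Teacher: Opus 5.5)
Your proposal is correct and follows essentially the same route as the paper: show that \(R^0\lambda_*\Ql(1)\) is constant on each irreducible open, hence flasque, so that \(E_2^{1,0}=E_2^{2,0}=0\). The five-term sequence of the Leray spectral sequence then makes the edge map \(\rH^1_{\proet}(U,\Ql(1))\to\Gamma(U,\cH^1)\) an isomorphism, and your extra detail on the degree-zero computation (limits, inverting \(\ell\), \(K\) algebraically closed in the function field) just fills in the paper's one-line Galois-invariants step.
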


\begin{proof}
The assertion is componentwise, so we may assume \(X\) is a variety. Let \(U\) be a nonempty open subset
of \(X\). For every nonempty open \(V\subset U\), since \(V\) is geometrically connected over \(K\), we
have
\[
  \rH^{0}_{\proet}(V,\Ql(1))=\Bigl(\bigl(\varprojlim_{n}\mu_{\ell^{n}}(\ol{K})\bigr)\otimes_{\bbZ_{\ell}}\Ql\Bigr)^{G_{K}},
\]
the \(G_{K}\)-invariants of a one-dimensional representation, where \(G_{K}\) is the absolute Galois
group of \(K\). The value, either zero or one-dimensional, is independent of \(V\). Thus
\(R^{0}\lambda_{U,*}\Ql(1)\) is a constant sheaf on the irreducible space \(U_{\Zar}\), and is flasque. Every nonempty
open of \(U_{\Zar}\) is irreducible, so every restriction map between nonempty opens is the identity.

The Leray spectral sequence
\[
  E_{2}^{a,b}=\rH^{a}_{\Zar}(U,R^{b}\lambda_{U,*}\Ql(1)) \Longrightarrow \rH^{a+b}_{\proet}(U,\Ql(1))
\]
therefore has \(E_{2}^{1,0}=E_{2}^{2,0}=0\). The five-term exact sequence gives
\[
  \rH^{1}_{\proet}(U,\Ql(1))
  \cong
  \rH^{0}_{\Zar}(U,R^{1}\lambda_{U,*}\Ql(1))
  =
  \Gamma(U,\cH^{1}_{X}(\Ql(1))).
\]
These isomorphisms are functorial in \(U\), so the presheaf is identified with the Zariski sheaf
\(\cH^{1}_{X}(\Ql(1))\).
\end{proof}

\begin{para}[Galois action]\label{para:galois}
Let \(X/k\) be a scheme. The Galois group \(\rG_{k}:=\Gal(\ol{k}/k)\) acts on \(\ol{X}\) via its action
on \(\Spec(\ol{k})\), and this action extends to actions on the sites \(\ol{X}_{\proet}\) and
\(\ol{X}_{\Zar}\), thereby inducing an action on \(\cH^{q}_{\ol{X}}(\bbQ_{\ell}(1))\). In particular
\(\rG_{k}\) acts on \(\rH^{p}(\ol{X},\cH^{q}_{\ol{X}}(\bbQ_{\ell}(1)))\), and we denote by \(F_{k}\) the
corresponding action of the geometric Frobenius. The same considerations apply to the Leray spectral
sequence
\begin{equation}\label{eq:base-leray-SS}
  E_{2}^{p,q}=\rH^{p}_{\Zar}(\ol{X},\cH^{q}_{\ol{X}}(\bbQ_{\ell}(1)))\Longrightarrow
  \rH^{p+q}_{\proet}(\ol{X},\bbQ_{\ell}(1))
\end{equation}
for \(\lambda_{\ol{X}}\). In particular the differentials, edge maps, and the abutment filtration are
\(F_{k}\)-equivariant.
\end{para}

\begin{defn}[Frobenius module]\label{defn:Frobenius-module}
A Frobenius module is a finite-dimensional \(\bbQ_{\ell}\)-vector space \(V\) together with a
\(\bbQ_{\ell}\)-linear automorphism \(F\) all of whose eigenvalues are \(q\)-Weil numbers\footnote{A
\(q\)-Weil number is an algebraic number \(\alpha\) such that for every complex embedding of
\(\overline{\bbQ}\), the absolute value of the image of \(\alpha\) equals \(q^{w(\alpha)/2}\) for some
integer \(w(\alpha)\) independent of the chosen embedding. We call \(w(\alpha)\) the \emph{weight} of
\(\alpha\).}. A morphism of Frobenius modules is a \(\bbQ_{\ell}\)-linear map which respects the
\(F\)-action.
\end{defn}

Call \(V\) \textit{pure} of weight \(w\) if all eigenvalues are \(q\)-Weil numbers of weight
\(w\). The zero module is pure of every weight. Write \(V^{w=w_{0}}\) for the largest \(F\)-stable subspace
all of whose eigenvalues have weight \(w_{0}\).

\begin{lem}\label{lem:Frobenius-module}
Let \(V\) be a Frobenius module. Then
\begin{enumerate}[label=\textup{(\roman*)}]
\item \textup{(Weight decomposition)} \(V\) is canonically a direct sum of its pure
      submodules\footnote{The pure submodules need not be semisimple.}.
      \item \textup{(Subquotient)}
      Every subquotient (as a Frobenius module) of a pure module of weight \(w\) is pure of weight \(w\).
\item \textup{(Vanishing)} There are no non-zero maps between pure Frobenius modules of different
      weights.
\item \textup{(Exactness of weight-grading)} The functor \(V \mapsto V^{w=w_0}\) is exact.
\end{enumerate}

\end{lem}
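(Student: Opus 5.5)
The plan is to reduce everything to linear algebra of a single endomorphism via generalized eigenspaces. Since \(V\) is a finite-dimensional \(\Ql\)-vector space with a \(\Ql\)-linear automorphism \(F\), I would pass to the splitting field: choose a finite extension \(E/\Ql\) containing all eigenvalues of \(F\). Over \(E\) the space \(V_E=V\otimes E\) decomposes as the direct sum of generalized eigenspaces \(V_E(\alpha)=\ker(F-\alpha)^{\dim V}\). Each eigenvalue \(\alpha\) is a \(q\)-Weil number with a well-defined weight \(w(\alpha)\). I will first fix an embedding of \(\ol{\bbQ}\) into \(\ol{\bbQ}_\ell\), which is harmless because the weight is independent of the complex embedding and Galois conjugate algebraic numbers have the same weight.

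For (i), I will group the eigenvalues by weight and set \(P_w(T)=\prod_{w(\alpha)=w}(T-\alpha)^{m_\alpha}\), where \(m_\alpha\) is the algebraic multiplicity. Because \(\mathrm{Gal}(\ol{\bbQ}_\ell/\Ql)\) permutes the roots of the characteristic polynomial and preserves weights, each \(P_w\) has coefficients in \(\Ql\). The factors \(P_w\) are pairwise coprime and their product is the characteristic polynomial. Cayley--Hamilton and the Chinese remainder theorem then give \(V=\bigoplus_w\ker P_w(F)\). This decomposition is canonical, since each summand is cut out by a polynomial in \(F\). Each summand is \(F\)-stable and pure of weight \(w\), and it is the largest such subspace, because any \(F\)-stable subspace pure of weight \(w\) is killed by a power of \(P_w(F)\). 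So \(\ker P_w(F)=V^{w=w}\).

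For (ii), I will use that the characteristic polynomial of \(F\) on a subquotient divides that on \(V\). Its roots are therefore among the eigenvalues of \(V\), all of weight \(w\). For (iii), let \(f\colon V\to W\) be \(F\)-equivariant with \(V\) pure of weight \(w\) and \(W\) pure of weight \(w'\ne w\). The image \(f(V)\) is a quotient of \(V\) and a submodule of \(W\), so by (ii) it is pure of both weights. Its eigenvalues would then have two different weights, so it has no eigenvalues, forcing \(f(V)=0\). (Note that \(F\) restricted to a subquotient is still an automorphism, since it is injective on a finite-dimensional space, so subquotients are again Frobenius modules.)

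For (iv), by (i) and (iii) any morphism \(V\to W\) is the direct sum of its weight components \(V^{w=w_0}\to W^{w=w_0}\). A short exact sequence is thus the direct sum over \(w_0\) of the sequences of weight-\(w_0\) parts. A direct sum of complexes is exact iff each summand is, so each summand is exact.

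The only point needing care, and the main obstacle, is the rationality of \(P_w\). This amounts to checking that the weight is a Galois-invariant notion on \(\ol{\bbQ}\) under the fixed identification with \(\ol{\bbQ}_\ell\). That follows directly from the footnote's definition, since the condition is imposed for every complex embedding.
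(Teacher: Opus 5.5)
Your proof is correct and follows essentially the same route as the paper: primary decomposition of \(V\) by grouping the roots of the characteristic polynomial by weight, eigenvalue containment for subquotients, the image-as-common-subquotient argument for (iii), and splitting a short exact sequence into its weight components for (iv). Your explicit check that each \(P_w\) has \(\Ql\)-coefficients, because Galois conjugates of a Weil number share its weight, makes precise a point the paper leaves implicit when it groups primary factors by weight.
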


\begin{proof}
Write \(F\) for the Frobenius automorphism of \(V\). It is pure of weight \(w\) when every eigenvalue
of \(F\) in \(\ol{\Ql}\) is a Weil \(q\)-number of weight \(w\) (absolute value \(q^{w/2}\) under every
complex embedding). Let \(P_{a}\in\Ql[t]\) be the product of the primary factors of the characteristic
polynomial whose roots have weight \(a\). The \(P_{a}\) are pairwise coprime, and primary decomposition
gives
\[
  V=\bigoplus_{a}\ker P_{a}(F).
\]
This is the decomposition in (i), and it is functorial in \(V\). A sub- or quotient module has
\(F\)-eigenvalues among those of \(V\), giving (ii). A morphism of Frobenius modules has image a subquotient
of both source and target, so a morphism between pure modules of distinct weights has image pure of two
weights, hence \(0\). This is (iii). Finally a morphism of Frobenius modules respects the decomposition
(i), so \(V\mapsto V^{w=w_{0}}\) carries a short exact sequence to the direct-summand sequence of
weight-\(w_{0}\) parts, which is exact. This is (iv).
\end{proof}

\begin{lem}\label{lem:Zar-Ql-weight}
Let \(T\) be a proper finite-type \(k\)-scheme. Then \(\rH^{p}_{\Zar}(\ol{T},\Ql(1))\) is a
Frobenius module pure of weight \(-2\) for every \(p\geq 0\).
\end{lem}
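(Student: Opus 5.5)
The plan is to identify \(\rH^{p}_{\Zar}(\ol T,\Ql(1))\) with the cohomology of a constant sheaf on a noetherian topological space. Here \(\Ql(1)\) denotes the Zariski sheaf \(R^0\lambda_*\Ql(1)\), i.e.\ the sheafification of \(V\mapsto \rH^0_{\et}(V,\Ql(1))\). Over \(\ol k\) every nonempty connected open \(V\subset\ol T\) has \(\rH^0_{\et}(V,\Ql(1))=\Ql(1):=(\varprojlim_n\mu_{\ell^n}(\ol k))\otimes\Ql\), a one-dimensional space. So the Zariski sheaf is the constant sheaf \(\underline{\Ql(1)}\cong \underline{\Ql}\otimes_{\Ql}\Ql(1)\), where the last factor is a fixed \(\Ql\)-line carrying the Galois action. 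The first step is therefore
\[
  \rH^p_{\Zar}(\ol T,\Ql(1))\cong \rH^p_{\Zar}(\ol T,\underline{\Ql})\otimes_{\Ql}\Ql(1).
\]
This isomorphism should be \(\rG_k\)-equivariant, with \(\rG_k\) acting on the first factor through its action on the topological space \(\ol T\) and on the second through the cyclotomic character. Geometric Frobenius acts on \(\Ql(1)\) by multiplication by \(q^{-1}\), a \(q\)-Weil number of weight \(-2\). Hence it suffices to show that \(\rH^p_{\Zar}(\ol T,\underline{\Ql})\) is a finite-dimensional Frobenius module pure of weight \(0\); indeed I expect all eigenvalues to be roots of unity.

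For the constant-coefficient part, I would compute Zariski cohomology of the constant sheaf via the \v{C}ech complex of the cover of \(\ol T\) by its irreducible components. Since \(\ol T\) is noetherian and every open subset of an irreducible space is connected, the constant sheaf \(\underline{\Ql}\) restricted to an irreducible closed subset is flasque. Let \(Z_1,\dots,Z_r\) be the irreducible components of \(\ol T\), and write \(Z_J=\bigcap_{j\in J}Z_j\). Then
\[
  0\to\underline{\Ql}\to\bigoplus_i (i_{Z_i})_*\underline{\Ql}\to\bigoplus_{|J|=2}(i_{Z_J})_*\underline{\Ql}\to\cdots
\]
is not directly flasque, because the \(Z_J\) may be reducible. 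To handle this I would use the stratification of intersections, or more cleanly induct on \(\dim\ol T\) using the Mayer--Vietoris sequence for the closed cover \(\ol T=Z_1\cup(Z_2\cup\dots\cup Z_r)\). On irreducible pieces, cohomology of the constant sheaf vanishes in positive degree and equals \(\Ql\) in degree \(0\). The induction should show that each \(\rH^p_{\Zar}(\ol T,\underline{\Ql})\) is finite-dimensional and built from \(\Ql^{\pi_0}\)-type pieces of finitely many closed subschemes defined over a finite extension \(k'\) of \(k\).

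On each such piece Frobenius acts by permuting a finite set of components, so a power of \(F\) acts trivially. Finite-dimensionality then holds, and all eigenvalues are roots of unity, hence of weight \(0\). Lemma~\ref{lem:Frobenius-module}(ii),(iv) then passes purity through the Mayer--Vietoris long exact sequences: a module sitting in an exact sequence between weight-\(0\) modules is weight \(0\).

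The main obstacle is making the Galois equivariance and the inductive argument precise. The Mayer--Vietoris pieces must be \(F\)-stable. I would arrange this by taking the whole \(\rG_k\)-orbit of a component, or more simply by replacing \(k\) with a finite extension \(k'\) over which all components and their intersections are defined. This replacement is harmless because purity for \(F^{[k':k]}\) implies purity for \(F\). Finally, tensoring with the weight \(-2\) line \(\Ql(1)\) gives the claim.
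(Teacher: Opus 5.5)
Your proposal is correct and is essentially the paper's argument: replace \(F\) by a power so that every irreducible component of \(\ol T\) is \(F\)-stable, then induct via the closed Mayer--Vietoris sequence for \(\ol T=Z_1\cup(Z_2\cup\dots\cup Z_r)\). Irreducible pieces contribute only a degree-zero line, Lemma~\ref{lem:Frobenius-module} propagates purity through the long exact sequence, and splitting off the Tate twist first is only cosmetic. The one point to tighten is that induction on \(\dim\ol T\) alone does not reach \(Z_2\cup\dots\cup Z_r\), which can have the same dimension as \(\ol T\), so you should induct lexicographically on \((\dim\ol T,\ \#\{\text{irreducible components}\})\), exactly as the paper does.
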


\begin{proof}
The constant Zariski sheaf \(\Ql(1)\) and its cohomology depend only on the underlying topological space
of \(\ol T\), so nilpotents play no role and we may pass to \(\ol T_{\mathrm{red}}\). After replacing the
geometric Frobenius \(F\) by a positive power, which does not change weights, we may assume that every
irreducible component of \(\ol T\) is \(F\)-stable. We argue by lexicographic induction on
\((\dim\ol T,\#\{\text{irreducible components of }\ol T\})\).

If \(\ol T\) is irreducible, then the constant Zariski sheaf \(\Ql(1)\) is flasque on the irreducible
noetherian space \(\ol T\). Hence the cohomology is \(\Ql(1)\) in degree \(0\) and \(0\) in higher degrees.
It is pure of weight \(-2\).

Otherwise write \(\ol T=\ol T'\cup \ol T_n\), where \(\ol T_n\) is one irreducible component and
\(\ol T'\) is the union of the remaining components. Put \(\ol S=\ol T'\cap\ol T_n\), and let
\(i'\colon\ol T'\hookrightarrow\ol T\), \(i_n\colon\ol T_n\hookrightarrow\ol T\), and
\(j\colon\ol S\hookrightarrow\ol T\) be the closed immersions. There is a short exact sequence
\[
  0\to\Ql(1)_{\ol T}\to
  i'_*\Ql(1)_{\ol T'}\oplus i_{n,*}\Ql(1)_{\ol T_n}
  \to j_*\Ql(1)_{\ol S}\to0,
\]
with the difference map on \(\ol S\). Exactness is checked on stalks. Away from \(\ol S\) it is the
identity sequence, and on \(\ol S\) it is
\[
  0\to\Ql(1)\xrightarrow{\mathrm{diag}}\Ql(1)^2
  \xrightarrow{\mathrm{diff}}\Ql(1)\to0.
\]
The associated long exact cohomology sequence filters \(\rH^p_{\Zar}(\ol T,\Ql(1))\) by subquotients of
\(\rH^{p-1}_{\Zar}(\ol S,\Ql(1))\) and
\(\rH^p_{\Zar}(\ol T',\Ql(1))\oplus\rH^p_{\Zar}(\ol T_n,\Ql(1))\). These groups are pure of weight \(-2\)
by induction. Lemma~\ref{lem:Frobenius-module} gives the same conclusion for
\(\rH^p_{\Zar}(\ol T,\Ql(1))\).
\end{proof}

\begin{lem}\label{lem:torsion-ag}
Let \(G\) be a commutative algebraic group of finite type over \(\ol{k}=\overline{\bbF_{q}}\). Then
\(G(\ol{k})\otimes_{\bbZ}\bbQ_{\ell}=0\).
\end{lem}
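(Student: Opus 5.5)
The plan is to show that the abelian group \(G(\ol k)\) is torsion. Since \(\Ql\) is a \(\bbQ\)-vector space, \(A\otimes_{\bbZ}\Ql=0\) for every torsion abelian group \(A\). Indeed, for \(a\in A\) with \(na=0\) and \(\lambda\in\Ql\), we have \(a\otimes\lambda=na\otimes(\lambda/n)=0\). So the whole statement reduces to the torsion claim.

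To prove \(G(\ol k)\) is torsion, I would use finite-type descent. \(G\) is of finite type over \(\ol k=\bigcup_{n}\bbF_{q^{n}}\), so the finitely many equations defining \(G\) together with its group law involve only finitely many coefficients. Hence there is a finite subfield \(\bbF_{q^{m}}\subset\ol k\) and a commutative group scheme \(G_0\) of finite type over \(\bbF_{q^{m}}\) with \(G\cong G_0\times_{\bbF_{q^m}}\ol k\). This is standard spreading out of finitely presented schemes and morphisms along a filtered colimit of rings, applied to the group-law diagrams.

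Next, take a point \(x\in G(\ol k)\). Its coordinates lie in some finite extension \(\bbF_{q^{n}}\supseteq\bbF_{q^{m}}\), so \(x\in G_0(\bbF_{q^{n}})\). Then \(G_0(\bbF_{q^{n}})\) is a finite group, because a scheme of finite type over a finite field has only finitely many rational points: cover it by finitely many affine opens, each of which embeds in an affine space \(\bbA^N\) with only finitely many \(\bbF_{q^n}\)-points. It is also a subgroup of \(G(\ol k)\). Therefore \(x\) has finite order, which proves the torsion claim.

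I do not expect a genuine obstacle. The only step needing care is the descent of \(G\) and its group structure to a finite field. Alternatively, one can avoid descent entirely: the finitely many coordinates of \(x\) and the coefficients defining \(G\) generate a finite field \(\bbF\), and the closed subgroup generated by \(x\) consists of points defined over \(\bbF\). I would present the spreading-out argument as the cleanest route.
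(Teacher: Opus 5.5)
Your proposal is correct and follows essentially the same route as the paper: descend \(G\) to a finite-type model over a finite field, write \(G(\ol k)\) as the union of the finite groups of \(\bbF_{q^n}\)-points, and conclude that \(G(\ol k)\) is torsion, hence killed by \(\otimes_{\bbZ}\Ql\). Your write-up just spells out the finiteness of rational points and the vanishing of torsion groups after \(\otimes\Ql\), which the paper takes as evident.
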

\begin{proof}
Choose a finite type model \(G_{0}/\bbF_{q^{n_{0}}}\) with \(G=G_{0}\times_{\bbF_{q^{n_{0}}}}\ol{k}\).
Then \(G(\ol{k})=\varinjlim_{n\geq n_{0}} G_{0}(\bbF_{q^{n}})\), and each \(G_{0}(\bbF_{q^{n}})\) is a
finite abelian group. Hence \(G(\ol{k})\) is a torsion \(\bbZ\)-module, and
\(G(\ol{k})\otimes_{\bbZ}\Ql=0\).
\end{proof}

\begin{para}[N\'eron-Severi group]\label{para:neron-severi}
For a proper scheme \(Y\) over \(\ol{k}\), let \(\Pic_{Y/\ol{k}}\) denote the Picard scheme
\cite[Exp.~XII]{SGA6} (see also \cite{Murre64} for an account over a field). This is a commutative
algebraic group locally of finite type over \(\ol{k}\), and its identity component
\(\Pic^{0}_{Y/\ol{k}}\) is connected and of finite type. We write
\[
  \Pic(Y) \;:=\; \Pic_{Y/\ol{k}}(\ol{k})
  \qquad\text{and}\qquad
  \Pic^{0}(Y) \;:=\; \Pic^{0}_{Y/\ol{k}}(\ol{k})
\]
for the abelian groups of \(\ol{k}\)-rational points of the Picard scheme and its identity component. By
representability of the Picard scheme, \(\Pic(Y)\) coincides with \(\rH^{1}_{\Zar}(Y,\cO_{Y}^{\times})\).
The \emph{N\'eron--Severi group} of \(Y\) is
\[
\NS(Y) \;:=\; \Pic(Y)\,\big/\,\Pic^{0}(Y),
\]
which by \cite[Exp.~XIII, Th\'{e}or\`{e}me~5.1]{SGA6} is finitely generated. By
Lemma~\ref{lem:torsion-ag} we have \(\Pic^{0}(Y)\otimes\Ql = 0\), and hence

\begin{equation}\label{eq:neron-severi-1}
\Pic(Y) \otimes \Ql \;=\; \NS(Y) \otimes \Ql.
\end{equation}

Now fix a proper scheme \(X\) over \(k\) and apply the above to \(Y = \ol{X}\). The geometric Frobenius
\(F\) acts on \(\NS(\ol{X})\) by pullback of line bundles. Each of the finitely many generators of
\(\NS(\ol{X})\) is represented by a line bundle defined over some finite extension \(\bbF_{q^{n_i}}\),
hence is \(F^{n_i}\)-invariant. Taking \(N = \mathrm{lcm}_i(n_i)\), we have \(F^N = \mathrm{id}\) on
\(\NS(\ol{X})\). Consequently the eigenvalues of \(F\) on \(\NS(\ol{X})\otimes\Ql\) are \(N\)-th roots of
unity, so \(\NS(\ol{X})\otimes\Ql\) is pure of weight~\(0\) as a Frobenius module.
\end{para}

\section{Main Result}
\label{sec:main-result}

For any scheme \(T\), recall the Zariski sheaf \(\cH^{1}=\cH^{1}_{T}(\bbQ_{\ell}(1))\) of
(\ref{para:proet}). We consider the Zariski cohomology group
\begin{equation}\label{eq:Zariski-local-cohomology}
\rH^{1}_{\Zar}(T,\cH^{1}).
\end{equation}
The next lemma identifies it with a graded piece of the pro-étale Leray filtration.

\begin{lem}\label{lem:leray-edge}
Assume \(K\) is finite or algebraically closed, and let \(Y\) be a finite disjoint union of varieties over \(K\). The Leray edge
for \(\lambda_{Y}:Y_{\proet}\to Y_{\Zar}\) gives an injection
\[
  \iota_{Y}:\;\rH^1_{\Zar}(Y, \cH^1)\;\lhook\joinrel\longrightarrow\;\rH^{2}_{\et}(Y,\Ql(1)).
\]
For \(Y=\ol{X}\) with \(X\) a finite disjoint union of varieties over \(k\), the injection
\(\iota_{\ol{X}}\) is \(G_{k}\)-equivariant.
\end{lem}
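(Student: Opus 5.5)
The plan is to read \(\iota_Y\) off the Leray spectral sequence for \(\lambda_Y\),
\[
  E_2^{a,b}=\rH^a_{\Zar}(Y,\cH^b_Y(\Ql(1)))\Longrightarrow \rH^{a+b}_{\proet}(Y,\Ql(1)),
\]
and to identify the abutment with \(\rH^{2}_{\et}(Y,\Ql(1))\) using (\ref{para:proet}), which applies because \(K\) is finite or algebraically closed. The bottom row vanishes in positive degrees. Indeed, the proof of Lemma~\ref{lem:h1-proet-presheaf-is-sheaf} shows that on each component \(Y_i\) the sheaf \(R^0\lambda_*\Ql(1)\) is constant, hence flasque, on the irreducible space \(Y_{i,\Zar}\). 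A finite disjoint union of flasque sheaves is flasque. Hence \(E_2^{a,0}=0\) for \(a\ge 1\), and in particular \(E_2^{2,0}=E_2^{3,0}=0\).

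Next I would trace the position \((1,1)\) through the spectral sequence. The incoming differential \(d_2\colon E_2^{-1,2}\to E_2^{1,1}\) vanishes because \(E_2^{-1,2}=0\). The outgoing \(d_2\colon E_2^{1,1}\to E_2^{3,0}=0\) is zero, and \(d_r\) for \(r\ge3\) leaves the first quadrant from \((1,1)\). So \(E_\infty^{1,1}=E_2^{1,1}=\rH^1_{\Zar}(Y,\cH^1)\).

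The abutment filtration on \(\rH^2\) has \(F^2\rH^2=E_\infty^{2,0}=0\), so \(F^1\rH^2=E_\infty^{1,1}\). This gives the injection \(E_2^{1,1}=F^1\rH^2\hookrightarrow \rH^2_{\proet}(Y,\Ql(1))=\rH^2_{\et}(Y,\Ql(1))\), which defines \(\iota_Y\). One could equivalently phrase this with the truncation triangle \(\tau_{\le 0}R\lambda_*\to R\lambda_*\to\tau_{\ge1}R\lambda_*\) and use the vanishing of \(\rH^{\ge1}(Y,R^0\lambda_*)\).

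For equivariance, I would invoke (\ref{para:galois}). The \(G_k\)-action on \(\ol X_\proet\) and \(\ol X_\Zar\) is compatible with \(\lambda_{\ol X}\), so it acts on the whole Leray spectral sequence, and the differentials, the abutment filtration and the edge identifications are all equivariant. The inclusion \(F^1\rH^2\subseteq\rH^2\) is therefore equivariant. The comparison of pro-étale and étale cohomology is also functorial, hence compatible with the action. I expect the only slightly delicate point to be checking that the vanishing of \(\rH^{\ge1}(R^0\lambda_*)\) holds over a finite \(K\), where \(\rH^0(V,\Ql(1))\) may be zero. But the argument of Lemma~\ref{lem:h1-proet-presheaf-is-sheaf} already covers both cases, because the sheaf is constant in either case.
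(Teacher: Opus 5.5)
Your proposal is correct and follows essentially the same route as the paper's proof. Flasqueness of the componentwise-constant sheaf \(R^0\lambda_{Y,*}\Ql(1)\) gives \(E_2^{a,0}=0\) for \(a\ge1\), hence \(F^2=0\) and \(E_2^{1,1}=E_\infty^{1,1}=F^1\rH^2\subseteq\rH^2\). The \(G_k\)-equivariance then follows from the equivariance of the whole Leray spectral sequence.
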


\begin{proof}
Write \(F^{0}\supseteq F^{1}\supseteq F^{2}\supseteq F^{3}=0\) for the abutment filtration on
\(\rH^{2}_{\et}(Y,\Ql(1))\), so that \(F^{p}/F^{p+1}=E_{\infty}^{p,2-p}\). It suffices to show
\(F^{2}=0\) and \(E_{2}^{1,1}=E_{\infty}^{1,1}\). Then \(F^{1}=E_{\infty}^{1,1}=\rH^1_{\Zar}(Y, \cH^1)\) and
the inclusion \(F^{1}\hookrightarrow\rH^{2}_{\et}(Y,\Ql(1))\) is the asserted \(\iota_{Y}\).

\emph{Vanishing of \(F^{2}\).} By the flasqueness argument in Lemma~\ref{lem:h1-proet-presheaf-is-sheaf},
\(R^{0}\lambda_{Y,*}\Ql(1)\) is locally constant on \(Y_{\Zar}\), hence flasque. Thus \(\rH^{p}_{\Zar}(Y,R^{0}\lambda_{Y,*}\Ql(1))=0\) for \(p\geq
1\). In particular \(E_{2}^{2,0}=0\). Since \(F^{3}=0\), \(F^{2}=E_{\infty}^{2,0}\) is a subquotient of
\(E_{2}^{2,0}\), hence zero.

\emph{\(E_{2}^{1,1}=E_{\infty}^{1,1}\).} The differential \(d_{2}:E_{2}^{1,1}\to E_{2}^{3,0}\) targets
\(\rH^{3}_{\Zar}(Y,R^{0}\lambda_{Y,*}\Ql(1))=0\) by the same flasqueness. For \(r\geq 3\), incoming
differentials \(d_{r}:E_{r}^{1-r,r}\to E_{r}^{1,1}\) have first index \(<0\), and outgoing
\(d_{r}:E_{r}^{1,1}\to E_{r}^{1+r,2-r}\) have second index \(<0\), so source or target vanishes.

The \(G_{k}\)-equivariance for \(Y=\ol{X}\) follows from (\ref{para:galois}). The Leray spectral sequence
and its abutment filtration consist of \(G_{k}\)-modules.
\end{proof}

\begin{prop}\label{prop:ZL-frobenius-comparison}
Let \(X\) be a proper reduced \(k\)-scheme. Then \(\rH^1_{\Zar}(\ol X,\cH^1)\) is a Frobenius module.
Moreover there is a canonical map
\[
  \theta_X:F^1\rH^2_{\et}(\ol X,\Ql(1))\longrightarrow \rH^1_{\Zar}(\ol X, \cH^1)
\]
whose kernel and cokernel are pure of weight \(-2\). Consequently \(\theta_X\) induces an isomorphism
\[
  \bigl(F^1\rH^2_{\et}(\ol X,\Ql(1))\bigr)^{w=0}
  \xrightarrow{\ \sim\ }
  \rH^1_{\Zar}(\ol X, \cH^1)^{w=0}.
\]
If \(\ol X\) is a finite disjoint union of varieties, then \(\theta_X\) is an isomorphism. Equivalently,
\(\rH^1_{\Zar}(\ol X, \cH^1)\) identifies with the subspace \(F^1\rH^2_{\et}(\ol X,\Ql(1))\).
\end{prop}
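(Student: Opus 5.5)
We work with the Leray spectral sequence \eqref{eq:base-leray-SS} for \(\lambda_{\ol X}\). Here \(F^1\rH^2_{\et}(\ol X,\Ql(1))\) denotes the first step of its abutment filtration, and \(E_2^{p,q}=\rH^p_{\Zar}(\ol X,\cH^q)\). By (\ref{para:galois}) all terms, differentials and filtration steps carry compatible Frobenius actions.

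\emph{Frobenius-module structure.} First we show \(\rH^1_{\Zar}(\ol X,\cH^1)\) is finite-dimensional with Weil-number eigenvalues. The terms \(E_2^{p,0}=\rH^p_{\Zar}(\ol X,\cH^0)\) need to be identified: \(\cH^0=R^0\lambda_*\Ql(1)\) has sections \(\rH^0_{\proet}(V,\Ql(1))\), which is locally constant on connected opens over \(\ol k\). I expect \(\cH^0\) to be the constant Zariski sheaf \(\Ql(1)\) on \(\ol X\). This holds because over \(\ol k\) every connected open \(V\) has \(\rH^0=\Ql(1)\), and sections over any open are functions on its connected components. Hence Lemma~\ref{lem:Zar-Ql-weight} makes each \(E_2^{p,0}\) a Frobenius module pure of weight \(-2\).

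\emph{The map \(\theta_X\).} Since \(F^3=0\), we have \(F^2=E_\infty^{2,0}\) and \(F^1/F^2=E_\infty^{1,1}\). Moreover \(E_\infty^{1,1}=\ker\bigl(d_2\colon E_2^{1,1}\to E_2^{3,0}\bigr)\), since incoming differentials to \((1,1)\) come from negative columns and \(d_r\) for \(r\ge 3\) leaves the first quadrant. Define \(\theta_X\) as the composite
\[
F^1\twoheadrightarrow F^1/F^2=E_\infty^{1,1}\hookrightarrow E_2^{1,1}=\rH^1_{\Zar}(\ol X,\cH^1).
\]
Its kernel \(F^2=E_\infty^{2,0}\) is a quotient of \(E_2^{2,0}\), namely \(E_2^{2,0}\) modulo the image of \(d_2\) from \(E_2^{0,1}\). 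Its cokernel injects into \(E_2^{3,0}\) via \(d_2\). By Lemma~\ref{lem:Frobenius-module}(ii), both kernel and cokernel are pure of weight \(-2\) once the objects involved are Frobenius modules.

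Finite-dimensionality of \(E_2^{1,1}\) then follows from the exact sequence
\[
0\to\ker\theta_X\to F^1\to E_2^{1,1}\to\operatorname{coker}\theta_X\to0.
\]
Here \(F^1\subseteq\rH^2_{\et}(\ol X,\Ql(1))\) is finite-dimensional and Frobenius-stable, and its eigenvalues are Weil numbers by Deligne's Weil~II applied to the proper \(X\). The cokernel sits in the finite-dimensional \(E_2^{3,0}\). An extension of Frobenius modules is again one, since its eigenvalues are those of the sub and the quotient. The main technical point I expect here is making precise that \(\cH^0\) is the constant sheaf for an arbitrary reduced proper \(\ol X\), including non-irreducible and disconnected cases. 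I would handle this by checking on stalks and on connected opens.

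\emph{Conclusion.} Applying the exact functor \((-)^{w=0}\) of Lemma~\ref{lem:Frobenius-module}(iv) to the four-term sequence kills the kernel and cokernel. This gives the isomorphism on weight-zero parts. When \(\ol X\) is a finite disjoint union of varieties, Lemma~\ref{lem:leray-edge} (with \(K=\ol k\)) shows \(E_2^{2,0}=E_2^{3,0}=0\), so \(\theta_X\) is an isomorphism. Its inverse is the injection \(\iota_{\ol X}\) with image \(F^1\).
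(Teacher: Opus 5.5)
Your proposal is correct and follows essentially the same route as the paper. You use the Leray spectral sequence for \(\lambda_{\ol X}\), identify its bottom row with \(\rH^p_{\Zar}(\ol X,\Ql(1))\) (pure of weight \(-2\) by Lemma~\ref{lem:Zar-Ql-weight}), and define \(\theta_X\) as \(F^1\twoheadrightarrow E_\infty^{1,1}\hookrightarrow E_2^{1,1}\), with kernel \(E_\infty^{2,0}\) and cokernel \(\operatorname{im} d_2\subseteq E_2^{3,0}\); you then conclude by exactness of \((-)^{w=0}\) and by Lemma~\ref{lem:leray-edge} in the disjoint-union case, exactly as the paper does.
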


\begin{proof}
Consider the Leray spectral sequence for
\(\lambda_{\ol X}:\ol X_{\proet}\to\ol X_{\Zar}\):
\[
  E_2^{a,b}=\rH^a_{\Zar}\bigl(\ol X,\cH^b_{\ol X}(\Ql(1))\bigr)
  \Longrightarrow \rH^{a+b}_{\et}(\ol X,\Ql(1)).
\]
It is Frobenius-equivariant by \ref{para:galois}. The sheaf \(R^0\lambda_{\ol X,*}\Ql(1)\) is the constant
Zariski sheaf \(\Ql(1)\). For every Zariski open \(U\subset \ol X\),
\(\rH^0_{\proet}(U,\Ql(1))=\Ql(1)^{\pi_0(U)}\). Hence
\[
  E_2^{a,0}=\rH^a_{\Zar}(\ol X,\Ql(1)),
\]
and Lemma~\ref{lem:Zar-Ql-weight} says that these groups are pure of weight \(-2\).

Let \(F^\bullet\) be the abutment filtration on \(\rH^2_{\et}(\ol X,\Ql(1))\). The abutment is a
Frobenius module (in general mixed) by \cite[Th\'{e}or\`{e}me~1.6]{Del80}. Therefore each graded piece
\(E^{a,2-a}_\infty\) has Weil-number Frobenius eigenvalues. At \((1,1)\) there
are no incoming differentials by indexing. Thus
\[
  E^{1,1}_\infty=\ker\bigl(d_2:E_2^{1,1}\to E_2^{3,0}\bigr),
\]
and \(E_2^{1,1}\) is an extension of \(\operatorname{im}d_2\) by \(E^{1,1}_\infty\). Since
\(\operatorname{im}d_2\) is a subquotient of the pure weight-\((-2)\) group \(E_2^{3,0}\), this proves that
\(E_2^{1,1}=\rH^1_{\Zar}(\ol X, \cH^1)\) is a Frobenius module.

The map \(\theta_X\) is the composite
\[
  F^1\rH^2_{\et}(\ol X,\Ql(1))
  \twoheadrightarrow E^{1,1}_\infty
  \hookrightarrow E_2^{1,1}=\rH^1_{\Zar}(\ol X, \cH^1).
\]
Its kernel is \(F^2\rH^2_{\et}(\ol X,\Ql(1))=E^{2,0}_\infty\), a subquotient of
\(E_2^{2,0}=\rH^2_{\Zar}(\ol X,\Ql(1))\), hence pure of weight \(-2\) by
Lemma~\ref{lem:Zar-Ql-weight}. Its cokernel is \(\operatorname{im}d_2\subset E_2^{3,0}\), again pure of
weight \(-2\). The functor \(V\mapsto V^{w=0}\) is exact (Lemma~\ref{lem:Frobenius-module}), so
\(\theta_X\) becomes an isomorphism on weight-zero parts.

If \(\ol X\) is a finite disjoint union of varieties, Lemma~\ref{lem:leray-edge} gives
\(F^2=0\) and \(d_2=0\). Hence the displayed map \(\theta_X\) is already an isomorphism.
\end{proof}

\begin{para}[The \(\ell\)-adic cycle class map]\label{para:cycle-class}
Let \(K\) be a field of characteristic different from \(\ell\) and let \(X\) be a \(K\)-scheme. For each
\(n\geq1\) the Kummer sequence of \'{e}tale
sheaves on \(X_{\et}\),
\[
  0\longrightarrow \mu_{\ell^{n}}\longrightarrow \bbG_{m}\xrightarrow{\;(-)^{\ell^{n}}\;}\bbG_{m}\longrightarrow 0,
\]
forms an inverse system in \(n\) under the \(\ell\)-th-power transition maps. By
\cite[Lemma~3.26]{Jannsen88} (see also \cite[\S 6.1.1, eqs.~(6.2)--(6.3)]{Schreieder23}), passing to
Jannsen's continuous \'{e}tale cohomology (equivalently pro-\'{e}tale, see~(\ref{para:proet})),
sheafifying on \(X_{\Zar}\), and composing with the canonical map
\(R^{1}\lambda_{X,*}\widehat{\bbZ}_{\ell}(1)\to \cH^{1}_{X}(\Ql(1))\), the connecting maps assemble into
the \emph{\(\ell\)-adic logarithmic differential}\footnote{The notation \(d\log\) follows the Hodge-theoretic analogue in
\cite[\S 3, eq.~(15) and Lemma~4]{BRS09}.}
\[
  d\log_{\ell,X}:\cO_{X}^{\times}\longrightarrow \cH^{1}_{X}(\Ql(1)).
\]
Schreieder \cite[\S 6.1.1, eq.~(6.3) and Lemma~6.7]{Schreieder23} works with the resulting global Chern
class \(c_{1}\colon\Pic(X)\to\rH^{2}_{\et}(X,\bbZ_{\ell}(1))\). Applying \(\rH^{1}_{\Zar}(X,-)\)
and using \(\rH^{1}_{\Zar}(X,\cO_{X}^{\times})=\Pic(X)\) gives a \(\bbZ\)-linear map
\[
  \widetilde{c}_{1}:\Pic(X)\longrightarrow \rH^{1}_{\Zar}(X,\cH^1).
\]
Since the target is a \(\Ql\)-vector space, \(\widetilde{c}_{1}\) factors uniquely through
\(\Pic(X)\otimes\Ql\), defining the \emph{\(\ell\)-adic cycle class map}
\[
  \widetilde{c}_{1}:\Pic(X)\otimes\Ql\;\longrightarrow\;\rH^1_{\Zar}(X, \cH^1).
\]
For \(X=\ol{Y}\) the base change of a proper \(Y/k\), we have \(\Pic^{0}(\ol{Y})\otimes\Ql=0\) by
(\ref{para:neron-severi}). The cycle class then factors through \(\NS(\ol{Y})\otimes\Ql\):
\[
  \widetilde{c}_{1}:\NS(\ol{Y})\otimes\Ql\longrightarrow \rH^1_{\Zar}(\ol{Y}, \cH^1).
\]

\emph{Notation for the Chern class.} We write \(c_{1}\) for the \'{e}tale first Chern class, with target
\(\rH^{2}_{\et}(-,\bbZ_{\ell}(1))\), its rationalisation \(\rH^{2}_{\et}(-,\Ql(1))\), or a subgroup such as
the \(F^{1}\) of (\ref{para:integral-form}), and \(\widetilde{c}_{1}\) for its Zariski-sheafified avatar,
valued in the subquotient \(\rH^1_{\Zar}(-, \cH^1)\). Each is used both for the integral map and for its
\(\Ql\)-linearisation.

\emph{Functoriality and \(G_{k}\)-equivariance.} For any morphism \(f:X\to X'\) of \(K\)-schemes,
pullback of structure sheaves \(f^{-1}\cO_{X'}\to\cO_{X}\) is compatible with multiplication and
\(\ell^{r}\)-th-power maps. It therefore induces a morphism of Kummer inverse systems on \(X_{\et}\).
Passing through the continuous \'{e}tale cohomology functor and sheafifying on the Zariski sites
shows that \(d\log_{\ell,X}\), and hence \(\widetilde{c}_{1}\), are contravariantly functorial in
\(X\), so that for any such \(f\) the square
\[
\begin{array}{ccc}
  \Pic(X')\otimes\Ql & \xrightarrow{\;\widetilde{c}_{1}\;} & \rH^1_{\Zar}(X', \cH^1) \\
  \scriptstyle f^{*}\Big\downarrow & & \Big\downarrow\scriptstyle f^{*} \\
  \Pic(X)\otimes\Ql & \xrightarrow{\;\widetilde{c}_{1}\;} & \rH^1_{\Zar}(X, \cH^1)
\end{array}
\]
commutes. Specialising to \(X=X'=\ol{Y}=Y\times_{k}\ol{k}\) for \(Y/k\), apply this to the
\(k\)-automorphisms \(\sigma_{\ol{Y}}:=\mathrm{id}_{Y}\times\sigma:\ol{Y}\to\ol{Y}\) for \(\sigma\in
G_{k}=\Gal(\ol{k}/k)\). The two resulting \(G_{k}\)-actions, on \(\Pic(\ol{Y})\otimes\Ql\) (by pullback
of line bundles) and on \(\rH^1_{\Zar}(\ol{Y}, \cH^1)\) (from (\ref{para:galois})), make
\(\widetilde{c}_{1}\) a morphism of \(G_{k}\)-modules.
\end{para}

\begin{thm}\label{thm:main-result}
Let \(X\) be a proper reduced scheme over \(k=\bbF_{q}\). Then the \(\ell\)-adic cycle class map
\[
\widetilde{c}_{1}: \NS(\ol{X}) \otimes \bbQ_{\ell} \to \rH^1_{\Zar}(\ol{X}, \cH^1)^{w=0}
\]
is a \(\rG_{k}\)-equivariant isomorphism.

\end{thm}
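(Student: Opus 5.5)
The plan follows the strategy sketched in the introduction: first the smooth proper case, then descent along a smooth proper hypercover, with weights used to separate the pieces that do not descend.

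\emph{Smooth case.} Let \(\ol Y\) be smooth and proper over \(\ol k\), and work one connected component at a time. There is a sequence of Zariski sheaves \(0\to\cO^\times\otimes\Ql\to\cH^1\to\cQ\to0\), obtained from the Kummer sequence and Bloch--Ogus (Gersten) theory. For \(\ol Y\) smooth, purity shows that \(\cQ\) is the constant sheaf \(\underline{\rH^1_{\et}(\ol Y,\Ql(1))}\); equivalently, \(\cQ\) is the image of \(\rH^1\) of the generic point modulo units. Since \(\ol Y\) is irreducible, a constant sheaf on it is flasque, so \(\rH^{\ge1}_{\Zar}(\ol Y,\cQ)=0\). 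Lemma~\ref{lem:pic-via-cartier} gives \(\rH^1_{\Zar}(\ol Y,\cO^\times\otimes\Ql)=\Pic\otimes\Ql\) and \(\rH^2_{\Zar}(\ol Y,\cO^\times\otimes\Ql)=0\). The long exact sequence then reads
\[
\rH^0(\cH^1)\to\rH^1_{\et}(\ol Y,\Ql(1))\to\Pic(\ol Y)\otimes\Ql\to\rH^1_{\Zar}(\ol Y,\cH^1)\to0 .
\]
The second map is zero because the first is surjective: by Lemma~\ref{lem:h1-proet-presheaf-is-sheaf}, \(\rH^0(\cH^1)=\rH^1_{\et}(\ol Y,\Ql(1))\). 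Together with \eqref{eq:neron-severi-1}, this shows that \(\widetilde c_1:\NS(\ol Y)\otimes\Ql\to\rH^1_{\Zar}(\ol Y,\cH^1)\) is an isomorphism. Both sides are then pure of weight \(0\), by (\ref{para:neron-severi}). Equivariance is already known from (\ref{para:cycle-class}).

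\emph{Descent.} For general reduced proper \(X\), I will use de Jong alterations to choose a proper hypercover \(\ol Y_\bullet\to\ol X\) with each \(\ol Y_n\) smooth proper, defined over a finite extension of \(k\) (replacing \(F\) by a power does not affect weights). This gives a descent spectral sequence. For \(\ell\)-adic cohomology it comes from cohomological descent along \(h\)-hypercovers, using the Leray sequences of Proposition~\ref{prop:ZL-frobenius-comparison}, which are compatible under pullback. For the Picard side, Bhatt--Scholze \(v\)-descent for vector bundles gives rational descent. Comparing the two, \(\Pic(\ol X)\otimes\Ql\) should be determined by \(\ker\bigl(\Pic(\ol Y_0)\otimes\Ql\rightrightarrows\Pic(\ol Y_1)\otimes\Ql\bigr)\) together with a contribution from \(\rH^1\) of the complex \(\Gamma(\ol Y_\bullet,\cO^\times)\otimes\Ql\). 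On the \(\cH^1\) side the analogous terms are \(\ker\bigl(\rH^1_{\Zar}(\ol Y_0,\cH^1)\rightrightarrows\rH^1_{\Zar}(\ol Y_1,\cH^1)\bigr)\) and \(\rH^1\) of the complex \(\rH^1_{\et}(\ol Y_\bullet,\Ql(1))\). Here \(\Gamma(\ol Y_n,\cO^\times)\otimes\Ql\) is \(\Ql^{\pi_0}\) twisted, while \(\rH^1_{\et}(\ol Y_n,\Ql(1))\) is pure of weight \(-1\). The weight-\((-1)\) contribution therefore dies upon taking \(w=0\) parts by Lemma~\ref{lem:Frobenius-module}, and the weight-\((-2)\) Zariski constants are handled as in Lemma~\ref{lem:Zar-Ql-weight}. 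The weight-zero part of the descent term is then \(\ker(\NS(\ol Y_0)_\Ql\rightrightarrows\NS(\ol Y_1)_\Ql)\).

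\emph{Main obstacle.} The hard step is matching \(\Pic(\ol X)\otimes\Ql\) exactly with this kernel, that is, showing that a descent datum on line bundles up to \(\Ql\) actually comes from \(\ol X\). The degree-\(0\) term is where seminormality enters: units on \(\ol X\) must satisfy descent. I would first pass to the seminormalization \(\ol X^{sn}\to\ol X\). This is a universal homeomorphism, so it leaves \(\cH^1\) unchanged, and by a \(p\)-power argument it changes \(\Pic\otimes\Ql\) only by torsion. For seminormal \(\ol X\), \(\cO^\times\) satisfies sdh descent. I would then assemble an auxiliary sheaf \(\cF\) fitting in a map of short exact sequences between the \(\cO^\times\otimes\Ql\)-descent sequence and the \(\cH^1\)-descent sequence, and conclude with the five lemma on weight-zero parts.
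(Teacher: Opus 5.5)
\textbf{Verdict: the smooth case works, but the descent step has a genuine gap at the decisive point.}

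Your smooth proper case is correct. It differs from the paper's only in packaging. Rather than reading \(\rH^{*}_{\Zar}(\ol Y,\cH^1)\) off the Gersten resolution, you use the extension of the constant sheaf \(\underline{\rH^1_{\et}(\ol Y,\Ql(1))}\) by \(\cO^\times\otimes\Ql\) (the splitting in the paper's footnote). That recovers all of Proposition~\ref{cor:cycle-class-smooth-proper}.

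\textbf{What your descent terms compute.} The terms on your ``\(\cH^1\) side'' are the equalizer of the \(\rH^1_{\Zar}(\ol Y_a,\cH^1)\) and \(\rH^1\) of the complex \(\rH^1_{\et}(\ol Y_\bullet,\Ql(1))\). These compute the simplicial group \(\bH^1_{\Zar}(\ol X_\bullet,\cH^1_\bullet(\Ql(1)))\), not \(\rH^1_{\Zar}(\ol X,\cH^1)\), and there is no formal descent relating the two.
\begin{itemize}
\item The augmentation \(u\colon\cH^1_{\ol X}(\Ql(1))\to\cF=R^0\pi_{\bullet,*}\cH^1_\bullet(\Ql(1))\) need not be an isomorphism. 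For a nodal curve covered by its normalisation, its kernel is a copy of \(\Ql(1)\) at the node.
\item \'Etale cohomological descent only identifies the abutments \(\rH^2_{\et}(\ol X,\Ql(1))\cong\bH^2_{\et}(\ol X_\bullet,\Ql(1))\). By itself it does not identify the graded pieces at \((1,1)\).
\end{itemize}
So your closing step, a five lemma comparing the \(\cO^\times\otimes\Ql\)- and \(\cH^1\)-descent sequences through \(\cF\), can at best give \(\NS(\ol X)\otimes\Ql\cong\rH^1_{\Zar}(\ol X,\cF)^{w=0}\). That is Proposition~\ref{prop:delta-cokernel}. Since that map is \(u_*\circ\widetilde c_1\), it yields injectivity of \(\widetilde c_1\). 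Surjectivity onto \(\rH^1_{\Zar}(\ol X,\cH^1)^{w=0}\) needs a statement you never address: a weight-zero class of \(\rH^1_{\Zar}(\ol X,\cH^1)\) that dies on the hypercover is zero (Proposition~\ref{prop:u-inj}).

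\textbf{How the paper fills it.} This is where the weight-\((-2)\) terms genuinely enter.
\begin{itemize}
\item \(E_2^{2,0}(\ol X)\) and \(E_2^{3,0}(\ol X)\) in the Leray spectral sequence for \(\lambda_{\ol X}\) are pure of weight \(-2\) (Lemma~\ref{lem:Zar-Ql-weight}).
\item Hence a weight-zero \(\alpha\in E_2^{1,1}(\ol X)\) survives to \(E^{1,1}_\infty\) and has a unique weight-zero lift \(\widetilde\alpha\in F^1\rH^2_{\et}(\ol X,\Ql(1))\).
\item If \(\alpha\) dies on \(\ol X_\bullet\), then \(\pi_\bullet^*\widetilde\alpha\) lies in \(F^2\bH^2_{\et}(\ol X_\bullet,\Ql(1))\), which is pure of weight \(-2\) (Lemma~\ref{lem:F2-weight}). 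So \(\pi_\bullet^*\widetilde\alpha=0\), and cohomological descent gives \(\widetilde\alpha=0\).
\item Now \(j_1\circ u_*\circ\widetilde c_1=\mu\circ\pi_\bullet^*\) is an isomorphism onto the weight-zero part (Propositions~\ref{prop:mu} and~\ref{prop:NS-descent-Fpbar}), while \(j_1\circ u_*\) is injective on weight zero. This forces \(\widetilde c_1\) to be an isomorphism.
\end{itemize}

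\textbf{A shorter route from your own hint.} Your sentence invoking cohomological descent and Proposition~\ref{prop:ZL-frobenius-comparison} points at the right tool; made precise, it closes the gap directly.
\begin{itemize}
\item Replace \(\rH^1_{\Zar}(\ol X,\cH^1)^{w=0}\) by \((F^1\rH^2_{\et}(\ol X,\Ql(1)))^{w=0}\) using that proposition.
\item Embed \(\rH^2_{\et}(\ol X,\Ql(1))^{w=0}\) into \(\operatorname{Eq}(\rH^2_{\et}(\ol X_0,\Ql(1))\rightrightarrows\rH^2_{\et}(\ol X_1,\Ql(1)))\) via the \'etale descent spectral sequence and Deligne purity, as in Lemma~\ref{lem:fo-descent}.
\item Pullback preserves Zariski-local triviality, and on smooth proper levels \(F^1\rH^2_{\et}\) is \(\NS\otimes\Ql\) by your smooth case and Lemma~\ref{lem:leray-edge}. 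So the image lies in \(\operatorname{Eq}(\NS(\ol X_0)\otimes\Ql\rightrightarrows\NS(\ol X_1)\otimes\Ql)\), which is \(\NS(\ol X)\otimes\Ql\) by Bhatt--Scholze.
\end{itemize}

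\textbf{The obstacle is misplaced.} Rational Picard descent is not the hard step. Lemma~\ref{lem:pic-descent-Fpbar} gives it for every proper reduced \(X\), with no seminormality, because the \v{C}ech terms built from global units are torsion. Relatedly, \(\Gamma(\ol Y_n,\cO^\times)\otimes\Ql\) is not a twisted \(\Ql^{\pi_0}\). It equals \((\ol k^{\times})^{\pi_0}\otimes\Ql=0\) (Lemma~\ref{lem:torsion-ag}). The twisted constants \(\Ql(1)^{\pi_0}\) live in the row \(R^0\lambda_*\Ql(1)\) of the Leray spectral sequences, which is exactly where the weight argument above uses them.
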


We now deduce the integral refinement of Theorem~\ref{thm:main-result} stated in the introduction
(Corollary~\ref{cor:integral-form}). The proof of the theorem itself is completed in
\S\ref{sec:main-theorem-proof}, drawing on the contents of \S\S\ref{sec:sdh-seminormality}--\ref{sec:auxiliary-sheaf-F}.

\begin{para}[Integral form]\label{para:integral-form}
Theorem~\ref{thm:main-result} refines to an integral statement of Lefschetz \((1,1)\) type. For each
\(n\) the Kummer sequence of (\ref{para:cycle-class}) yields on \(\ol{X}_{\et}\) the exact sequence
\[
  0\to\Pic(\ol X)/\ell^{n}\to\rH^{2}_{\et}(\ol X,\mu_{\ell^{n}})\to\rH^{2}_{\et}(\ol X,\bbG_{m})[\ell^{n}]\to0 .
\]
Since \(\Pic^{0}(\ol X)(\ol k)\) is \(\ell\)-divisible, we have \(\Pic(\ol X)/\ell^{n}=\NS(\ol X)/\ell^{n}\). Further the transition maps \(\Pic(\ol X)/\ell^{n+1}\twoheadrightarrow\Pic(\ol X)/\ell^{n}\) are surjective, so \(\varprojlim^{1}_{n}\Pic(\ol X)/\ell^{n}=0\). Passing to
\(\varprojlim_{n}\) and using
\(\varprojlim_{n}\NS(\ol X)/\ell^{n}=\NS(\ol X)\otimes\bbZ_{\ell}\) (as \(\NS(\ol X)\) is finitely
generated), \(\varprojlim_{n}\rH^{2}_{\et}(\ol X,\mu_{\ell^{n}})=\rH^{2}_{\et}(\ol X,\bbZ_{\ell}(1))\)
(\ref{para:proet}), and \(\varprojlim_{n}\rH^{2}_{\et}(\ol X,\bbG_{m})[\ell^{n}]=T_{\ell}\,\mathrm{Br}(\ol
X)\), gives the exact sequence of \(\bbZ_{\ell}\)-modules
\begin{equation}\label{eq:integral-NS-ses}
  0\to\NS(\ol X)\otimes\bbZ_{\ell}\to\rH^{2}_{\et}(\ol X,\bbZ_{\ell}(1))\to T_{\ell}\,\mathrm{Br}(\ol X)\to0 ,
\end{equation}
with the \(\ell\)-adic Tate module of the Brauer group being torsion-free. Hence
\(\NS(\ol X)\otimes\bbZ_{\ell}\) is a saturated subgroup of \(\rH^{2}_{\et}(\ol X,\bbZ_{\ell}(1))\)
containing all of its torsion.

Recall that \(\rH^{2}_{\ZL}(\ol X,\bbZ_{\ell}(1))\), the subgroup of Zariski-locally trivial classes, is the
first step \(F^{1}\rH^{2}_{\et}(\ol X,\bbZ_{\ell}(1))\) of the Leray filtration for \(\lambda_{\ol X}\). A class
lies in \(F^{1}\) precisely when its image in \(\rH^{0}_{\Zar}(\ol X,R^{2}\lambda_{\ol X,*}\widehat{\bbZ}_{\ell}(1))\)
vanishes, that is, when it restricts to zero on some Zariski-open cover. The
cycle class refines to \(c_{1}\colon\NS(\ol X)\otimes\bbZ_{\ell}\to\rH^{2}_{\ZL}(\ol X,\bbZ_{\ell}(1))\).

The same filtered complex
\[
  R\Gamma_{\Zar}(\ol X,R\lambda_{\ol X,*}-)
\]
induces the integral and rational Leray filtrations, with coefficients \(\widehat{\bbZ}_{\ell}(1)\) and
\(\Ql(1)=\widehat{\bbZ}_{\ell}(1)[\ell^{-1}]\). Since inverting \(\ell\) is exact,
\[
  F^{1}\rH^{2}_{\et}(\ol X,\bbZ_{\ell}(1))\otimes\Ql
  =
  F^{1}\rH^{2}_{\et}(\ol X,\Ql(1))
\]
inside \(\rH^{2}_{\et}(\ol X,\Ql(1))\).

By Proposition~\ref{prop:ZL-frobenius-comparison}, the weight-\(0\) part of
\(F^{1}\rH^{2}_{\et}(\ol X,\Ql(1))\) equals \(\rH^1_{\Zar}(\ol X, \cH^1)^{w=0}\).

\begin{cor}\label{cor:integral-form}
For \(X\) proper reduced over \(k\) the cycle class is an isomorphism
\[
  c_{1}\colon\NS(\ol X)\otimes\bbZ_{\ell}\xrightarrow{\ \sim\ }
  \bigl\{\alpha\in\rH^{2}_{\ZL}(\ol X,\bbZ_{\ell}(1)) : \alpha\otimes\Ql\in\rH^{2}_{\et}(\ol X,\Ql(1))^{w=0}\bigr\}.
\]
In particular every torsion class of \(\rH^{2}_{\et}(\ol X,\bbZ_{\ell}(1))\) (necessarily \(\ell\)-primary) lies in the image and comes
from the \(\ell\)-primary torsion of the finitely generated group \(\NS(\ol X)\).
\end{cor}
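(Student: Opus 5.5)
Write \(H=\rH^{2}_{\et}(\ol X,\bbZ_{\ell}(1))\), \(H_{\bbQ}=H\otimes\Ql=\rH^{2}_{\et}(\ol X,\Ql(1))\), and let \(S\subseteq H\) be the right-hand side of the statement. The plan is to deduce the corollary formally from Theorem~\ref{thm:main-result}, Proposition~\ref{prop:ZL-frobenius-comparison} and the exact sequence \eqref{eq:integral-NS-ses}. No further geometry should be needed.

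\emph{Step 1: the image lies in \(S\).} By (\ref{para:integral-form}), \(c_1\) lands in \(\rH^2_{\ZL}(\ol X,\bbZ_\ell(1))=F^1H\). By (\ref{para:neron-severi}), \(\NS(\ol X)\otimes\Ql\) is pure of weight \(0\). Hence the rational image of any class lies in \(H_{\bbQ}^{w=0}\), and \(c_1(\NS(\ol X)\otimes\bbZ_\ell)\subseteq S\).

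\emph{Step 2: injectivity.} This is the left exactness in \eqref{eq:integral-NS-ses}.

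\emph{Step 3: surjectivity.} Take \(\alpha\in S\). Then \(\alpha\otimes\Ql\in F^1H_{\bbQ}\cap H_{\bbQ}^{w=0}=(F^1H_{\bbQ})^{w=0}\). The equality \(F^1H\otimes\Ql=F^1H_{\bbQ}\) is recorded in (\ref{para:integral-form}), and the weight-\(0\) part of a Frobenius submodule is its intersection with the ambient weight-\(0\) part by Lemma~\ref{lem:Frobenius-module}(i).

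\begin{enumerate}[(a)]
\item By Proposition~\ref{prop:ZL-frobenius-comparison}, \(\theta_X\) maps \((F^1H_{\bbQ})^{w=0}\) isomorphically onto \(\rH^1_{\Zar}(\ol X,\cH^1)^{w=0}\).
\item By Theorem~\ref{thm:main-result}, \(\widetilde c_1\) maps \(\NS(\ol X)\otimes\Ql\) isomorphically onto the same space.
\item The key compatibility is \(\theta_X\circ c_1=\widetilde c_1\). Both are induced by \(d\log_{\ell}\): the Chern class of a line bundle factors through \(\rH^1_{\Zar}(\ol X,\cO^\times)\to\rH^1_{\Zar}(\ol X,R^1\lambda_*\Ql(1))\), which is the \(E^{1,1}\) piece of the Leray filtration. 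So the global Kummer boundary composed with the edge map equals the sheafified boundary. I would verify this by the standard compatibility of boundary maps with the Leray spectral sequence (via the filtration on \(R\Gamma_\Zar R\lambda_*\)); this is the one non-formal point.
\end{enumerate}

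Since \(\theta_X\) is injective on weight-\(0\) parts, (a)–(c) give \(c_1(\NS(\ol X)\otimes\Ql)=(F^1H_{\bbQ})^{w=0}\). Hence \(\alpha\otimes\Ql=c_1(\beta)\) for some \(\beta\in\NS(\ol X)\otimes\Ql\). Choose \(m\) with \(\ell^m\beta\in\NS(\ol X)\otimes\bbZ_\ell\). Then \(\ell^m\alpha-c_1(\ell^m\beta)\) lies in the torsion of \(H\), so \(\ell^{m'}\alpha\in c_1(\NS(\ol X)\otimes\bbZ_\ell)\) for some \(m'\). Since \(T_\ell\mathrm{Br}(\ol X)\) is torsion-free, the image of \(\alpha\) in it is zero, and \(\alpha\) lies in the image by \eqref{eq:integral-NS-ses}.

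\emph{Torsion statement.} A torsion class maps to \(0\) in the torsion-free \(T_\ell\mathrm{Br}(\ol X)\), so it comes from \(\NS(\ol X)\otimes\bbZ_\ell\). Its preimage is torsion by injectivity. The torsion of \(\NS(\ol X)\otimes\bbZ_\ell\) is the \(\ell\)-primary torsion of \(\NS(\ol X)\), which is finitely generated.

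The main obstacle is (c), the compatibility \(\theta_X\circ c_1=\widetilde c_1\). I would settle it by writing \(c_1\) as the boundary of the Kummer triangle in the derived category and applying the truncation \(\tau_{\le1}R\lambda_*\), whose \(\rH^2\) maps to \(\rH^1(\cH^1)\).
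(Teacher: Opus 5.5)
Your proposal is correct and follows the paper's own proof. The paper uses the same steps: purity for the image, injectivity and saturation from \eqref{eq:integral-NS-ses}, and the identification \(\bigl(F^1\rH^2_{\et}(\ol X,\Ql(1))\bigr)^{w=0}\cong\rH^1_{\Zar}(\ol X,\cH^1)^{w=0}\cong\NS(\ol X)\otimes\Ql\) from Proposition~\ref{prop:ZL-frobenius-comparison} and Theorem~\ref{thm:main-result}. The compatibility \(\theta_X\circ c_1=\widetilde c_1\) that you isolate in (c) is used only tacitly in the paper's chain of equalities, and your truncation argument justifies it correctly: the Kummer boundary \(\cO^\times\to R\lambda_*\widehat{\bbZ}_\ell(1)[1]\) factors through \((\tau_{\le1}R\lambda_*)[1]\) because the source sits in degree \(0\).
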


\begin{proof}
The image of \(c_{1}\) is Zariski-locally trivial and pure of weight \(0\) rationally, so \(c_{1}\) maps
into the displayed set and is injective by \eqref{eq:integral-NS-ses}. Conversely, if \(\alpha\) lies in
the set then
\(\alpha\otimes\Ql\in\bigl(F^{1}\rH^{2}_{\et}(\ol X,\Ql(1))\bigr)^{w=0}=\rH^1_{\Zar}(\ol X, \cH^1)^{w=0}=\NS(\ol
X)\otimes\Ql\) by Theorem~\ref{thm:main-result}. Thus \(\alpha\otimes\Ql\) lies in the \(\Ql\)-span of
\(\NS(\ol X)\otimes\bbZ_{\ell}\), and saturation in \eqref{eq:integral-NS-ses} forces
\(\alpha\in\NS(\ol X)\otimes\bbZ_{\ell}\). The torsion assertion follows from torsion-freeness of the
\(\ell\)-adic Tate module of the Brauer group.
\end{proof}
\end{para}

\section{sdh hypercovers and seminormality}
\label{sec:sdh-seminormality}

We use the sdh topology to compute functions and units in degree \(0\) on a seminormal scheme from a
smooth proper hypercover.

Following \cite[Tag~0EUS]{stacks-project}, let
\(\rho\colon X^{\mathrm{sn}}\to X\) be the seminormalisation of \(X\). It is initial among universal
homeomorphisms \(Y\to X\) inducing isomorphisms on residue fields. A scheme is \emph{seminormal} if
\(\rho\) is an isomorphism. This condition is local on affine opens \cite[Tag~0EUP]{stacks-project}.

\begin{defn}[sdh-topology]\label{defn:sdh-topology}
For a field \(K\), the \emph{sdh-topology} on the category of \(K\)-schemes is the Grothendieck topology
generated by \'{e}tale covers together with proper morphisms \(f:Y\to T\) such that for every \(t\in T\)
there exists \(y\in f^{-1}(t)\) with \(k(y)/k(t)\) finite separable. For a \(K\)-scheme \(X\), the small
sdh-site \(X_{\sdh}\) is the induced site on \(X\)-schemes.
\end{defn}

In characteristic \(p\) the separability condition is essential. Dropping it produces the h-topology,
whose covers include Frobenius and whose sheafification sees the perfection rather than the
seminormalisation.

For a field \(K\), let \(a_{\sdh}\) denote sdh sheafification on \(K\)-schemes, and let \(h_{Y}\) denote
the presheaf represented by a \(K\)-scheme \(Y\).

\begin{prop}[Huber--Kelly]\label{prop:huber-kelly}
Let \(K\) be a field. For every reduced \(K\)-scheme \(U\),
\[
  (a_{\sdh}h_{\bbA^{1}})(U)=\Gamma(U^{\mathrm{sn}},\cO_{U^{\mathrm{sn}}}) \qquad\text{and}\qquad
  (a_{\sdh}h_{\bbG_{m}})(U)=\Gamma(U^{\mathrm{sn}},\cO^{\times}_{U^{\mathrm{sn}}}).
\]
\end{prop}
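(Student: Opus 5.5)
The plan is to show that the presheaf
\[
  \cO^{\mathrm{sn}}\colon U\longmapsto \Gamma\bigl((U_{\mathrm{red}})^{\mathrm{sn}},\cO\bigr)
\]
on \(K\)-schemes is an sdh sheaf, and that the natural map \(h_{\bbA^{1}}=\cO\to\cO^{\mathrm{sn}}\) becomes an isomorphism after sdh sheafification. The \(\bbG_{m}\) statement then follows by the same argument, because a unit is a function together with an inverse and both conditions are sheaf-theoretic. Equivalently, \(\cO^{\mathrm{sn},\times}\) is the subsheaf of \(\cO^{\mathrm{sn}}\times\cO^{\mathrm{sn}}\) cut out by the equation \(fg=1\). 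Since \(a_{\sdh}\) commutes with finite limits, it follows that \(a_{\sdh}h_{\bbG_{m}}=\cO^{\mathrm{sn},\times}\).

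\emph{Step 1: local isomorphism.} For any \(K\)-scheme \(U\), both \(U_{\mathrm{red}}\to U\) and \(\rho\colon U^{\mathrm{sn}}\to U_{\mathrm{red}}\) are proper, surjective universal homeomorphisms inducing isomorphisms on residue fields. They are therefore sdh covers in the sense of Definition~\ref{defn:sdh-topology}, with trivially separable residue extensions. Hence every section of \(\cO^{\mathrm{sn}}\) over \(U\) is, sdh-locally (namely on \(U^{\mathrm{sn}}\)), the image of a section of \(\cO\). This gives local surjectivity. For local injectivity, note that \(\cO(U_{\mathrm{red}})\to\cO(U^{\mathrm{sn}})\) is injective, because \(\rho\) is a dominant morphism of reduced schemes. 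So a function killed in \(\cO^{\mathrm{sn}}(U)\) becomes zero after pulling back along the sdh cover \(U_{\mathrm{red}}\to U\). Granting Step 2, it follows that \(a_{\sdh}\cO\cong\cO^{\mathrm{sn}}\). For reduced \(U\), this gives exactly the two displayed formulas.

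\emph{Step 2: \(\cO^{\mathrm{sn}}\) is an sdh sheaf.} It suffices to check the sheaf condition for étale covers and for the proper covers of Definition~\ref{defn:sdh-topology} separately.
\begin{itemize}
\item For étale covers, seminormalisation commutes with étale base change. Hence \(\cO^{\mathrm{sn}}\) restricted to \(T_{\et}\) is the pushforward of the structure sheaf of \(T^{\mathrm{sn}}\), which is an étale sheaf.
\item For a proper cover \(f\colon Y\to T\), we may replace \(T\) by \(T^{\mathrm{sn}}\), since the cover pulls back to a cover. We may also replace \(Y\) by \(Y^{\mathrm{sn}}\), since both are sdh covers of universal-homeomorphism type. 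So we may assume \(T\) is seminormal, and we must show that
\[
  \cO(T)\to\cO(Y^{\mathrm{sn}})\rightrightarrows\cO\bigl((Y\times_{T}Y)^{\mathrm{sn}}\bigr)
\]
is an equaliser. Injectivity holds because \(f\) is surjective and \(T\) is reduced.
\end{itemize}

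For existence, take a function \(g\) on \(Y^{\mathrm{sn}}\) whose two pullbacks agree. We first produce pointwise values on \(T\). For \(t\in T\), choose \(y\) over \(t\) with \(k(y)/k(t)\) finite separable. The value \(g(y)\in k(y)\) has equal images in \(k(y)\otimes_{k(t)}k(y)\), because points of \(Y\times_T Y\) over \(t\) surject onto the spectrum of this reduced ring. Separable Galois descent then gives \(g(y)\in k(t)\), independent of the choice of \(y\). This yields a function \(t\mapsto g(t)\in k(t)\).

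Next we make this function algebraic. The subring \(\cO_T[g]\subset f_*\cO_{Y^{\mathrm{sn}}}\) is a finite \(\cO_T\)-algebra, since \(f\) is proper. Its spectrum \(T'\to T\) is finite and surjective. The pointwise computation shows that it is a universal homeomorphism inducing isomorphisms on residue fields. Seminormality of \(T\), using the universal property of \(\rho\) recalled above and \cite[Tag~0EUS]{stacks-project}, then forces \(T'=T\), so \(g\in\cO(T)\).

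\emph{Main obstacle.} The delicate point is this descent step for proper covers. One must check that \(T'\to T\) really has trivial residue field extensions and is radicial. This is exactly where the separability hypothesis enters: for an inseparable \(k(y)/k(t)\), the equaliser of \(k(y)\rightrightarrows k(y)\otimes_{k(t)}k(y)\) computed in the reduced tensor product is larger than \(k(t)\), and the argument would only produce the perfection, as in the h-topology. I would first try to run the argument stalkwise after reducing to \(T\) the spectrum of a local ring, possibly strictly henselian via the étale case. If the direct construction becomes messy, the fallback is to quote the corresponding statement of Huber--Kelly (the sdh sheafification of \(\cO\) is \(\cO^{\mathrm{sn}}\)) verbatim.
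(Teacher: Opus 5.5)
Your proposal is correct. The \(\bbG_m\) half is the same as in the paper, which also writes \(h_{\bbG_m}\) as the equaliser of \((f,g)\mapsto fg\) and \((f,g)\mapsto 1\) on \(h_{\bbA^1}\times h_{\bbA^1}\) and uses left exactness of \(a_{\sdh}\). The \(\bbA^1\) half takes a different route. The paper proves nothing there and simply quotes \cite[Proposition~6.14]{HuberKelly2018}, which is your stated fallback. You instead show directly that \(U\mapsto\Gamma((U_{\mathrm{red}})^{\mathrm{sn}},\cO)\) is an sdh sheaf receiving a local isomorphism from \(h_{\bbA^1}\). The citation buys brevity and Huber--Kelly's generality. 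Your argument is self-contained for finite-type \(K\)-schemes and shows exactly where separability enters. It already goes through as written, so the detour through strictly henselian local rings that you contemplate is unnecessary.

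Two points in the proper-cover step should be made explicit.

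First, you need \(g(y'')=g(t)\) for \emph{every} \(y''\in f^{-1}(t)\), not only for those with \(k(y'')/k(t)\) separable. Otherwise \(T'\) could have a point over \(t\) with residue field \(k(t)(g(y''))\neq k(t)\). To get this, fix \(y\) with \(k(y)/k(t)\) finite separable and let \(y''\) be arbitrary. Then \(k(y)\otimes_{k(t)}k(y'')\) is finite \'{e}tale over \(k(y'')\), hence a product of fields, hence seminormal. So its spectrum maps to \((Y\times_TY)^{\mathrm{sn}}\) compatibly with both projections to \(Y^{\mathrm{sn}}\), because maps from seminormal schemes factor uniquely through seminormalisations. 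Comparing the two pullbacks of \(g\) gives \(g(y)\otimes1=1\otimes g(y'')\).
\begin{itemize}
\item For \(y''=y\), this is the precise form of your equality, and it gives \(g(y)\in k(t)\) by faithfully flat descent. Your phrase about points of \(Y\times_TY\) surjecting onto the spectrum should be replaced by this.
\item For general \(y''\), it then gives \(g(y'')=g(y)\), since \(k(y'')\to k(y)\otimes_{k(t)}k(y'')\) is injective.
\end{itemize}
Combine this with surjectivity of \(Y^{\mathrm{sn}}\to T'\), which holds because the map is proper with dense image. Then \(T'\to T\) is finite, bijective, and has trivial residue extensions, hence is a universal homeomorphism.

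Second, to conclude \(T'=T\): seminormality of \(T\) gives a section \(T\to T'\). This section is a bijective closed immersion. It is therefore an isomorphism, because \(T'\) is reduced, \(\cO_T[g]\) being a subsheaf of \(f_*\cO_{Y^{\mathrm{sn}}}\).
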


\begin{proof}
The statement for \(\bbA^{1}\) is \cite[Proposition~6.14]{HuberKelly2018} (whose Nagata hypothesis holds
for the field \(K\)). The sdh-sheafification of the functions presheaf is computed on the
seminormalisation. For \(\bbG_{m}\), realise \(\bbG_{m}\) as the closed subscheme
\(\{xy=1\}\subseteq\bbA^{2}\). Then \(h_{\bbG_{m}}\) is the
equaliser of the two maps \(h_{\bbA^{1}}\times h_{\bbA^{1}}\rightrightarrows h_{\bbA^{1}}\),
\((f,g)\mapsto fg\) and \((f,g)\mapsto 1\). Sheafification is left exact, so \(a_{\sdh}h_{\bbG_{m}}\) is the
equaliser of \((a_{\sdh}h_{\bbA^{1}})^{2}=\Gamma((-)^{\mathrm{sn}},\cO)^{2}\) under the same two maps,
namely \(\{(f,g):fg=1\}=\Gamma((-)^{\mathrm{sn}},\cO^{\times})\).
\end{proof}

An \emph{sdh-hypercover} of a \(K\)-scheme \(X\) is an augmented simplicial \(K\)-scheme
\(\pi_{\bullet}:X_{\bullet}\to X\) such that
\(X_{n}\to(\cosk_{n-1}X_{\bullet})_{n}\) is an sdh-cover for all \(n\geq0\), with the convention
\((\cosk_{-1}X_{\bullet})_{0}=X\). It is \emph{smooth proper} if each \(X_n\) is a finite disjoint union
of smooth proper \(K\)-schemes.

\begin{prop}\label{prop:smooth-proper-sdh-hypercover}
Let \(X\) be a proper \(k\)-scheme. Then there exists an sdh-hypercover \(\pi_{\bullet}:X_{\bullet}\to
X\) such that each \(X_{n}\) is a finite disjoint union of smooth proper \(k\)-schemes.
\end{prop}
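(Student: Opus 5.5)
We build the hypercover one simplicial level at a time, the standard way, using de Jong's alterations to produce smooth proper covers that satisfy the separability condition in Definition~\ref{defn:sdh-topology}. The needed input is a \emph{covering lemma}:
\begin{quote}
for every proper \(k\)-scheme \(T\) there is a proper surjective morphism \(f\colon Y\to T\) with \(Y\) a finite disjoint union of smooth proper \(k\)-schemes, such that every \(t\in T\) has a preimage \(y\) with \(k(y)/k(t)\) finite separable.
\end{quote}
Granting this, the hypercover is assembled as follows. Set \(X_0:=Y\) for \(T=X\). Inductively, suppose the \((n-1)\)-truncated simplicial scheme has been built with every level a finite disjoint union of smooth proper \(k\)-schemes. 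Then \((\cosk_{n-1}X_\bullet)_n\) is a finite limit of proper \(k\)-schemes over the proper base \(X\), hence proper over \(k\). Apply the covering lemma to it, getting \(N_n\to(\cosk_{n-1}X_\bullet)_n\). Define \(X_n\) as the disjoint union of \(N_n\) with the degenerate part (the standard construction of \cite[Tag~0DAV]{stacks-project} / SGA4 V.7.2 of a hypercovering from covers of coskeleta). Each map \(X_n\to(\cosk_{n-1}X_\bullet)_n\) is then an sdh-cover. It stays one after adding the degenerate summands, since a disjoint union containing a cover is a cover. Finally \(X_n\), being a finite disjoint union of pieces that are smooth proper or are earlier levels, is again of the required form.

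For the covering lemma we argue by noetherian induction on \(T\), which we may take reduced. Let \(T_1,\dots,T_r\) be its irreducible components with reduced structure.
\begin{enumerate}[(i)]
\item By de Jong's alteration theorem, in Gabber's prime-to-\(\ell\) refinement (or Temkin's separable refinement), each \(T_i\) admits a proper, generically finite surjection \(Y_i\to T_i\) from a smooth proper (even projective) \(k\)-variety. The key requirement is that the alteration be generically \emph{separable}, i.e.\ generically étale. This holds over the perfect field \(k\): de Jong's original construction yields \(k(Y_i)/k(T_i)\) finite, and we can arrange separability by composing with a suitable Frobenius twist, or by citing the version giving generically étale alterations over perfect fields.
\item Hence there is a dense open \(V_i\subset T_i\) over which \(Y_i\to T_i\) is finite étale, so every point of \(V_i\) has a preimage with finite separable residue field extension.
\item The complement \(Z=T\setminus\bigcup_i V_i\) is a proper closed subset. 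By induction it has a cover \(Y_Z\to Z\) of the required type.
\item Take \(Y=\coprod_i Y_i\sqcup Y_Z\). This is proper over \(T\) and satisfies the pointwise condition at every point.
\end{enumerate}

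The main obstacle is the separability in step~(i). For points with perfect residue field the condition is automatic. Points of \(T\) need not have perfect residue fields, however, and generic points of positive-dimensional components have imperfect function fields. So we really need generically étale alterations, and we expect this to be the point at which to cite the precise theorem: de Jong, Theorem~4.1, gives separable alterations over perfect fields.

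A second, minor concern is that the coskeleton must stay within finite type proper \(k\)-schemes. This is clear, since fibre products of proper \(k\)-schemes over \(X\) are proper.
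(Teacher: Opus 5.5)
Your proposal is essentially the paper's proof. It builds a covering lemma by noetherian induction from de Jong's generically \'etale alterations over the perfect field \(k\) (Theorem~4.1 with Remark~4.2, which is the citation that actually supplies separability; the Frobenius-twist, Gabber and Temkin asides do not and can be dropped), and then runs the split Verdier construction on the coskeleta, which are proper over \(k\).

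One small repair is needed. \(T\setminus\bigcup_i V_i\) need not be closed when some \(V_i\) meets another component. For example, take two planes meeting in a line, with a branch conic on one plane crossing that line; the uncovered set is then the conic minus two points. So either shrink each \(V_i\) to avoid the other components, as the paper does, or take \(Z:=\bigcup_i (T_i\setminus V_i)\). This \(Z\) is closed, contains no generic point, and still contains every point not yet covered.
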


\begin{proof}
Call a proper morphism \(Y\to T\) \emph{separably decomposed} if every \(t\in T\) has a preimage \(y\)
with \(k(y)/k(t)\) finite separable. Such morphisms are sdh-covers and are stable under base change,
composition, and finite disjoint union, and the defining condition is checked pointwise on \(T\). We
claim every proper \(T/k\) admits a separably decomposed cover by a finite disjoint union of smooth
proper \(k\)-schemes, arguing by noetherian induction on \(\dim T\). Replacing \(T\) by
\(T_{\mathrm{red}}\) (the nilimmersion \(T_{\mathrm{red}}\to T\) induces isomorphisms on residue fields,
hence is an sdh-cover) we may assume \(T\) reduced. Let \(T_{1},\dots,T_{r}\) be its irreducible components
and \(U_{i}\subseteq T_{i}\) a dense open disjoint from the other components. By
\cite[Theorem~4.1 and Remark~4.2]{deJong96} there is an alteration \(\widetilde T_{i}\to T_{i}\) with
\(\widetilde T_{i}\) smooth proper over \(k\). As \(k\) is perfect it may be taken generically \'etale, so
after shrinking \(U_{i}\) it is finite \'etale over \(U_{i}\). A finite \'etale morphism has finite
separable residue field extensions at every point, so \(\widetilde T_{i}\to T_{i}\) is separably
decomposed over \(U_{i}\). The closed complement \(T\setminus\bigsqcup_{i}U_{i}\) has strictly smaller
dimension, so by induction admits such a cover. The disjoint union of that cover with the
\(\widetilde T_{i}\) is a separably decomposed smooth proper cover of \(T\).

Now apply the split Verdier hypercover construction
\cite[V$_{\mathrm{bis}}$, Section 5]{SGA4_II}. Having chosen the lower non-degenerate levels, the
\(n\)-th matching object is a finite limit of schemes proper over \(X\), hence proper over \(k\). Cover
it by a finite disjoint union of smooth proper \(k\)-schemes as above and take this to be the
non-degenerate \(n\)-th level. The resulting split simplicial scheme has smooth proper levels and
matching maps which are sdh-covers.
\end{proof}

\begin{lem}\label{lem:sdh-degree-zero}
Let \(K\) be a field, let \(X\) be a reduced scheme over \(K\), let \(\rho:X^{\mathrm{sn}}\to X\) be its
seminormalisation, and let \(\pi_{\bullet}:X_{\bullet}\to X\) be a smooth sdh-hypercover over \(K\). Then
\[
  R^{0}\pi_{\bullet,*}\cO_{X_{\bullet}}\cong \rho_{*}\cO_{X^{\mathrm{sn}}} \qquad\text{and}\qquad
  R^{0}\pi_{\bullet,*}\cO^{\times}_{X_{\bullet}}\cong \rho_{*}\cO^{\times}_{X^{\mathrm{sn}}}
\]
as sheaves on \(X_{\Zar}\). In particular, if \(X\) is seminormal, then
\[
  R^{0}\pi_{\bullet,*}\cO_{X_{\bullet}}\cong \cO_{X} \qquad\text{and}\qquad
  R^{0}\pi_{\bullet,*}\cO^{\times}_{X_{\bullet}}\cong \cO^{\times}_{X}.
\]
Moreover, for \(G_{\bullet}=\cO_{X_{\bullet}}\) or \(\cO^{\times}_{X_{\bullet}}\), the natural map
\[
  \bigl(R^{0}\pi_{\bullet,*}G_{\bullet}\bigr)\otimes_{\bbZ}\Ql
  \xrightarrow{\;\sim\;}
  R^{0}\pi_{\bullet,*}\bigl(G_{\bullet}\otimes_{\bbZ}\Ql\bigr)
\]
is an isomorphism of sheaves of \(\Ql\)-vector spaces on \(X_{\Zar}\). Thus, for \(X\) seminormal,
\[
  R^{0}\pi_{\bullet,*}\bigl(\cO_{X_{\bullet}}\otimes\Ql\bigr)\cong \cO_{X}\otimes\Ql
  \qquad\text{and}\qquad R^{0}\pi_{\bullet,*}\bigl(\cO^{\times}_{X_{\bullet}}\otimes\Ql\bigr)\cong
  \cO^{\times}_{X}\otimes\Ql.
\]
\end{lem}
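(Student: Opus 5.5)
Our approach is to identify \(R^{0}\pi_{\bullet,*}G_{\bullet}\) with an sdh-sheaf evaluated on opens of \(X\), and then to apply Proposition~\ref{prop:huber-kelly}. By definition, for a Zariski open \(U\subset X\), the group of sections \(\Gamma(U,R^{0}\pi_{\bullet,*}G_{\bullet})\) is the equaliser
\[
  \ker\bigl(G_{0}(\pi_{0}^{-1}U)\rightrightarrows G_{1}(\pi_{1}^{-1}U)\bigr),
\]
where \(G_{n}(V)\) denotes \(\Gamma(V,\cO)\), respectively \(\Gamma(V,\cO^{\times})\), for \(V\) open in \(X_{n}\). These are the values of the representable presheaves \(h_{\bbA^{1}}\), respectively \(h_{\bbG_{m}}\), on \(U_{0}:=\pi_{0}^{-1}U\) and \(U_{1}:=\pi_{1}^{-1}U\). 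The restriction \(U_{\bullet}=X_{\bullet}\times_{X}U\to U\) is again an sdh-hypercover, because covers and coskeleta are stable under base change. The levels \(U_{0},U_{1}\) are smooth over \(K\), hence seminormal. Proposition~\ref{prop:huber-kelly} then identifies \(h_{\bbA^{1}}(U_{n})\) with \((a_{\sdh}h_{\bbA^{1}})(U_{n})\), and likewise for \(\bbG_{m}\).

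For any sdh-sheaf \(\cG\) and any sdh-hypercover \(U_{\bullet}\to U\), we have \(\cG(U)=\ker(\cG(U_{0})\rightrightarrows\cG(U_{1}))\). The reason is that \(U_{0}\to U\) is a cover and \(U_{1}\to U_{0}\times_{U}U_{0}\) is a cover, so the separated-sheaf condition can be tested on \(U_{1}\) in place of the fibre product. Applying this to \(\cG=a_{\sdh}h_{\bbA^{1}}\) and \(a_{\sdh}h_{\bbG_{m}}\), and using Proposition~\ref{prop:huber-kelly} once more on \(U\), we obtain
\[
  \Gamma(U,R^{0}\pi_{\bullet,*}G_{\bullet})\cong\Gamma(U^{\mathrm{sn}},\cO_{U^{\mathrm{sn}}})\ \text{(resp.\ units)}.
\]
Seminormalisation commutes with open immersions, so \(U^{\mathrm{sn}}=\rho^{-1}(U)\). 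The identifications are natural in \(U\), and therefore glue to isomorphisms of sheaves \(R^{0}\pi_{\bullet,*}G_{\bullet}\cong\rho_{*}\cO_{X^{\mathrm{sn}}}\) (resp.\ \(\rho_{*}\cO^{\times}_{X^{\mathrm{sn}}}\)). The seminormal case is immediate because \(\rho\) is then the identity. The main obstacle is checking that the map obtained this way agrees with the natural pullback map \(\cO_{X}\to R^{0}\pi_{\bullet,*}\). This amounts to compatibility of Huber--Kelly's identification with the unit of sheafification, and it follows from functoriality of Proposition~\ref{prop:huber-kelly}.

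For the \(\Ql\)-statement, we compute sections on an open \(U\). By (\ref{para:tensor-Ql-sheaves}) applied on the noetherian schemes \(U_{n}\), we have \(\Gamma(U_{n},G_{n}\otimes\Ql)=G_{n}(U_{n})\otimes\Ql\). Since \(\Ql\) is flat over \(\bbZ\), tensoring preserves the equaliser, so
\[
  \Gamma\bigl(U,R^{0}\pi_{\bullet,*}(G_{\bullet}\otimes\Ql)\bigr)=\Gamma(U,R^{0}\pi_{\bullet,*}G_{\bullet})\otimes\Ql.
\]
Another application of (\ref{para:tensor-Ql-sheaves}) identifies the right-hand side with \(\Gamma(U,(R^{0}\pi_{\bullet,*}G_{\bullet})\otimes\Ql)\). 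The natural map therefore induces an isomorphism on sections over every open, hence is an isomorphism of sheaves. Combining this with the seminormal case gives the final displayed identities.
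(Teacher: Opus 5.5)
Your proposal is correct and follows essentially the same route as the paper. It uses the equaliser description of \(R^0\pi_{\bullet,*}G_\bullet\) over each open \(U\), the sdh-sheaf condition for \(U_\bullet\to U\) obtained from \(U_0\to U\) and the matching cover \(U_1\to U_0\times_U U_0\), Huber--Kelly on the smooth (hence seminormal) levels and on \(U\), and finally the section formula of (\ref{para:tensor-Ql-sheaves}) together with flatness of \(\Ql\) for the rationalised statement; your added remark on compatibility with the natural map \(\cO_X\to R^0\pi_{\bullet,*}\cO_{X_\bullet}\) is a point the paper leaves implicit.
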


\begin{proof}
For a Zariski open \(U\subset X\), put \(U_i=U\times_X X_i\). Degree-zero simplicial direct image is the
equalizer
\[
  \Gamma(U,R^0\pi_{\bullet,*}G_\bullet)
  =
  \operatorname{Eq}\bigl(\Gamma(U_0,G_0)\rightrightarrows\Gamma(U_1,G_1)\bigr).
\]
Apply this to \(G_\bullet=\cO_{X_\bullet}\) and \(\cO^\times_{X_\bullet}\). For
\(F=a_{\sdh}h_{\bbA^1}\) or \(a_{\sdh}h_{\bbG_m}\), the sdh-sheaf condition for the hypercover
\(U_\bullet\to U\) gives
\[
  F(U)=\operatorname{Eq}\bigl(F(U_0)\rightrightarrows F(U_1)\bigr).
\]
This is obtained from the sheaf condition for \(U_0\to U\) and the level-one matching cover
\(U_1\to U_0\times_U U_0\). The \(U_i\) are smooth, hence seminormal, so Proposition
\ref{prop:huber-kelly} identifies \(F(U_i)\) with ordinary functions or units on \(U_i\). Applying the
same proposition to \(U\) identifies \(F(U)\) with functions or units on \(U^{\mathrm{sn}}\). This gives
the two displayed sheaf isomorphisms, and the seminormal case follows by \(U^{\mathrm{sn}}=U\).

For the rationalised assertion, tensor the displayed equalizer with \(\Ql\). The section formula
(\ref{para:tensor-Ql-sheaves}) and flatness of \(\Ql\) over \(\bbZ\) identify it with the equalizer
computing \(R^0\pi_{\bullet,*}(G_\bullet\otimes\Ql)\), functorially in \(U\).
\end{proof}

\section{The smooth-proper case}
\label{sec:dlog-smooth-proper}

\begin{prop}\label{cor:cycle-class-smooth-proper}
Let \(Y\) be a smooth proper scheme over \(k\), and set \(\ol Y=Y\times_k\ol k\). Then:
\begin{enumerate}[label=\textup{(\roman*)},nosep]
\item the cycle class of (\ref{para:cycle-class}) induces a \(G_k\)-equivariant isomorphism
\[
  \widetilde{c}_{1}:\NS(\ol Y)\otimes\Ql
  \xrightarrow{\sim}
  \rH^1_{\Zar}(\ol Y,\cH^1),
\]
and the source and target are pure of weight \(0\);
\item the Leray edge gives a functorial isomorphism
\[
  \rH^0_{\Zar}(\ol Y,\cH^1)
  \cong
  \rH^1_{\et}(\ol Y,\Ql(1)),
\]
pure of weight \(-1\);
\item \(\rH^r_{\Zar}(\ol Y,\cH^1)=0\) for \(r\ge2\).
\end{enumerate}
All identifications are contravariantly functorial for morphisms of smooth proper \(k\)-schemes.
\end{prop}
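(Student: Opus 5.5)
Since everything is componentwise and \(\ol Y\) is smooth proper, hence a finite disjoint union of smooth proper varieties over \(\ol k\), we may assume \(\ol Y\) connected. The plan is to compute \(\rH^\ast_{\Zar}(\ol Y,\cH^1)\) through the Bloch--Ogus (Gersten) resolution of \(\cH^1\) and then compare with \(\Pic\) via \(d\log_{\ell}\).

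\emph{Step 1: Gersten resolution.} Bloch--Ogus theory for \(\ell\)-adic étale cohomology of the smooth \(\ol Y\) gives a flasque resolution
\[
  0\to\cH^1\to i_{\eta,*}\rH^1_{\et}(\ol k(Y),\Ql(1))\to\bigoplus_{x\in\ol Y^{(1)}}i_{x,*}\rH^0_{\et}(\kappa(x),\Ql)\to0 .
\]
It stops at codimension one because the codimension-\(c\) term involves \(\rH^{1-c}\), which vanishes for \(c\ge2\). Taking global sections, a length-two flasque resolution yields (iii) at once. It also identifies \(\rH^1_{\Zar}(\ol Y,\cH^1)\) with the cokernel of the residue map
\[
  \partial\colon\rH^1(\ol k(Y),\Ql(1))\to\bigoplus_{x}\Ql .
\]
To handle continuous coefficients we work with \(\Ql(1)\) in the pro-\'etale sense of (\ref{para:proet}), and obtain the Gersten resolution by passing to the limit over \(\mu_{\ell^n}\) and inverting \(\ell\).

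\emph{Step 2: compatibility with \(d\log\).} Compare this with the tensored Cartier-divisor resolution \(0\to\cO^\times\otimes\Ql\to\cM^\times\otimes\Ql\to\Div\otimes\Ql\to0\) from Lemma~\ref{lem:pic-via-cartier}. The map \(d\log_\ell\) sends it to the Gersten resolution: on the generic term it is the Kummer map \(\ol k(Y)^\times\otimes\Ql\to\rH^1(\ol k(Y),\Ql(1))\), and on the divisor term it is the identity, since the residue of a Kummer class is the valuation. Hence \(\widetilde c_1\) is the induced map on \(\rH^1\). The map is surjective, because the divisor terms coincide.

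For injectivity we use the commutative square with \(c_1\colon\NS(\ol Y)\otimes\Ql\hookrightarrow\rH^2_{\et}(\ol Y,\Ql(1))\), which is injective by the Kummer sequence (\ref{eq:integral-NS-ses}). Its other side is \(\iota_{\ol Y}\) of Lemma~\ref{lem:leray-edge}, and the square commutes by the construction in (\ref{para:cycle-class}). So \(\widetilde c_1\) is injective, proving the isomorphism in (i). Purity of weight \(0\) follows from (\ref{para:neron-severi}). \(G_k\)-equivariance and functoriality were established in (\ref{para:cycle-class}). Functoriality of the Gersten identifications themselves is never needed, since we only use \(\widetilde c_1\) and the Leray edge.

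\emph{Step 3: part (ii).} This is Lemma~\ref{lem:h1-proet-presheaf-is-sheaf} with \(U=\ol Y\); pro-\'etale cohomology agrees with \(\rH^1_{\et}\) by (\ref{para:proet}), and the identification is functorial as a Leray edge. Purity of weight \(-1\) comes from \(\rH^1_{\et}(\ol Y,\Ql)\) being pure of weight \(1\) (Deligne, Weil II, for \(Y\) smooth proper), shifted by the Tate twist.

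\emph{Main obstacle.} The delicate point is Step 1: justifying Bloch--Ogus/Gersten exactness with \(\Ql(1)\)-coefficients for Zariski sheafification of pro-\'etale cohomology, and checking that the residue maps agree with divisor valuations under \(d\log\). I would first prove both statements with \(\mu_{\ell^n}\)-coefficients, using Gabber's effacement over \(\ol k\). I would then pass to the limit using Mittag-Leffler on the finite groups involved, together with the Jannsen/Bhatt--Scholze comparison, and finally invert \(\ell\).
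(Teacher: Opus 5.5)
Your proposal is correct and follows essentially the paper's own route: the two-term Gersten (Bloch--Ogus) flasque resolution of \(\cH^1\) gives (iii) and \(\rH^1_{\Zar}(\ol Y,\cH^1)=\coker\partial\), which is matched with divisors via \(d\log_\ell\); (ii) is the Leray edge with flasque \(R^0\lambda_*\Ql(1)\); and the weights come from Deligne. The one variation is in (i): you get surjectivity from the morphism of resolutions (Cartier divisors to Gersten) and injectivity from the Kummer injectivity of the \'etale \(c_1\) through \(\iota_{\ol Y}\), where the paper quotes the Bloch--Ogus identification of \(E_2^{1,1}\) with \(\mathrm{CH}^1\otimes\Ql\), and as you note this also avoids needing functoriality of the Gersten complex itself.
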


\begin{proof}
All assertions are additive over the connected components of \(\ol Y\), each smooth and irreducible,
hence equidimensional, so the equidimensional statements cited below apply componentwise. The pro-étale
groups \(\rH^*(\ol Y,\Ql(1))\) form an \(\ell\)-adic twisted
Borel--Moore cohomology theory \cite[Proposition~6.6]{Schreieder23} which agrees with continuous \'{e}tale
cohomology \cite[Lemma~6.5]{Schreieder23}. For such a theory, the Bloch--Ogus coniveau formalism
\cite{BlochOgus74} identifies \(\rH^p_{\Zar}(\ol Y,\cH^q_{\ol Y}(\Ql(1)))\) with the \(E_2^{p,q}\)-term
of the coniveau spectral sequence \cite[\S7.11]{Schreieder23}. For the row \(q=1\), the Gersten resolution exhibits \(\cH^1\) as the
kernel of the Kummer valuation residue \(\partial\) in the two-term flasque complex
\[
  \bigoplus_{\eta\in\ol Y^{(0)}}(i_{\eta})_{*}\rH^{1}_{\et}(\kappa(\eta),\Ql(1))\;\xrightarrow{\ \partial\ }\;\bigoplus_{y\in\ol Y^{(1)}}(i_{y})_{*}\Ql,
\]
the constant sheaves on the generic points and the codimension-one skyscrapers, both flasque. Taking
\(\rH^{*}_{\Zar}\) of this length-one flasque resolution gives
\(\rH^{r}_{\Zar}(\ol Y,\cH^1)=0\) for \(r\ge2\), which is (iii). The degree-zero term
\(\rH^{0}_{\Zar}=\ker\partial\) is treated in (ii) below, and
\(\rH^{1}_{\Zar}(\ol Y,\cH^1)=\operatorname{coker}\partial=E_{2}^{1,1}\). By the
Bloch--Ogus identification of the diagonal coniveau term \(E_{2}^{1,1}\) with codimension-one cycles
modulo rational equivalence \cite{BlochOgus74} (in the pro-\'etale form of \cite[\S7.11]{Schreieder23}),
this cokernel is \(\mathrm{CH}^{1}(\ol Y)\otimes\Ql=\Pic(\ol Y)\otimes\Ql\). Concretely \(\partial\) sends
the \(d\log\) class of a rational function \(f\in\kappa(\eta)^{\times}\) to its divisor
\(\operatorname{div}(f)\), so the cokernel is the divisor class group, rationally. Since
\(\Pic^{0}(\ol Y)\otimes\Ql=0\) (Lemma~\ref{lem:torsion-ag}), this is \(\NS(\ol Y)\otimes\Ql\). The edge map is the first Chern class,
coinciding with the normalisation \(\widetilde{c}_{1}\) of (\ref{para:cycle-class}) by
\cite[Lemma~3.26]{Jannsen88} and \cite[Lemma~6.7]{Schreieder23}. This is (i). Functoriality follows from
functoriality of the coniveau spectral sequence and from Kummer functoriality of the first Chern class.

For (ii), use the Leray spectral sequence for \(\lambda_{\ol Y}:\ol Y_{\proet}\to\ol Y_{\Zar}\). On each
connected component \(R^0\lambda_{\ol Y,*}\Ql(1)\) is the constant Zariski sheaf \(\Ql(1)\), hence
flasque. The low-degree edge sequence therefore gives
\[
  \rH^1_{\et}(\ol Y,\Ql(1))
  \xrightarrow{\sim}
  \rH^0_{\Zar}(\ol Y,\cH^1).
\]
The weight assertions are Deligne's Weil II purity for \(\rH^1_{\et}(\ol Y,\Ql(1))\) and the
weight \(0\) Frobenius action on \(\NS(\ol Y)\otimes\Ql\) (\ref{para:neron-severi}).
\end{proof}

\section{The simplicial cycle class}
\label{sec:simp-cycle-class}

Throughout this section, fix a proper \(k\)-scheme \(X\) and a smooth proper hypercover
\(\pi_{\bullet}:X_{\bullet}\to X\). Here \(X_\bullet\) is a simplicial smooth proper
\(k\)-scheme and the augmentation is a proper hypercover of \(X\), so that proper cohomological descent
applies to the \(\ell\)-adic cohomology used below (for instance, a smooth proper sdh-hypercover as in
Proposition~\ref{prop:smooth-proper-sdh-hypercover}). Set
\(\ol{X}_{\bullet}:=X_{\bullet}\times_{k}\ol{k}\).

By Proposition~\ref{cor:cycle-class-smooth-proper}, the skeletal spectral sequence has only two rows:
\[
  \rH^0_{\Zar}(\ol X_a,\cH^1)=\rH^1_{\et}(\ol X_a,\Ql(1))
  \quad\text{of weight }-1,
\]
and
\[
  \rH^1_{\Zar}(\ol X_a,\cH^1)=\NS(\ol X_a)\otimes\Ql
  \quad\text{of weight }0.
\]

\begin{para}[Notation: \(\NS(\ol{X}_{\bullet})\otimes\Ql\)]\label{para:NS-X-bullet}
We adopt throughout this and the following sections the notation
\[
  \NS(\ol{X}_{\bullet})\otimes\Ql \;:=\; \operatorname{Eq}\Bigl(\NS(\ol{X}_{0})\otimes\Ql\rightrightarrows\NS(\ol{X}_{1})\otimes\Ql\Bigr),
\]
the equalizer of the alternating face-map pullbacks induced by the simplicial structure on
\(\ol{X}_{\bullet}\). This is a Frobenius submodule of \(\NS(\ol{X}_{0})\otimes\Ql\), which is pure of
weight \(0\) by (\ref{para:neron-severi}). Hence \(\NS(\ol{X}_{\bullet})\otimes\Ql\) is also pure of
weight \(0\)
(Lemma~\ref{lem:Frobenius-module}(ii)).
\end{para}

\begin{para}[Simplicial Zariski hypercohomology and \(R^{q}\pi_{\bullet,*}\)]\label{para:simp-setup}
For \(\tau\in\{\Zar,\et,\proet\}\), the simplicial \(\tau\)-topos \((\ol{X}_{\bullet})_{\tau}\) has, as
objects, simplicial sheaves \(\cG_{\bullet}=(\cG_{p})_{p\geq 0}\), where \(\cG_{p}\) is a sheaf on
\((\ol{X}_{p})_{\tau}\) and the simplicial structure is given by face/degeneracy pullback morphisms. The
structure maps \(\lambda_{p}:(\ol{X}_{p})_{\proet}\to(\ol{X}_{p})_{\Zar}\) and
\(\pi_{p}:\ol{X}_{p}\to\ol{X}\) assemble into morphisms of simplicial topoi \(\lambda_{\bullet}\) and
\(\pi_{\bullet}\). Following the conventions of (\ref{para:proet}), we write
\[
  \cH^{q}_{\bullet}(\Ql(1))\;:=\;R^{q}\lambda_{\bullet,*}\Ql(1),
\]
the simplicial Zariski sheaf with degree-\(p\) component \(\cH^{q}_{\ol{X}_{p}}(\Ql(1))\) and
face/degeneracy maps induced by functoriality of \(R^{q}\lambda_{*}\).

Unless another topology is explicitly displayed, \(R^{q}\pi_{\bullet,*}\) in this section means direct
image for the morphism of simplicial Zariski topoi \((\ol{X}_{\bullet})_{\Zar}\to\ol{X}_{\Zar}\).

For a simplicial Zariski sheaf \(\cG_{\bullet}\) on \(\ol{X}_{\bullet}\), hypercohomology is computed by
totalising an injective-resolution double complex in the simplicial and Zariski directions, equivalently
by the skeletal spectral sequence of the filtration by simplicial degree on the totalisation (the
cohomology of the simplicial Zariski topos, \cite[V\(_{\mathrm{bis}}\)]{SGA4_II})
\begin{equation}\label{eq:skeletal-SS}
  E_{1}^{p,q}=\rH^{q}_{\Zar}(\ol{X}_{p},\cG_{p})\;\Longrightarrow\;\bH^{p+q}_{\Zar}(\ol{X}_{\bullet},\cG_{\bullet}),
\end{equation}
with \(d_{1}:E_{1}^{p,q}\to E_{1}^{p+1,q}\) the alternating sum of face-map pullbacks. The same
construction with \(\rH^{q}_{\Zar}(\ol{X}_{p},-)\) replaced by \(R^{q}\pi_{p,*}\) gives a hypercohomology
spectral sequence
\begin{equation}\label{eq:Rqpi-SS}
  E_{1}^{p,q}=R^{q}\pi_{p,*}\cG_{p}\;\Longrightarrow\;R^{p+q}\pi_{\bullet,*}\cG_{\bullet}
\end{equation}
on \(\ol{X}_{\Zar}\). Concretely, \(R^{q}\pi_{\bullet,*}\cG_{\bullet}\) is the Zariski sheafification of
the presheaf
\[
  U\longmapsto\bH^{q}_{\Zar}\bigl(\pi_{\bullet}^{-1}(U),\cG_{\bullet}|_{\pi_{\bullet}^{-1}(U)}\bigr).
\]

The morphism of simplicial topoi
\(\lambda_{\bullet}\colon(\ol{X}_{\bullet})_{\proet}\to(\ol{X}_{\bullet})_{\Zar}\) of (\ref{para:proet})
has an exact pullback \(\lambda_{\bullet}^{*}\) (a left adjoint of \(\lambda_{\bullet,*}\)), so
\(\lambda_{\bullet,*}\) carries injective abelian sheaves on \((\ol{X}_{\bullet})_{\proet}\) to
injectives on \((\ol{X}_{\bullet})_{\Zar}\), in particular to \(\Gamma_{\ol{X}_{\bullet}}\)-acyclic
objects. Grothendieck's spectral sequence for the composition of right derived functors
\cite[Th\'{e}or\`{e}me~2.4.1]{Grothendieck-Tohuku} applied to \(R\Gamma_{(\ol{X}_{\bullet})_{\Zar}}\circ
R\lambda_{\bullet,*}=R\Gamma_{(\ol{X}_{\bullet})_{\proet}}\), evaluated on \(\Ql(1)\), yields the
\emph{simplicial Leray spectral sequence}
\begin{equation}\label{eq:simp-Leray-SS}
  E_{2}^{a,b}\;=\;\rH^{a}_{\Zar}\bigl(\ol{X}_{\bullet},\cH^{b}_{\bullet}(\Ql(1))\bigr)\;\Longrightarrow\;\bH^{a+b}_{\proet}\bigl(\ol{X}_{\bullet},\Ql(1)\bigr)\;=\;\bH^{a+b}_{\et}\bigl(\ol{X}_{\bullet},\Ql(1)\bigr),
\end{equation}
where the abutment identification is (\ref{para:proet}). The spectral sequence is Frobenius-equivariant
because \(\lambda_{\bullet}\) and \(R\Gamma_{(\ol{X}_{\bullet})_{\Zar}}\) commute with the
\(G_{k}\)-action of (\ref{para:galois}).

For the proper hypercoverings used here, the descent input is
\cite[V\(_{\mathrm{bis}}\), Proposition~4.3.2]{SGA4_II}, which establishes that a proper
surjective morphism of schemes is of universal \(F_{n}\)-2-cohomological descent for
\(\bbZ/\ell^{n}\)-coefficients on the \'{e}tale site. The hypercover construction of
\cite[V\(_{\mathrm{bis}}\), Section~5]{SGA4_II} then gives descent for the proper hypercover. Applying this
in each degree to \(\mu_{\ell^{n}}\), compatibly in \(n\), then passing to the limit with the pro-\'etale form
of proper base change for \(\Ql(1)\) \cite[Lemma~6.7.5]{BS15}, gives
\[
  \pi_{\bullet}^{*}:\rH^{n}_{\et}(\ol{X},\Ql(1))\xrightarrow{\;\sim\;}\bH^{n}_{\et}(\ol{X}_{\bullet},\Ql(1)).
\]
This isomorphism is \(G_{k}\)-equivariant. The \(k\)-automorphism
\(\sigma_{\ol{X}_{\bullet}}=\mathrm{id}_{X_{\bullet}}\times\sigma\) acts on \(\pi_{\bullet}\) and on the
displayed map by the same naturality argument used in (\ref{para:cycle-class}). The comparison of
pro-\'etale, continuous \'etale, and \(\ell\)-adic cohomology after \(\otimes\Ql\) (\ref{para:proet})
applies levelwise to each scheme \(\ol{X}_p\), compatibly with the face and degeneracy maps, and so passes
to the skeletal total complexes used here and in \S\ref{sec:auxiliary-sheaf-F}.
\end{para}

\begin{prop}\label{prop:simp-decomp}
There is a Frobenius-equivariant short exact sequence
\begin{equation}\label{eq:simp-weight-ses}
0\to W\to
  \bH^{1}_{\Zar}(\ol{X}_{\bullet},\cH^{1}_{\bullet}(\Ql(1)))
  \to \NS(\ol{X}_{\bullet})\otimes\Ql\to0,
\end{equation}
where \(W\) is pure of weight \(-1\). Consequently the edge map restricts to an isomorphism
\[
  \bH^{1}_{\Zar}(\ol{X}_{\bullet},\cH^{1}_{\bullet}(\Ql(1)))^{w=0}
  \xrightarrow{\ \sim\ }\NS(\ol{X}_{\bullet})\otimes\Ql .
\]
\end{prop}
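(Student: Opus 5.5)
We will compute \(\bH^{1}_{\Zar}(\ol{X}_{\bullet},\cH^{1}_{\bullet}(\Ql(1)))\) with the skeletal spectral sequence \eqref{eq:skeletal-SS} applied to \(\cG_\bullet=\cH^1_\bullet(\Ql(1))\). By Proposition~\ref{cor:cycle-class-smooth-proper}, \(E_1^{p,q}=\rH^q_{\Zar}(\ol X_p,\cH^1)\) vanishes for \(q\ge2\). Only the rows \(q=0\) and \(q=1\) survive: \(E_1^{p,0}=\rH^1_{\et}(\ol X_p,\Ql(1))\) is pure of weight \(-1\), and \(E_1^{p,1}=\NS(\ol X_p)\otimes\Ql\) is pure of weight \(0\). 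The identifications are contravariantly functorial, so the \(d_1\)-differentials (alternating sums of face pullbacks) on each row are the corresponding maps on \(\rH^1_{\et}\) and on \(\NS\otimes\Ql\). The whole spectral sequence is Frobenius-equivariant by (\ref{para:galois}) applied levelwise.

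Next we read off the abutment in total degree \(1\). Its abutment filtration has two graded pieces, \(E_\infty^{1,0}\) and \(E_\infty^{0,1}\), giving a Frobenius-equivariant short exact sequence
\[
0\to E_\infty^{1,0}\to \bH^{1}_{\Zar}(\ol{X}_{\bullet},\cH^{1}_{\bullet}(\Ql(1)))\to E_\infty^{0,1}\to 0 .
\]
We set \(W:=E_\infty^{1,0}\). It is a subquotient of \(E_1^{1,0}=\rH^1_{\et}(\ol X_1,\Ql(1))\), hence pure of weight \(-1\) by Lemma~\ref{lem:Frobenius-module}(ii). For the other piece, \(E_2^{0,1}=\ker(d_1\colon E_1^{0,1}\to E_1^{1,1})\), which is exactly \(\NS(\ol X_\bullet)\otimes\Ql\) as in (\ref{para:NS-X-bullet}). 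There are no incoming differentials into \((0,1)\). The outgoing differentials are \(d_r\colon E_r^{0,1}\to E_r^{r,2-r}\) for \(r\ge2\). The case \(r=2\) lands in \(E_2^{2,0}\), a subquotient of \(E_1^{2,0}\), which is pure of weight \(-1\). For \(r\ge3\) the target has negative second index and vanishes.

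The main point, and the only genuine obstacle, is killing \(d_2\colon E_2^{0,1}\to E_2^{2,0}\). This is exactly where the weight separation is used. The source is a submodule of \(\NS(\ol X_0)\otimes\Ql\), so it is pure of weight \(0\); the target is pure of weight \(-1\). The map \(d_2\) is Frobenius-equivariant, and all terms are finite-dimensional Frobenius modules because each \(\rH^*_{\et}(\ol X_p,\Ql(1))\) and \(\NS\otimes\Ql\) is. Lemma~\ref{lem:Frobenius-module}(iii) therefore forces \(d_2=0\). Hence \(E_\infty^{0,1}=\NS(\ol X_\bullet)\otimes\Ql\), which gives \eqref{eq:simp-weight-ses}.

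Finally, for the isomorphism on weight-zero parts we apply the exact functor \(V\mapsto V^{w=0}\) of Lemma~\ref{lem:Frobenius-module}(iv) to \eqref{eq:simp-weight-ses}. This gives \(W^{w=0}=0\), since \(W\) is pure of weight \(-1\), and \((\NS(\ol X_\bullet)\otimes\Ql)^{w=0}=\NS(\ol X_\bullet)\otimes\Ql\), since it is pure of weight \(0\). The edge map therefore restricts to the claimed isomorphism. One preliminary check is needed: that \(\bH^1\) is itself a finite-dimensional Frobenius module, so that weight parts make sense. This follows because it is an extension of two such modules.
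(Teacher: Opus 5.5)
Your proposal is correct and follows the paper's proof essentially verbatim. Both use the two-row skeletal spectral sequence, identify \(E_2^{0,1}\) with \(\NS(\ol X_\bullet)\otimes\Ql\), and kill \(d_2\colon E_2^{0,1}\to E_2^{2,0}\) by weight separation (Lemma~\ref{lem:Frobenius-module}(iii)). The differences are cosmetic: you read the extension off the abutment filtration with \(W=E_\infty^{1,0}\) (which equals the paper's \(E_2^{1,0}\)) rather than from the five-term sequence, and you make explicit that \(\bH^1\) is a Frobenius module because it is an extension of two such modules.
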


\begin{proof}
Apply the skeletal spectral sequence \eqref{eq:skeletal-SS} to
\(\cG_\bullet=\cH^1_\bullet(\Ql(1))\):
\[
E_1^{a,b}=\rH^b_{\Zar}(\ol X_a,\cH^1)
\Longrightarrow
\bH^{a+b}_{\Zar}(\ol X_\bullet,\cH^1_\bullet(\Ql(1))).
\]
By Proposition~\ref{cor:cycle-class-smooth-proper}, only the rows \(b=0,1\) occur. The row \(b=1\) is
\(\NS(\ol X_a)\otimes\Ql\), pure of weight \(0\), with \(d_1\) the alternating sum of face-map pullbacks.
Thus
\[
E_2^{0,1}=\operatorname{Eq}\bigl(\NS(\ol X_0)\otimes\Ql\rightrightarrows
\NS(\ol X_1)\otimes\Ql\bigr)=\NS(\ol X_\bullet)\otimes\Ql .
\]
The row \(b=0\) is \(\rH^1_{\et}(\ol X_a,\Ql(1))\), pure of weight \(-1\), so every subquotient
\(E_r^{a,0}\) is pure of weight \(-1\). The only possible higher differential affecting total degree
\(1\) is
\[
d_2:E_2^{0,1}\longrightarrow E_2^{2,0},
\]
and it is zero because there are no non-zero maps from weight \(0\) to weight \(-1\). The five-term exact
sequence therefore gives
\[
0\to E_2^{1,0}\to
\bH^1_{\Zar}(\ol X_\bullet,\cH^1_\bullet(\Ql(1)))
\to E_2^{0,1}\to0.
\]
Set \(W:=E_2^{1,0}\). The last display is \eqref{eq:simp-weight-ses}. The weight-zero conclusion follows
by applying the exact functor \(V\mapsto V^{w=0}\).
\end{proof}

\begin{prop}\label{prop:mu}
The levelwise cycle classes induce a Frobenius-equivariant map
\[
  \mu:\NS(\ol X_\bullet)\otimes\Ql\longrightarrow
  \bH^1_{\Zar}(\ol X_\bullet,\cH^1_\bullet(\Ql(1))).
\]
It is injective, identifies its image with the weight-\(0\) subspace, and has cokernel pure of weight
\(-1\).
\end{prop}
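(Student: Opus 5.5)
The plan is to construct \(\mu\) as a section of the edge map in Proposition~\ref{prop:simp-decomp}, built from the levelwise cycle classes, and then read off all three assertions from the short exact sequence \eqref{eq:simp-weight-ses}.

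\emph{Construction of \(\mu\).} Use the skeletal spectral sequence \eqref{eq:skeletal-SS} for the simplicial sheaf \(\cO^\times_{\ol X_\bullet}\otimes\Ql\). By Lemma~\ref{lem:pic-via-cartier}, each \(\ol X_a\) is regular, so its \(E_1\)-page has only the rows \(b=0,1\): the row \(b=0\) is \(\Gamma(\ol X_a,\cO^\times)\otimes\Ql\), and the row \(b=1\) is \(\Pic(\ol X_a)\otimes\Ql=\NS(\ol X_a)\otimes\Ql\). The levelwise maps \(d\log_{\ell,\ol X_a}\) are compatible with face and degeneracy maps by the functoriality in (\ref{para:cycle-class}). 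Hence they assemble into a morphism of simplicial Zariski sheaves
\[
d\log_\bullet:\cO^\times_{\ol X_\bullet}\otimes\Ql\to\cH^1_\bullet(\Ql(1)),
\]
and so into a map of skeletal spectral sequences and a map on \(\bH^1_{\Zar}\).

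In the source, \(\Gamma(\ol X_a,\cO^\times)\otimes\Ql=0\). Indeed \(\ol X_a\) is proper, so these global units are \(\ol k^{\times}\) on each component, which is torsion (Lemma~\ref{lem:torsion-ag}). The source sequence therefore collapses to
\[
\bH^1_{\Zar}(\ol X_\bullet,\cO^\times\otimes\Ql)\cong E_2^{0,1}=\NS(\ol X_\bullet)\otimes\Ql.
\]
Define \(\mu\) as the composite of the inverse of this isomorphism with \(\bH^1(d\log_\bullet)\). Frobenius-equivariance follows from the \(G_k\)-equivariance of \(\widetilde c_1\) in (\ref{para:cycle-class}).

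\emph{Identification with a section.} Compose \(\mu\) with the edge map \(\bH^1_{\Zar}(\ol X_\bullet,\cH^1_\bullet)\to E_2^{0,1}\) of Proposition~\ref{prop:simp-decomp}. Naturality of the skeletal spectral sequence shows that this composite is the map on \(E_2^{0,1}\) induced by the levelwise \(\widetilde c_1\) on the \(b=1\) row. By Proposition~\ref{cor:cycle-class-smooth-proper}(i), that map is the identity on \(\NS(\ol X_\bullet)\otimes\Ql\) under the identification made there. So \(\mu\) splits the surjection in \eqref{eq:simp-weight-ses}, and in particular \(\mu\) is injective.

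\emph{Weights.} Its image is a Frobenius submodule isomorphic to the pure weight-\(0\) module \(\NS(\ol X_\bullet)\otimes\Ql\) (\ref{para:NS-X-bullet}). Since \(W\) is pure of weight \(-1\), the splitting gives
\[
\bH^1_{\Zar}(\ol X_\bullet,\cH^1_\bullet(\Ql(1)))=W\oplus\operatorname{im}\mu.
\]
By Lemma~\ref{lem:Frobenius-module}(i), the image of \(\mu\) is exactly the weight-\(0\) part, and \(\coker\mu\cong W\) is pure of weight \(-1\).

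\emph{Main obstacle.} The delicate point is the compatibility claim: that the edge-map composite really recovers the levelwise \(\widetilde c_1\) on \(E_2^{0,1}\). This requires the identification \(\rH^1_{\Zar}(\ol X_a,\cO^\times\otimes\Ql)=\Pic\otimes\Ql\) of Lemma~\ref{lem:pic-via-cartier} to agree with the one used to define \(\widetilde c_1\), functorially in \(a\). I would check this directly on the \(E_1\)-level, using that both maps are induced by applying \(\rH^1_{\Zar}(\ol X_a,-)\) to the same sheaf map \(d\log\).
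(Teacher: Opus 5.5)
Your proposal is correct and follows essentially the same route as the paper. Like the paper, you build \(\mu\) from the simplicial sheaf map \(\cO^\times_{\ol X_\bullet}\otimes\Ql\to\cH^1_\bullet(\Ql(1))\) induced by \(d\log_\ell\), collapse the source skeletal spectral sequence to \(E_2^{0,1}=\NS(\ol X_\bullet)\otimes\Ql\), show that \(\mu\) splits the edge map of \eqref{eq:simp-weight-ses} via the levelwise isomorphism of Proposition~\ref{cor:cycle-class-smooth-proper}(i), and read off the weight assertions from \(W\) being pure of weight \(-1\); the compatibility you flag as the main obstacle is exactly the step the paper treats as immediate from the definition of \(\widetilde c_1\) in (\ref{para:cycle-class}).
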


\begin{proof}
The \(d\log_\ell\) map of (\ref{para:cycle-class}) induces a morphism of simplicial Zariski sheaves
\(\cO^\times_{\ol X_\bullet}\otimes\Ql\to\cH^1_\bullet(\Ql(1))\). Let \(\mu\) be the map it induces on
hypercohomology,
\[
  \mu\colon\bH^1_{\Zar}(\ol X_\bullet,\cO^\times_{\ol X_\bullet}\otimes\Ql)\longrightarrow\bH^1_{\Zar}(\ol X_\bullet,\cH^1_\bullet(\Ql(1))).
\]
By the functoriality and \(G_k\)-equivariance of \(d\log_\ell\) (\ref{para:cycle-class}), \(\mu\) is
Frobenius-equivariant. Its source is \(\NS(\ol X_\bullet)\otimes\Ql\). In the skeletal spectral sequence \eqref{eq:skeletal-SS}
for \(\cO^\times_{\ol X_\bullet}\otimes\Ql\) the row \(b=0\) vanishes (global units on the smooth proper
levels are finite products of \(\ol k^{\times}\), which vanish after \(\otimes\Ql\) by Lemma~\ref{lem:torsion-ag}), the row \(b=1\) is \(\Pic(\ol X_a)\otimes\Ql=\NS(\ol
X_a)\otimes\Ql\) (Lemma~\ref{lem:pic-via-cartier} and (\ref{para:neron-severi})), and \(b\geq2\) vanishes
(Lemma~\ref{lem:pic-via-cartier}). Hence \(\bH^1=E_2^{0,1}=\NS(\ol X_\bullet)\otimes\Ql\)
(\ref{para:NS-X-bullet}).

Composed with the edge map of \eqref{eq:simp-weight-ses}, \(\mu\) is levelwise the cycle-class
isomorphism \(\NS(\ol X_a)\otimes\Ql\xrightarrow{\sim}\rH^1_{\Zar}(\ol X_a,\cH^1)\) of
Proposition~\ref{cor:cycle-class-smooth-proper}, so on the equalizers it is the identity of
\(E_2^{0,1}=\NS(\ol X_\bullet)\otimes\Ql\).
Thus \(\mu\) is a section of that edge map, in particular injective. Its source is pure of weight \(0\),
so \(\operatorname{im}\mu\) lies in the weight-\(0\) part of the target, and as the kernel \(W\) of
\eqref{eq:simp-weight-ses} is pure of weight \(-1\), the exact functor \(V\mapsto V^{w=0}\) makes the edge
map restrict to an isomorphism from that weight-\(0\) part onto \(E_2^{0,1}\). Hence
\(\operatorname{im}\mu\) is exactly the weight-\(0\) part, and \(\coker\mu\cong W\) is pure of weight
\(-1\).
\end{proof}

\section{The auxiliary sheaf \texorpdfstring{\(\cF\)}{F}}
\label{sec:auxiliary-sheaf-F}

Retain the standing setup of \S\ref{sec:simp-cycle-class}. Here \(X\) is a proper \(k\)-scheme,
\(\pi_\bullet:X_\bullet\to X\) a smooth proper hypercover in the sense fixed there, and
\(\ol{X}_\bullet:=X_\bullet\times_k\ol{k}\).

\begin{lem}\label{lem:F2-weight}
In the simplicial Leray spectral sequence for
\(\lambda_\bullet:(\ol{X}_\bullet)_\proet\to(\ol{X}_\bullet)_\Zar\),
\[
  E_2^{a,b}=\rH^a_\Zar(\ol{X}_\bullet,\cH^b_\bullet(\Ql(1)))\Longrightarrow
  \bH^{a+b}_\et(\ol{X}_\bullet,\Ql(1)),
\]
the filtration piece \(F^2\bH^2_\et(\ol{X}_\bullet,\Ql(1))=E_\infty^{2,0}\) is pure of weight \(-2\).
\end{lem}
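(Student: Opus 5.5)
Since \(F^3=0\) on \(\bH^2\), the piece \(F^2\) equals \(E_\infty^{2,0}\), a subquotient of \(E_2^{2,0}=\rH^2_\Zar(\ol X_\bullet,\cH^0_\bullet(\Ql(1)))\). By Lemma~\ref{lem:Frobenius-module}(ii) it therefore suffices to show that \(E_2^{2,0}\) is a Frobenius module pure of weight \(-2\). All differentials are Frobenius-equivariant by (\ref{para:simp-setup}), so subquotients inherit the Frobenius structure.

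First I identify the sheaf \(\cH^0_\bullet(\Ql(1))=R^0\lambda_{\bullet,*}\Ql(1)\). Levelwise, \(\ol X_p\) is a finite disjoint union of smooth proper, hence connected-component-wise irreducible, \(\ol k\)-schemes. As in the proof of Proposition~\ref{prop:ZL-frobenius-comparison}, \(R^0\lambda_{\ol X_p,*}\Ql(1)\) is therefore the constant Zariski sheaf \(\Ql(1)\) on \(\ol X_p\), and it is flasque because each component is irreducible. Thus \(\cH^0_\bullet(\Ql(1))\) is the simplicial constant sheaf \(\Ql(1)\), with face and degeneracy maps given by the identity on each component.

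Next I compute \(E_2^{2,0}\) via the skeletal spectral sequence \eqref{eq:skeletal-SS}. Flasqueness kills all rows \(q\ge1\): \(\rH^q_\Zar(\ol X_p,\Ql(1))=0\) for \(q\geq1\). The spectral sequence therefore degenerates to
\[
  \rH^a_\Zar(\ol X_\bullet,\Ql(1))\cong \rH^a\bigl(\Ql(1)^{\pi_0(\ol X_\bullet)}\bigr),
\]
the cohomology of the cochain complex \(p\mapsto\rH^0_\Zar(\ol X_p,\Ql(1))=\Ql(1)^{\pi_0(\ol X_p)}\) with alternating face-map differentials. Each term of this complex is a finite direct sum of copies of \(\Ql(1)\), on which the geometric Frobenius acts by \(q^{-1}\) times a permutation matrix, since Frobenius permutes the components of \(\ol X_p\). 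All eigenvalues therefore have absolute value \(q^{-1}\) under every embedding, so each term is pure of weight \(-2\). Alternatively, one can apply Lemma~\ref{lem:Zar-Ql-weight} levelwise, since the \(X_p\) are proper over \(k\).

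By Lemma~\ref{lem:Frobenius-module}(ii), the cohomology of a complex of pure weight-\((-2)\) Frobenius modules is again pure of weight \(-2\). Hence \(E_2^{2,0}\) is pure of weight \(-2\), and so is its subquotient \(E_\infty^{2,0}=F^2\).

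The only real check is that the Frobenius action on \(E_2^{2,0}\) obtained from the skeletal description agrees with the one coming from the simplicial Leray spectral sequence. This compatibility follows from the \(G_k\)-equivariance recorded in (\ref{para:simp-setup}), so I expect no serious obstacle. Finite dimensionality, needed for the terms to be Frobenius modules, is clear because every term is a finite sum of copies of \(\Ql(1)\).
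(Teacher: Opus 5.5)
Your proposal is correct and follows the paper's proof essentially step for step. You identify \(F^2=E_\infty^{2,0}\) as a subquotient of \(E_2^{2,0}=\rH^2_\Zar(\ol X_\bullet,\Ql(1))\), collapse the skeletal spectral sequence by componentwise flasqueness of the constant sheaf on the smooth levels, and note that each term \(\Ql(1)^{\pi_0(\ol X_p)}\) is pure of weight \(-2\), because Frobenius acts by a finite permutation of components twisted by \(q^{-1}\).
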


\begin{proof}
Since \(F^3=E_\infty^{3,-1}=0\), \(F^2=E_\infty^{2,0}\), a subquotient of
\[
  E_2^{2,0}\;=\;\rH^2_\Zar(\ol{X}_\bullet,\cH^0_\bullet(\Ql(1)))\;=\;\rH^2_\Zar(\ol{X}_\bullet,\Ql(1)),
\]
where the second equality uses \(\cH^0_{\ol{X}_p}(\Ql(1))=R^0\lambda_{\ol{X}_p,*}\Ql(1)=\Ql(1)\), the
locally constant sheaf of value \(\Ql(1)\) on each connected component of \(\ol{X}_p\) (proof of
Lemma~\ref{lem:h1-proet-presheaf-is-sheaf}, applied componentwise, with \(G_{\ol{k}}\) trivial).

Compute \(\rH^2_\Zar(\ol{X}_\bullet,\Ql(1))\) via the skeletal spectral sequence \eqref{eq:skeletal-SS}
with \(\cG_\bullet=\Ql(1)\), so \(E_1^{p,q}=\rH^q_\Zar(\ol{X}_p,\Ql(1))\). Each connected component of
\(\ol{X}_p\) is irreducible noetherian, so \(\Ql(1)\) is flasque componentwise. Hence
\(\rH^q_\Zar(\ol{X}_p,\Ql(1))=0\) for \(q\geq 1\), and only the \(q=0\) row survives:
\[
  E_1^{p,0}\;=\;\rH^0_\Zar(\ol{X}_p,\Ql(1))\;=\;\Ql(1)^{\pi_0(\ol{X}_p)}.
\]
The geometric Frobenius permutes the finitely many connected components of \(\ol{X}_p\) through a finite
quotient, and acts on \(\Ql(1)\) by weight \(-2\). Hence each \(E_1^{p,0}\) is a Frobenius module pure of
weight \(-2\), and so is every cohomology subquotient of the complex \(C^\bullet=E_1^{\bullet,0}\)
(Lemma~\ref{lem:Frobenius-module}(ii)). In particular
\(\rH^2_\Zar(\ol{X}_\bullet,\Ql(1))=\rH^2(C^\bullet)\) is pure of weight \(-2\), and so is its
subquotient \(E_\infty^{2,0}\).
\end{proof}

\begin{para}[The auxiliary sheaf \(\cF\) and the augmentation \(u\)]\label{para:F-and-u}
Following the conventions of (\ref{para:simp-setup}), define
\[
  \cF\;:=\;R^0\pi_{\bullet,*}\cH^1_\bullet(\Ql(1))
  \;=\;\operatorname{Eq}\Bigl(\pi_{0,*}\cH^1_0(\Ql(1))\rightrightarrows \pi_{1,*}\cH^1_1(\Ql(1))\Bigr),
\]
a Zariski sheaf on \(\ol{X}\). The simplicial morphism \(\pi_\bullet:\ol{X}_\bullet\to\ol{X}\) furnishes
a base-change map \(\beta:\pi_\bullet^*\cH^1_{\ol{X}}(\Ql(1))\to \cH^1_\bullet(\Ql(1))\) of simplicial
Zariski sheaves on \(\ol{X}_\bullet\). Its adjoint under \((\pi_\bullet^*,\pi_{\bullet,*})\) is a
morphism of Zariski sheaves on \(\ol{X}\),
\[
  u:\;\cH^1_{\ol{X}}(\Ql(1))\;\longrightarrow\;\cF,
\]
the \emph{augmentation}. Since \(\pi_\bullet\) and \(\lambda_\bullet\) are \(G_k\)-equivariant
(\ref{para:simp-setup}), \(\cF\) is a \(G_k\)-equivariant Zariski sheaf, so \(\rH^1_\Zar(\ol{X},\cF)\)
carries a Frobenius action. The injective Leray edge
\(j_1\colon\rH^1_\Zar(\ol{X},\cF)\hookrightarrow\bH^1_\Zar(\ol{X}_\bullet,\cH^1_\bullet(\Ql(1)))\) for
\(\pi_\bullet\) (five-term sequence) embeds it into the Frobenius module of
Proposition~\ref{prop:simp-decomp}. Thus \(\rH^1_\Zar(\ol{X},\cF)\) is a Frobenius module, with weight
grading induced by \(j_1\).
\end{para}

\begin{prop}\label{prop:u-inj}
Assume \(X\) is a proper scheme over \(k\), and let
\(\pi_\bullet:X_\bullet\to X\) be a smooth proper hypercover in the sense of
\S\ref{sec:simp-cycle-class}. With \(\cF\) and \(u\) defined by
(\ref{para:F-and-u}), the map induced by \(u\) on cohomology
\[
  u_*:\;\rH^1_{\Zar}(\ol{X},\cH^1)\;\longrightarrow\;\rH^1_\Zar(\ol{X},\cF)
\]
restricts to an injection on weight-zero subspaces:
\[
  (u_*)^{w=0}:\;\rH^1_{\Zar}(\ol{X}, \cH^1)^{w=0}\;\lhook\joinrel\longrightarrow\;\rH^1_\Zar(\ol{X},\cF)^{w=0}.
\]
\end{prop}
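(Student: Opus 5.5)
The plan is to compare the two Leray spectral sequences, the one for \(\lambda_{\ol X}\) and the simplicial one \eqref{eq:simp-Leray-SS} for \(\lambda_\bullet\), through cohomological descent \(\pi_\bullet^*\), and then to use weights to rule out a non-zero kernel.

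\emph{Step 1: a map of spectral sequences.} The commutative square of topoi (\(\pi_\bullet\) on both the pro-\'etale and Zariski sides, with \(\lambda_{\ol X}\) and \(\lambda_\bullet\) vertical) yields a morphism from the Leray spectral sequence \eqref{eq:base-leray-SS} of \(\ol X\) to the simplicial Leray spectral sequence \eqref{eq:simp-Leray-SS}. On abutments this morphism is \(\pi_\bullet^*\), and it respects the abutment filtrations \(F^\bullet\).

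On \(E_2^{1,1}\) the morphism is the composite
\[
  \rH^1_\Zar(\ol X,\cH^1)\xrightarrow{u_*}\rH^1_\Zar(\ol X,\cF)\xrightarrow{j_1}\bH^1_\Zar(\ol X_\bullet,\cH^1_\bullet(\Ql(1))).
\]
To justify this, I would factor the map through \(R\pi_{\bullet,*}\) and use that \(u\) is the adjoint of the base-change map \(\beta\). The Leray edge \(j_1\) is then the map \(\rH^1(\ol X,R^0\pi_{\bullet,*})\to\bH^1(\ol X_\bullet,-)\). All maps are Frobenius-equivariant by (\ref{para:galois}) and (\ref{para:simp-setup}).

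I expect this compatibility to be the main obstacle. One must check carefully that the two descriptions of the \(E_2^{1,1}\)-map agree, namely via \(u\) and via functoriality of \(R\Gamma_\Zar\circ R\lambda_*\). I would verify it on the level of the filtered complexes \(R\Gamma_\Zar(\ol X,\tau_{\le\bullet}R\lambda_*\Ql(1))\), pulled back along \(\pi_\bullet\).

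\emph{Step 2: lifting a weight-zero kernel element.} Let \(\alpha\in\rH^1_\Zar(\ol X,\cH^1)^{w=0}\) with \(u_*\alpha=0\). By Proposition~\ref{prop:ZL-frobenius-comparison}, \(\theta_X\) is an isomorphism on weight-zero parts. Hence \(\alpha=\theta_X(b)\) for a unique \(b\in(F^1\rH^2_\et(\ol X,\Ql(1)))^{w=0}\).

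Set \(b_\bullet=\pi_\bullet^*b\in F^1\bH^2_\et(\ol X_\bullet,\Ql(1))\). By Step 1, its image in the simplicial \(E_\infty^{1,1}\subseteq E_2^{1,1}\) is \(j_1u_*\alpha=0\). Here \(E_\infty^{1,1}\) injects into \(E_2^{1,1}\) because there are no incoming differentials at \((1,1)\).

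\emph{Step 3: concluding by weights.} It follows that \(b_\bullet\in F^2\bH^2_\et(\ol X_\bullet,\Ql(1))\), which is pure of weight \(-2\) by Lemma~\ref{lem:F2-weight}. Since \(\pi_\bullet^*\) is Frobenius-equivariant, \(b_\bullet\) also lies in the weight-zero part. By Lemma~\ref{lem:Frobenius-module}, \(b_\bullet=0\). Cohomological descent (\ref{para:simp-setup}) makes \(\pi_\bullet^*\) an isomorphism, so \(b=0\) and therefore \(\alpha=\theta_X(b)=0\).

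This proves that \((u_*)^{w=0}\) is injective. The weight grading on \(\rH^1_\Zar(\ol X,\cF)\) is the one induced by \(j_1\), so the restriction to weight-zero parts is well defined by exactness of \(V\mapsto V^{w=0}\).
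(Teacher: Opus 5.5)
Your proposal is correct and follows the paper's proof essentially step for step. It uses the same morphism of Leray spectral sequences with \(E_2^{1,1}\)-component \(j_1\circ u_*=\beta_*\circ\pi_\bullet^*\), the same weight-zero lift to \(F^1\rH^2_{\et}(\ol X,\Ql(1))\), and the same conclusion via Lemma~\ref{lem:F2-weight} and cohomological descent. The one difference is packaging: you obtain the lift by citing the weight-zero isomorphism \(\theta_X\) of Proposition~\ref{prop:ZL-frobenius-comparison}, while the paper re-derives it inline from the vanishing of \(d_2\) on weight zero and the purity of \(F^2\). That proposition is stated for reduced \(X\), so you should add that its proof never uses reducedness, or pass to \(X_{\mathrm{red}}\).
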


\begin{proof}
Let \(\alpha\in\rH^1_{\Zar}(\ol{X}, \cH^1)^{w=0}=E_2^{1,1}(\ol{X})^{w=0}\) in the Leray spectral sequence for
\(\lambda_{\ol{X}}\colon\ol{X}_\proet\to\ol{X}_\Zar\), and assume \(u_*(\alpha)=0\).

The augmentation is adjoint to the base-change map
\(\beta:\pi_\bullet^*\cH^1_{\ol X}(\Ql(1))\to\cH^1_\bullet(\Ql(1))\). Functoriality of the Leray edge for
the adjunction \((\pi_\bullet^*,\pi_{\bullet,*})\) gives
\begin{equation}\label{eq:u-adj}
  j_1\circ u_*\;=\;\beta_*\circ\pi_\bullet^*:\;\rH^1_\Zar(\ol{X},\cH^1)\to \bH^1_\Zar(\ol{X}_\bullet,\cH^1_\bullet(\Ql(1))),
\end{equation}
where \(j_1\) is the Leray edge for \(\pi_\bullet\), injective by the five-term sequence. Thus
\[
  \eta:=\beta_*\pi_\bullet^*\alpha\in E_2^{1,1}(\ol X_\bullet)
\]
is zero.

The Zariski sheaf \(R^{0}\lambda_{\ol{X},*}\Ql(1)\) on \(\ol{X}_{\Zar}\) is the constant sheaf
\(\Ql(1)\). On any open \(U\subseteq\ol{X}\), its sections are
\(\rH^{0}_{\proet}(U,\Ql(1))=\Ql(1)^{\pi_{0}(U)}\), which is the section group of the constant Zariski
sheaf \(\Ql(1)\). No connectedness of \(\ol{X}\) is needed. By Lemma~\ref{lem:Zar-Ql-weight} applied to
\(T=X\), each
\[
  E_2^{p,0}(\ol{X})\;=\;\rH^{p}_{\Zar}(\ol{X},R^{0}\lambda_{\ol{X},*}\Ql(1))\;=\;\rH^{p}_{\Zar}(\ol{X},\Ql(1))
\]
is a Frobenius module pure of weight \(-2\). (Note that for the singular reducible \(\ol{X}\) the simple
flasqueness argument from Lemma~\ref{lem:F2-weight} on the simplicial side does \emph{not} apply
directly. Lemma~\ref{lem:Zar-Ql-weight} treats this via induction on the number of irreducible
components.)

The outgoing differential \(d_2\colon E_2^{1,1}(\ol{X})\to E_2^{3,0}(\ol{X})\) is Frobenius-equivariant.
Its restriction to the weight-\(0\) subspace \(E_2^{1,1}(\ol{X})^{w=0}\) takes values in the weight-\(0\)
part of \(E_2^{3,0}(\ol{X})\), but \(E_2^{3,0}(\ol{X})\) is pure of weight \(-2\), so its weight-\(0\)
part vanishes. Therefore \(d_2\alpha=0\). For \(r\geq 3\), the
differential \(d_r:E_r^{1-r,r}\to E_r^{1,1}\) has source first index \(<0\) hence source zero, and
\(d_r:E_r^{1,1}\to E_r^{1+r,2-r}\) has target second index \(<0\) hence target zero. Therefore \(\alpha\)
lies in
\[
  E_\infty^{1,1}(\ol{X})\;=\;\ker\bigl(d_2\colon E_2^{1,1}(\ol{X})\to
  E_2^{3,0}(\ol{X})\bigr)\;\subseteq\;E_2^{1,1}(\ol{X}).
\]
The same indexing argument applied on the simplicial side gives
\[
  E_\infty^{1,1}(\ol{X}_\bullet)
  =
  \ker\bigl(d_2\colon E_2^{1,1}(\ol{X}_\bullet)\to E_2^{3,0}(\ol{X}_\bullet)\bigr).
\]

The square of topoi
\[
\begin{tikzcd}[column sep=large, row sep=large]
(\ol{X}_\bullet)_\proet \arrow[r, "\pi_\bullet"] \arrow[d, "\lambda_\bullet"'] & \ol{X}_\proet \arrow[d, "\lambda_{\ol{X}}"] \\
(\ol{X}_\bullet)_\Zar \arrow[r, "\pi_\bullet"] & \ol{X}_\Zar
\end{tikzcd}
\]
induces a filtered morphism from the Leray spectral sequence \eqref{eq:base-leray-SS} for
\(\lambda_{\ol{X}}\) to the simplicial Leray spectral sequence \eqref{eq:simp-Leray-SS}. On
\(E_2^{a,b}\) it is \(\beta_*\pi_\bullet^*\), and on abutments it is the cohomological-descent isomorphism
\[
  \pi_{\bullet,\et}^*:\rH^2_\et(\ol X,\Ql(1))\xrightarrow{\sim}
  \bH^2_\et(\ol X_\bullet,\Ql(1)).
\]
Concretely, both spectral sequences arise from the canonical (Postnikov) filtration on the derived
pushforward along \(\lambda\). Here \eqref{eq:base-leray-SS} comes from the filtered complex
\(R\Gamma_{\Zar}(\ol X,R\lambda_{\ol X,*}\Ql(1))\), and \eqref{eq:simp-Leray-SS} from
\(R\Gamma_{\Zar}(\ol X_\bullet,R\lambda_{\bullet,*}\Ql(1))\). The base-change map \(\beta\) and the
adjunction \((\pi_\bullet^{*},\pi_{\bullet,*})\) furnish a morphism of filtered complexes
\[
  R\Gamma_{\Zar}(\ol X,R\lambda_{\ol X,*}\Ql(1))\longrightarrow
  R\Gamma_{\Zar}(\ol X_\bullet,R\lambda_{\bullet,*}\Ql(1))
\]
respecting the canonical filtrations. The induced morphism of spectral sequences is
\(\beta_*\pi_\bullet^{*}\) on \(E_2\) and \(\pi_{\bullet,\et}^{*}\) on abutments, and it respects the
abutment filtrations and edge maps because these are read off from the filtered complex.

Let \(\bar\alpha\in E_\infty^{1,1}(\ol{X})=F^1\rH^2_\et(\ol{X},\Ql(1))/F^2\rH^2_\et(\ol{X},\Ql(1))\) be
the class represented by \(\alpha\). Since
\(F^2\rH^2_\et(\ol{X},\Ql(1))=E_\infty^{2,0}(\ol{X})\) is a subquotient of the pure weight-\((-2)\)
module \(E_2^{2,0}(\ol{X})\), it is pure of weight \(-2\). Applying the exact functor \(V\mapsto
V^{w=0}\) to
\[
0\to F^2\rH^2_\et(\ol{X},\Ql(1))\to F^1\rH^2_\et(\ol{X},\Ql(1))\to E_\infty^{1,1}(\ol{X})\to0
\]
therefore gives an isomorphism
\[
  \bigl(F^1\rH^2_\et(\ol{X},\Ql(1))\bigr)^{w=0}
  \xrightarrow{\;\sim\;}
  E_\infty^{1,1}(\ol{X})^{w=0}.
\]
Choose the unique weight-zero lift \(\widetilde{\alpha}\in F^1\rH^2_\et(\ol{X},\Ql(1))^{w=0}\) mapping to
\(\bar\alpha\).

The morphism of spectral sequences sends \(\bar\alpha\) on the \(E_\infty\)-page to the image of
\(\eta\in E_2^{1,1}(\ol{X}_\bullet)\) in \(E_\infty^{1,1}(\ol{X}_\bullet)\). Since \(\eta=0\),
the \(E_\infty^{1,1}\)-graded component of \(\pi_{\bullet,\et}^{*}\widetilde{\alpha}\) is zero.
Since the pullback map is filtered, this gives
\[
  \pi_{\bullet,\et}^{*}\widetilde{\alpha}\in F^2\bH^2_\et(\ol{X}_\bullet,\Ql(1)).
\]
The pullback is Frobenius-equivariant, so \(\pi_{\bullet,\et}^{*}\widetilde{\alpha}\) still has weight
\(0\). Lemma~\ref{lem:F2-weight} says that the target \(F^2\bH^2_\et(\ol{X}_\bullet,\Ql(1))\) is pure of
weight \(-2\), hence has no weight-zero subspace. Thus \(\pi_{\bullet,\et}^{*}\widetilde{\alpha}=0\).
Since cohomological descent makes \(\pi_{\bullet,\et}^{*}\) an isomorphism on abutments,
\(\widetilde{\alpha}=0\), and therefore \(\bar\alpha=0\) in \(E_\infty^{1,1}(\ol{X})\).

Finally, on the weight-zero part of \(E_2^{1,1}(\ol{X})\), the inclusion
\(E_\infty^{1,1}(\ol{X})^{w=0}\subseteq E_2^{1,1}(\ol{X})^{w=0}\) is an \emph{equality}. The kernel of
\(d_2\) contains every weight-zero element of \(E_2^{1,1}(\ol{X})\), and incoming differentials at
\((1,1)\) vanish by indexing. Therefore \(\alpha\) viewed in
\(\rH^1_{\Zar}(\ol{X}, \cH^1)^{w=0}=E_2^{1,1}(\ol{X})^{w=0}\) agrees with \(\alpha\) viewed in
\(E_\infty^{1,1}(\ol{X})\), and we conclude \(\alpha=0\).
\end{proof}

\section{Proof of the main theorem}
\label{sec:main-theorem-proof}

The proof assembles the outcomes of the preceding sections. The smooth-proper computation (\S\ref{sec:dlog-smooth-proper}),
transported along a smooth proper hypercover, feeds the simplicial weight decomposition
(\S\ref{sec:simp-cycle-class}) and the auxiliary sheaf \(\cF\) (\S\ref{sec:auxiliary-sheaf-F}). We first pass
to the seminormalisation, then prove the seminormal case by comparing rational Picard descent with the
weight-zero part through \(\cF\).

\medskip\noindent\textbf{Reduction to the seminormal case.}\par\smallskip

\begin{lem}\label{lem:Q-pN-torsion}
Let \(K\) be a perfect field of characteristic \(p>0\), \(X\) a reduced \(K\)-scheme of finite type, and
\(\rho\colon X^{\mathrm{sn}}\to X\) the seminormalisation. The cokernel
\[
  \cQ\;:=\;\rho_{*}\cO_{X^{\mathrm{sn}}}^{\times}/\cO_{X}^{\times}
\]
of Zariski sheaves of abelian groups on \(X\) is annihilated by some power of \(p\).
\end{lem}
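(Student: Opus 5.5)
The plan is to reduce to an affine statement and use the description of seminormalisation in characteristic \(p\): a section of \(\rho_*\cO_{X^{\mathrm{sn}}}\) has \(p^e\)-th power in \(\cO_X\) for \(e\) large. Since \(X\) is of finite type over \(K\), it is noetherian, and \(\rho\) is finite (seminormalisation is dominated by normalisation, which is finite for varieties over a field). So I will cover \(X\) by finitely many affine opens \(\Spec A_i\), with \(\rho^{-1}(\Spec A_i)=\Spec A_i^{\mathrm{sn}}\). Here \(A_i\subseteq A_i^{\mathrm{sn}}\) is a finite extension of reduced rings, because \(\rho\) is finite and birational on a reduced scheme.

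\textbf{Key step.} For each affine \(A=A_i\), I will show there exists \(e\) such that \(b^{p^e}\in A\) for every \(b\in A^{\mathrm{sn}}\). I would use the characterisation that \(A\to A^{\mathrm{sn}}\) is a universal homeomorphism inducing isomorphisms on residue fields (Tag~0EUS). The cleanest route is to invoke the standard fact that, for a finite universal homeomorphism of \(\bbF_p\)-algebras \(A\to B\), each \(b\in B\) has some \(p\)-power \(b^{p^n}\) in the image (Stacks, Tag~0CNE / 0BRA). Since \(A^{\mathrm{sn}}\) is a finite \(A\)-module, I apply this to finitely many module generators \(b_1,\dots,b_m\). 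A uniform exponent then comes from the identity \((\sum a_j b_j)^{p^e}=\sum a_j^{p^e}b_j^{p^e}\) in characteristic \(p\). If that citation is unavailable, I would argue directly by noetherian induction on the conductor: over the dense open where \(A=A^{\mathrm{sn}}\) there is nothing to do, and on the conductor locus the residue field isomorphism plus nilpotence of the kernel of \(B\otimes_A B\to B\) gives \(p\)-powers. I expect this to be the main obstacle, and I would lean on the Stacks citation.

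\textbf{Passing to units and sheafifying.} If \(u\in\Gamma(V,\rho_*\cO^\times_{X^{\mathrm{sn}}})\) for an open \(V\) inside an affine chart, then \(u^{p^e}\) and \(u^{-p^e}\) both lie in \(\Gamma(V,\cO_X)\). Hence \(u^{p^e}\in\cO_X^\times(V)\). The exponent is uniform over the chart, since it is determined by the ring \(A\) and localisation preserves the property. Taking \(N=\max_i e_i\) over the finite cover, \(p^N\) kills every local section of the presheaf quotient on a basis of opens, and therefore kills the sheafified cokernel \(\cQ\). The stalk statement \(\cQ_x\) is also immediate, and the annihilation of a sheaf can be checked on stalks.
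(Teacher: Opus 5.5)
Your proof is correct, but it obtains the key inclusion \((A^{\mathrm{sn}})^{p^N}\subseteq A\) by a different route from the paper.

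\textbf{The paper's route.} It uses no characterisation of universal homeomorphisms. Instead it takes the explicit construction \(A=A_0\subseteq A_1\subseteq\cdots\), where \(A_{i+1}\) is generated over \(A_i\) by those \(c\in Q(A)\) with \(c^2,c^3\in A_i\). Finiteness of \(A^{\mathrm{sn}}\) over \(A\) makes this chain stabilise at some \(A_N=A^{\mathrm{sn}}\). Writing \(p=2u+3v\) gives \(c^p\in A_{i-1}\), so additivity of Frobenius yields \(F(A_i)\subseteq A_{i-1}\) and hence \(F^N(A^{\mathrm{sn}})\subseteq A\).

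\textbf{Your route.} You use only that \(\rho\) is a finite universal homeomorphism of \(\bbF_p\)-schemes. The Stacks characterisation (locally nilpotent kernel, and every element has a \(p\)-power in the image) then gives an exponent for each element. You make it uniform using finitely many module generators and additivity of \(p^e\)-th powers.

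\textbf{Comparison.} Your route is more conceptual and more general. It never uses the residue-field condition, so it proves the same \(p\)-power torsion for \(\cO_Y^\times/\cO_X^\times\) along any finite universal homeomorphism \(Y\to X\) with \(X\) reduced. The cost is that it rests on a nontrivial black box. The paper's argument is elementary and self-contained once the explicit description of \(A^{\mathrm{sn}}\) is granted. It also produces a concrete exponent, namely the length of the chain.

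\textbf{Passage to units.} You note that \(u^{p^e}\) and \(u^{-p^e}\) both lie in \(\cO_X(V)\) and multiply to \(1\) there, by injectivity of \(\cO_X\to\rho_*\cO_{X^{\mathrm{sn}}}\). This is a slightly cleaner replacement for the paper's non-vanishing argument via surjectivity of \(\rho\).

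\textbf{Fallback sketch.} Your alternative via noetherian induction on the conductor is too vague to stand on its own. You do not need it, since the cited characterisation does the work.
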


\begin{proof}
We first establish the additive analogue. There exists \(N\geq 0\) such that
\begin{equation}\label{eq:additive-frob-factor}
  (\rho_{*}\cO_{X^{\mathrm{sn}}})^{p^{N}}\;\subseteq\;\cO_{X}\quad\text{as subsheaves of }\rho_{*}\cO_{X^{\mathrm{sn}}}.
\end{equation}
The statement is local on \(X\), so by quasi-compactness we may pass to a finite affine cover and bound
\(N\) on each member. Replace \(X\) by \(\Spec A\) and \(X^{\mathrm{sn}}\) by \(\Spec B\). The inclusion
\(A\hookrightarrow B=A^{\mathrm{sn}}\) is finite \cite[\S 1]{GrecoTraverso80}.

Let \(A_{0}:=A\) and inductively \(A_{i+1}\) the \(A_{i}\)-subalgebra of \(Q(A)\) generated by \(\{c\in
Q(A):c^{2},c^{3}\in A_{i}\}\). By the explicit construction of the seminormalisation \cite[Tag~0EUK]{stacks-project}, \(B=\bigcup_{i\geq
0}A_{i}\). Since \(B\) is finite over \(A\), the chain stabilises. Let \(N\) be minimal with \(A_{N}=B\).

The absolute Frobenius \(F\colon B\to B\) sends \(A_{i}\) into \(A_{i-1}\) for every \(i\geq 1\). Every
\(x\in A_{i}\) is an \(A_{i-1}\)-polynomial in finitely many \(c_{j}\in Q(A)\) with
\(c_{j}^{2},c_{j}^{3}\in A_{i-1}\). Writing \(p=2u+3v\) with \(u,v\in\bbZ_{\geq 0}\) gives
\(c_{j}^{p}=(c_{j}^{2})^{u}(c_{j}^{3})^{v}\in A_{i-1}\), and Frobenius is additive in characteristic
\(p\), so \(x^{p}\in A_{i-1}\). Iterating, \(F^{N}(B)\subseteq A\), which is
\eqref{eq:additive-frob-factor}.

For the multiplicative statement, let \(U\subseteq X\) be a Zariski open and
\(b\in\rho_{*}\cO_{X^{\mathrm{sn}}}^{\times}(U)\). By~\eqref{eq:additive-frob-factor} we have
\(b^{p^{N}}\in\cO_{X}(U)\). Since \(b^{p^{N}}\) is a unit in \(\rho_{*}\cO_{X^{\mathrm{sn}}}(U)\) and
\(\rho\) is surjective, it is non-vanishing on \(U\), so \(b^{p^{N}}\in\cO_{X}^{\times}(U)\). Therefore
\(p^{N}\cdot[b]=0\) in \(\cQ(U)\).
\end{proof}

\begin{prop}\label{prop:pic-rationalisation}
Let \(K\) be a perfect field of characteristic \(p>0\), and \(X\) a reduced \(K\)-scheme of finite type.
The pullback along the seminormalisation \(\rho\colon X^{\mathrm{sn}}\to X\) induces an isomorphism
\[
  \rho^{*}\colon\Pic(X)\otimes_{\bbZ}\Ql\;\xrightarrow{\;\sim\;}\Pic(X^{\mathrm{sn}})\otimes_{\bbZ}\Ql.
\]
\end{prop}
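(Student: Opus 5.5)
The plan is to compare the Zariski cohomology of \(\cO_X^\times\) and of \(\rho_*\cO^\times_{X^{\mathrm{sn}}}\) through the short exact sequence of Zariski sheaves on \(X\)
\[
  0\to\cO_X^\times\to\rho_*\cO^\times_{X^{\mathrm{sn}}}\to\cQ\to0,
\]
where \(\cQ\) is the cokernel of Lemma~\ref{lem:Q-pN-torsion}. Injectivity of the first map holds because \(\cO_X\to\rho_*\cO_{X^{\mathrm{sn}}}\) is injective, \(X\) being reduced. I then tensor with \(\Ql\). Tensoring is exact by flatness, and sheafification commutes with it, as in (\ref{para:tensor-Ql-sheaves}). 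By Lemma~\ref{lem:Q-pN-torsion}, \(\cQ\) is killed by \(p^N\), and \(p\) is invertible in \(\Ql\) since \(\ell\neq p\), so \(\cQ\otimes\Ql=0\). This gives an isomorphism of Zariski sheaves
\[
  \cO_X^\times\otimes\Ql\xrightarrow{\sim}(\rho_*\cO^\times_{X^{\mathrm{sn}}})\otimes\Ql=\rho_*(\cO^\times_{X^{\mathrm{sn}}}\otimes\Ql).
\]
The last equality holds because \(\rho\) is finite, and in particular affine. So sections of \(\rho_*\) over \(U\) are sections over \(\rho^{-1}(U)\), and (\ref{para:tensor-Ql-sheaves}) applies on both sides.

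Next I take \(\rH^1_{\Zar}\). By Lemma~\ref{lem:pic-via-cartier}, applied to the reduced noetherian schemes \(X\) and \(X^{\mathrm{sn}}\), we have \(\rH^1_{\Zar}(X,\cO_X^\times\otimes\Ql)=\Pic(X)\otimes\Ql\) and \(\rH^1_{\Zar}(X^{\mathrm{sn}},\cO^\times_{X^{\mathrm{sn}}}\otimes\Ql)=\Pic(X^{\mathrm{sn}})\otimes\Ql\). It then remains to identify \(\rH^1_{\Zar}(X,\rho_*\mathcal A)\) with \(\rH^1_{\Zar}(X^{\mathrm{sn}},\mathcal A)\) for \(\mathcal A=\cO^\times_{X^{\mathrm{sn}}}\otimes\Ql\). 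The Leray spectral sequence for \(\rho\) reduces this to showing \(R^1\rho_*\mathcal A=0\). The map \(\rho\) is a finite universal homeomorphism, so it is a homeomorphism of underlying spaces. Hence \(\rho_*\) is an equivalence between Zariski sheaf categories on the same topological space, and all higher direct images vanish. Composing the isomorphisms yields \(\Pic(X)\otimes\Ql\cong\Pic(X^{\mathrm{sn}})\otimes\Ql\).

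The main obstacle is bookkeeping: I must check that this composite is \(\rho^*\) on line bundles. To do this, I will use the naturality of the identification in Lemma~\ref{lem:pic-via-cartier}, or more simply the fact that the map \(\Pic(X)=\rH^1(X,\cO_X^\times)\to\rH^1(X,\rho_*\cO^\times_{X^{\mathrm{sn}}})=\Pic(X^{\mathrm{sn}})\) induced by \(\cO_X^\times\to\rho_*\cO^\times\) is pullback of torsors. Rationalization commutes with this because \(\rH^1\) of \(\mathcal A\otimes\Ql\) agrees with \(\rH^1(\mathcal A)\otimes\Ql\) for these sheaves. For \(\cO^\times\), that is Lemma~\ref{lem:pic-via-cartier}. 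Alternatively, I could avoid rationalizing sheaves altogether. Use the untensored long exact sequence
\[
  \Gamma(X,\cQ)\to\Pic(X)\to\Pic(X^{\mathrm{sn}})\to\rH^1(X,\cQ),
\]
note that both outer groups are \(p^N\)-torsion, and apply the exact functor \(-\otimes\Ql\). This second route is cleaner, and it is the one I would write.
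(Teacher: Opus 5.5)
Your proposal is correct, and the route you say you would write is the paper's proof: the sequence \(1\to\cO_X^\times\to\rho_*\cO^\times_{X^{\mathrm{sn}}}\to\cQ\to1\), the identification \(\rH^1(X,\rho_*\cO^\times_{X^{\mathrm{sn}}})=\Pic(X^{\mathrm{sn}})\) because \(\rho\) is a homeomorphism, and tensoring with \(\Ql\) to kill the \(p^N\)-torsion end terms of the long exact sequence. The sheaf-level variant you sketch first (tensoring the sequence with \(\Ql\) and then using Lemma~\ref{lem:pic-via-cartier}) also works, but it adds nothing.
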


\begin{proof}
The morphism \(\rho\) is a Zariski homeomorphism of underlying topological spaces, so for every sheaf
\(\cF\) on \(X^{\mathrm{sn}}_{\Zar}\), \(\rH^{i}(X,\rho_{*}\cF)=\rH^{i}(X^{\mathrm{sn}},\cF)\) for all
\(i\). In particular
\begin{equation}\label{eq:pushforward-pic}
  \rH^{1}(X,\rho_{*}\cO_{X^{\mathrm{sn}}}^{\times})\;=\;\Pic(X^{\mathrm{sn}}).
\end{equation}
The canonical map \(\cO_{X}^{\times}\hookrightarrow\rho_{*}\cO_{X^{\mathrm{sn}}}^{\times}\) is injective,
so by Lemma~\ref{lem:Q-pN-torsion} we have a short exact sequence of Zariski sheaves on \(X\),
\begin{equation}\label{eq:Q-SES}
  1\;\longrightarrow\;\cO_{X}^{\times}\;\longrightarrow\;\rho_{*}\cO_{X^{\mathrm{sn}}}^{\times}\;\longrightarrow\;\cQ\;\longrightarrow\;1,
\end{equation}
with \(\cQ\) annihilated by \(p^{N}\). Taking Zariski cohomology and using~\eqref{eq:pushforward-pic}
gives the long exact sequence
\[
  \rH^{0}(X,\cQ)\longrightarrow\Pic(X)\xrightarrow{\;\rho^{*}\;}\Pic(X^{\mathrm{sn}})\longrightarrow\rH^{1}(X,\cQ).
\]
Both end terms are \(p^{N}\)-torsion. Tensoring with \(\Ql\) (\(\ell\neq p\)) kills them and gives the
isomorphism.
\end{proof}

\begin{cor}\label{cor:NS-rationalisation}
For a proper reduced \(\ol{k}\)-scheme \(X\) with seminormalisation \(\rho\colon X^{\mathrm{sn}}\to X\),
the pullback \(\rho^{*}\colon\NS(X)\otimes\Ql\to\NS(X^{\mathrm{sn}})\otimes\Ql\) is an isomorphism.
\end{cor}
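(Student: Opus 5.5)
The corollary should follow from Proposition~\ref{prop:pic-rationalisation} combined with the identification \eqref{eq:neron-severi-1}. Assume first that \(X=\ol{X_0}\) comes from a proper reduced scheme \(X_0/k\). Then \(X^{\mathrm{sn}}\) is the base change of \(X_0^{\mathrm{sn}}\): seminormalisation commutes with separable, in particular étale, base change, and \(\ol k/k\) is a filtered colimit of finite étale extensions. Moreover \(X^{\mathrm{sn}}\) is proper, since \(\rho\) is finite (as noted in the proof of Lemma~\ref{lem:Q-pN-torsion}). For a general proper reduced \(\ol k\)-scheme we instead descend \(X\), together with its finite seminormalisation map, to some finite field \(\bbF_{q^n}\). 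Lemma~\ref{lem:torsion-ag} only requires a model over a finite field, so in either case \(\Pic^{0}\otimes\Ql=0\) holds for both \(X\) and \(X^{\mathrm{sn}}\). Hence \eqref{eq:neron-severi-1} gives
\[
  \Pic(X)\otimes\Ql=\NS(X)\otimes\Ql,\qquad \Pic(X^{\mathrm{sn}})\otimes\Ql=\NS(X^{\mathrm{sn}})\otimes\Ql.
\]

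Pullback \(\rho^{*}\colon\Pic(X)\to\Pic(X^{\mathrm{sn}})\) is induced by the morphism of Picard schemes \(\Pic_{X/\ol k}\to\Pic_{X^{\mathrm{sn}}/\ol k}\). This morphism carries the identity component into the identity component, so it descends to \(\rho^{*}\colon\NS(X)\to\NS(X^{\mathrm{sn}})\). We therefore get a commutative square whose horizontal maps are the quotient maps \(\Pic\otimes\Ql\to\NS\otimes\Ql\); these are isomorphisms by the display above. The left vertical map is \(\rho^{*}\) on \(\Pic\otimes\Ql\), which is an isomorphism by Proposition~\ref{prop:pic-rationalisation} applied with \(K=\ol k\), a perfect field of characteristic \(p\), and \(X\) reduced of finite type. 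The right vertical map is then an isomorphism as well.

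The only step needing any care is the first: confirming that \(X^{\mathrm{sn}}\) is again proper and admits a model over a finite field, so that Lemma~\ref{lem:torsion-ag} applies to \(\Pic^{0}_{X^{\mathrm{sn}}/\ol k}\). Properness follows from finiteness of \(\rho\). The model exists because every finite-type \(\ol k\)-scheme is defined over some \(\bbF_{q^n}\). Beyond this, the argument is formal.
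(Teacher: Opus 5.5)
Your proof is correct and follows the paper's own route. You identify \(\Pic\otimes\Ql\) with \(\NS\otimes\Ql\) on both sides via (\ref{para:neron-severi}), then apply Proposition~\ref{prop:pic-rationalisation} with \(K=\ol k\). The step where you descend \(X\) to a finite field is unnecessary: (\ref{para:neron-severi}) already applies to any proper \(\ol k\)-scheme (finiteness of \(\rho\) gives properness of \(X^{\mathrm{sn}}\)), and Lemma~\ref{lem:torsion-ag} itself chooses a finite-field model of the finite-type group \(\Pic^{0}\).
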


\begin{proof}
Apply (\ref{para:neron-severi}) on each side (\(\Pic^{0}\otimes\Ql=0\)) and
Proposition~\ref{prop:pic-rationalisation}.
\end{proof}

\begin{prop}\label{prop:reduction-to-seminormal}
Suppose Theorem~\ref{thm:main-result} holds for proper seminormal reduced \(k\)-schemes. Then it holds
for all proper reduced \(k\)-schemes.
\end{prop}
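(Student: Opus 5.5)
Given proper reduced \(X/k\), let \(\rho\colon X^{\mathrm{sn}}\to X\) be the seminormalisation. Since seminormalisation commutes with étale base change and \(k\) is perfect, \(X^{\mathrm{sn}}\) is a proper seminormal reduced \(k\)-scheme and \(\ol{X^{\mathrm{sn}}}=\ol X^{\mathrm{sn}}\), with \(\bar\rho\) the seminormalisation of \(\ol X\). By functoriality of \(\widetilde c_1\) (\ref{para:cycle-class}) we get a commutative square
\[
\begin{array}{ccc}
  \NS(\ol X)\otimes\Ql & \xrightarrow{\;\widetilde{c}_{1}\;} & \rH^1_{\Zar}(\ol X, \cH^1)^{w=0} \\
  \scriptstyle \bar\rho^{*}\Big\downarrow & & \Big\downarrow\scriptstyle \bar\rho^{*} \\
  \NS(\ol X^{\mathrm{sn}})\otimes\Ql & \xrightarrow{\;\widetilde{c}_{1}\;} & \rH^1_{\Zar}(\ol X^{\mathrm{sn}}, \cH^1)^{w=0}
\end{array}
\]
of \(G_k\)-modules. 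The map \(\bar\rho^*\) on the right is Frobenius-equivariant, so it preserves weight-zero parts. The left vertical map is an isomorphism by Corollary~\ref{cor:NS-rationalisation}. The bottom map is an isomorphism by hypothesis. It therefore suffices to show that the right vertical map is an isomorphism, or even that \(\bar\rho^*\colon\rH^1_{\Zar}(\ol X,\cH^1)\to\rH^1_{\Zar}(\ol X^{\mathrm{sn}},\cH^1)\) is an isomorphism.

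The key step is to show that \(\bar\rho\) induces an isomorphism of Zariski sheaves \(\cH^1_{\ol X}(\Ql(1))\xrightarrow{\sim}\bar\rho_*\cH^1_{\ol X^{\mathrm{sn}}}(\Ql(1))\). Since \(\bar\rho\) is a homeomorphism, \(\rH^1_\Zar(\ol X,\bar\rho_*-)=\rH^1_\Zar(\ol X^{\mathrm{sn}},-)\), and this would conclude. The plan is to use that \(\bar\rho\) is a universal homeomorphism, in fact finite, surjective and radicial. By topological invariance of the étale site, \(\bar\rho\) induces an equivalence \((\ol X^{\mathrm{sn}})_{\et}\simeq\ol X_{\et}\), and likewise on the pro-étale sites, since \(w\)-contractible/pro-étale objects also transport along universal homeomorphisms (\cite{BS15}). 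For every open \(U\subset\ol X\), with preimage \(U^{\mathrm{sn}}\), the pullback \(\rH^q_{\proet}(U,\Ql(1))\to\rH^q_{\proet}(U^{\mathrm{sn}},\Ql(1))\) is therefore an isomorphism. Since the Zariski topologies also agree under \(\bar\rho\), this identifies the presheaves whose sheafifications are \(\cH^q\), for every \(q\). It then identifies the whole Leray spectral sequences of \(\lambda_{\ol X}\) and \(\lambda_{\ol X^{\mathrm{sn}}}\). A more elementary fallback is to check the finite-level Kummer/\(\mu_{\ell^n}\) groups via \cite[Tag~04DY]{stacks-project} (étale cohomology is invariant under universal homeomorphisms) and then pass to the limit as in (\ref{para:proet}).

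The main point to verify carefully is that the identification is compatible with the maps in the square, namely that the right vertical map is the pullback used in \(\widetilde c_1\)-functoriality and is \(G_k\)-equivariant. Both follow because the identification is induced by the morphism \(\bar\rho=\rho\times\ol k\) itself, which commutes with \(\mathrm{id}\times\sigma\). The other subtlety is seminormalisation commuting with base change to \(\ol k\). This holds because \(\ol k/k\) is a filtered colimit of étale extensions and seminormality is preserved under étale (ind-étale) base change \cite[Tag~0EUS]{stacks-project}. Once the right vertical arrow is an isomorphism, the square shows that \(\widetilde c_1\) for \(\ol X\) is a \(G_k\)-equivariant isomorphism.
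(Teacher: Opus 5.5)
Your proposal is correct and is essentially the paper's argument: the paper also writes \(\widetilde c_1\) for \(\ol X\) as \((\rho_{\ol X}^*)^{-1}\circ\widetilde c_1^{\,\mathrm{sn}}\circ\rho_{\ol X}^*\). It uses Corollary~\ref{cor:NS-rationalisation} on the N\'eron--Severi side and topological invariance of the \'etale site under the universal homeomorphism \(\rho_{\ol X}\) on the \(\cH^1\) side. Your ``fallback'' (finite-level \(\mu_{\ell^n}\) isomorphisms on each Zariski open, then \(\varprojlim_n\), \(\otimes\Ql\), and sheafification along the homeomorphism) is exactly the route the paper takes, so the pro-\'etale-site equivalence you propose first is not needed.
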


\begin{proof}
Let \(X\) be proper reduced over \(k\), \(\rho_X\colon X^{\mathrm{sn}}\to X\) its seminormalisation.
Since \(k\) is perfect and \(X\) is reduced, \(X^{\mathrm{sn}}\) is a proper reduced seminormal
\(k\)-scheme \cite[\S 5]{GrecoTraverso80}, and the seminormalisation commutes with base change to
\(\ol{k}\) \cite[Corollary~5.7]{GrecoTraverso80}, so \(\ol{X^{\mathrm{sn}}}=(\ol{X})^{\mathrm{sn}}\) and
\(\rho_{\ol{X}}\colon\ol{X^{\mathrm{sn}}}\to\ol{X}\) is the seminormalisation of \(\ol{X}\). The
hypothesis applied to \(X^{\mathrm{sn}}\) gives a \(G_{k}\)-equivariant isomorphism
\[
  \widetilde{c}_{1}^{\,\mathrm{sn}}\colon\NS(\ol{X^{\mathrm{sn}}})\otimes\Ql\;\xrightarrow{\;\sim\;}\;\rH^1_{\Zar}(\ol{X^{\mathrm{sn}}}, \cH^1)^{w=0}.
\]

\emph{NS-side.} By Corollary~\ref{cor:NS-rationalisation} applied over \(\ol{k}\), the pullback
\(\rho_{\ol{X}}^{*}\colon\NS(\ol{X})\otimes\Ql\xrightarrow{\sim}\NS(\ol{X^{\mathrm{sn}}})\otimes\Ql\) is
a \(G_{k}\)-equivariant isomorphism.

\emph{ZL-side.} The universal homeomorphism \(\rho_{\ol{X}}\) induces an equivalence of small étale topoi
\cite[Exp.~VIII, Th\'{e}or\`{e}me~1.1]{ArtinGrothendieckSGA4} and a homeomorphism on underlying Zariski
spaces. The étale-topos equivalence gives isomorphisms
\(\rho_{\ol{X}}^{*}\colon\rH^{q}_{\et}(V,\mu_{\ell^{n}})\xrightarrow{\sim}\rH^{q}_{\et}(\rho_{\ol{X}}^{-1}(V),\mu_{\ell^{n}})\)
for every Zariski open \(V\subseteq\ol{X}\) and every \(q,n\). Passing to \(\varprojlim_{n}\),
identifying with pro-étale cohomology via (\ref{para:proet}), and tensoring with \(\Ql\) gives the same
for \(\Ql(1)\)-coefficients. Sheafifying along the Zariski homeomorphism identifies
\(\cH^{1}_{\ol{X}}(\Ql(1))\) with \(\cH^{1}_{\ol{X^{\mathrm{sn}}}}(\Ql(1))\) as Zariski sheaves on the
common underlying topological space, and consequently
\(\rH^1_{\Zar}(\ol{X^{\mathrm{sn}}}, \cH^1)=\rH^1_{\Zar}(\ol{X}, \cH^1)\) as \(G_{k}\)-modules with weight grading.

\emph{Composition.} Combining,
\[
  \widetilde{c}_{1}\;=\;(\rho_{\ol{X}}^{*})^{-1}\circ\widetilde{c}_{1}^{\,\mathrm{sn}}\circ\rho_{\ol{X}}^{*}\colon\NS(\ol{X})\otimes\Ql\;\xrightarrow{\;\sim\;}\;\rH^1_{\Zar}(\ol{X}, \cH^1)^{w=0},
\]
where the outer factors are isomorphisms by the NS-side and ZL-side identifications above and the middle
factor by hypothesis. The compatibility of \(\widetilde{c}_{1}\) and
\(\widetilde{c}_{1}^{\,\mathrm{sn}}\) under \(\rho_{\ol{X}}^{*}\) is the contravariant functoriality
of (\ref{para:cycle-class}).
\end{proof}

\medskip\noindent\textbf{Rational N\'eron--Severi descent.}\par\smallskip

\begin{lem}\label{lem:pic-descent-Fpbar}
Let \(X\) be a proper reduced \(k\)-scheme, and let \(\pi_\bullet:X_\bullet\to X\) be a proper
\(h\)-hypercover with each \(X_p\) reduced. Put \(\ol X_\bullet=X_\bullet\times_k\ol k\). Then pullback
induces an isomorphism
\[
  \pi_\bullet^{*}:\Pic(\ol X)\otimes\Ql\xrightarrow{\ \sim\ }
  \operatorname{Eq}\bigl(\Pic(\ol X_0)\otimes\Ql\rightrightarrows\Pic(\ol X_1)\otimes\Ql\bigr).
\]
\end{lem}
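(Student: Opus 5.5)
The plan is to use Bhatt--Scholze \(v\)-descent for vector bundles \cite[Theorem~4.1(ii)]{BhattScholze17}, applied to perfections, and then transport the result back to \(\ol X\) and the \(\ol X_p\) using that rationally \(\Pic\) is insensitive to universal homeomorphisms. For a reduced finite type \(\ol k\)-scheme \(Y\) with perfection \(Y_{\mathrm{perf}}=\varprojlim_{F}Y\), Frobenius pullback acts on \(\Pic(Y)\) as multiplication by \(p\). Hence
\[
  \Pic(Y_{\mathrm{perf}})=\varinjlim_{F^*}\Pic(Y)=\Pic(Y)[p^{-1}],
\]
and after \(\otimes\Ql\) we get \(\Pic(Y)\otimes\Ql\cong\Pic(Y_{\mathrm{perf}})\otimes\Ql\), functorially in \(Y\). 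This is the same mechanism as Lemma~\ref{lem:Q-pN-torsion} and Proposition~\ref{prop:pic-rationalisation}, now applied to the full perfection rather than the seminormalisation. Thus it suffices to prove the statement with \(\Pic(\ol X_{\mathrm{perf}})\) and \(\Pic(\ol X_{p,\mathrm{perf}})\) in place of \(\Pic(\ol X)\) and \(\Pic(\ol X_p)\).

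Next I check that \(\ol X_{\bullet,\mathrm{perf}}\to\ol X_{\mathrm{perf}}\) is a \(v\)-hypercover of perfect qcqs schemes. Proper \(h\)-covers are \(v\)-covers, base change to \(\ol k\) preserves this, and perfection commutes with the finite limits forming the coskeleta up to perfection. Bhatt--Scholze then give an equivalence between vector bundles, in particular line bundles, on \(\ol X_{\mathrm{perf}}\) and descent data along the hypercover. I apply this to the groupoid \(\mathcal{P}ic\) rather than to isomorphism classes, since \(v\)-descent of the stack gives a spectral sequence in low degrees. It yields an exact sequence
\[
  \check{\rH}^1\bigl(\Gamma(\ol X_{\bullet,\mathrm{perf}},\cO^\times)\bigr)\to\Pic(\ol X_{\mathrm{perf}})\to\operatorname{Eq}\bigl(\Pic(\ol X_{0,\mathrm{perf}})\rightrightarrows\Pic(\ol X_{1,\mathrm{perf}})\bigr)\to\check{\rH}^2\bigl(\Gamma(\ol X_{\bullet,\mathrm{perf}},\cO^\times)\bigr).
\]
Here the Čech-type terms are the cohomology of the cosimplicial group of global units of the levels. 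The surjectivity obstruction involves only descent data in degree \(2\), so the descent equivalence for the hypercover makes this sequence exact.

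The main obstacle is to kill the two outer terms after \(\otimes\Ql\). Each \(\ol X_p\) is proper and reduced over \(\ol k\), so \(\Gamma(\ol X_p,\cO)\) is a finite reduced \(\ol k\)-algebra, namely \(\ol k^{\pi_0}\). Its units are a product of copies of \(\ol k^\times\), which is torsion by Lemma~\ref{lem:torsion-ag}, and passing to the perfection does not change this. Hence every term of the cosimplicial unit group is torsion. Its cohomology is therefore torsion and dies after the flat base change \(\otimes\Ql\). Tensoring the exact sequence with \(\Ql\) gives the isomorphism on the perfected side, and the comparison in the first paragraph transfers it back. If the stacky descent sequence proves delicate, the fallback is the Leray spectral sequence for \(\cO^\times\) along the \(v\)-hypercover. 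Bhatt--Scholze identify \(\rH^1_v(-,\cO^\times)\) with \(\Pic\) of the perfection, and the rows with \(b=0\) are the torsion unit groups handled above.
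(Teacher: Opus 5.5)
Your proposal is correct and follows essentially the same route as the paper. You pass to perfections, where \(\Pic(Y^{\mathrm{perf}})=\Pic(Y)[1/p]\) functorially; the paper cites this as \cite[Lemma~3.5]{BhattScholze17}, while you derive it from Frobenius acting by \(p\) on \(\Pic\). You then use \(v\)-descent for line bundles along the \(v\)-hypercover \(\ol X_{\bullet,\mathrm{perf}}\to\ol X_{\mathrm{perf}}\) to get the low-degree exact sequence with \v{C}ech terms of the torsion unit groups, and kill those terms after \(\otimes\Ql\). The only cosmetic difference is that the paper takes your \emph{fallback} as its main argument, reading the sequence off the hypercover spectral sequence for \(\cO^\times\) with \(\rH^1_v=\Pic\) rather than off the stacky descent equivalence; either works here.
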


\begin{proof}
Write \(Y=\ol X\), \(Y_\bullet=\ol X_\bullet\), \(Z=Y^{\mathrm{perf}}\), and
\(Z_\bullet=Y_\bullet^{\mathrm{perf}}\). Perfection commutes with finite limits. Therefore the matching
maps of \(Z_\bullet\to Z\) are the perfections of the matching maps of \(Y_\bullet\to Y\).

We first record the descent input. An \(h\)-cover of qcqs schemes in characteristic \(p\) is a \(v\)-cover
\cite[Example~2.3]{BhattScholze17}. Since maps from perfect valuation rings factor through perfection,
passing to perfections does not change the induced \(v\)-covering condition. Hence
\(Z_\bullet\to Z\) is a \(v\)-hypercover in the category of perfect qcqs schemes.

For a perfect scheme \(T\), Bhatt--Scholze prove that vector bundles on \(W_n(T)\) form a \(v\)-stack
\cite[Theorem~4.1(ii)]{BhattScholze17}. Taking \(n=1\) gives \(v\)-descent for vector bundles on \(T\). Since line bundles form the
rank-one substack and rank is \(v\)-local, the Picard groupoid is likewise a \(v\)-stack. Thus
\[
  \rH^0_v(T,\cO_T^\times)=\Gamma(T,\cO_T^\times),
  \qquad
  \rH^1_v(T,\cO_T^\times)=\Pic(T).
\]
The hypercover spectral sequence
\[
  E_1^{a,b}=\rH^b_v(Z_a,\cO^\times)\Longrightarrow \rH^{a+b}_v(Z,\cO^\times)
\]
\cite[V, (7.4.0.3)]{SGA4_II} therefore gives the exact sequence
\[
  0\to \check H^1(Z_\bullet,\cO^\times)\to \Pic(Z)\to
  \operatorname{Eq}\bigl(\Pic(Z_0)\rightrightarrows\Pic(Z_1)\bigr)
  \to \check H^2(Z_\bullet,\cO^\times),
\]
where the \v{C}ech groups are computed from the cosimplicial groups
\(\Gamma(Z_a,\cO_{Z_a}^\times)\).

The two \v{C}ech terms are torsion. Indeed, each \(Y_a\) is proper and reduced over \(\ol k\), so
\(\Gamma(Y_a,\cO_{Y_a}^\times)\) is a finite product of copies of \(\ol k^\times\), hence torsion. Moreover
\[
  \Gamma(Z_a,\cO_{Z_a}^\times)=\Gamma(Y_a,\cO_{Y_a}^\times)[1/p],
\]
because perfection is the filtered colimit along Frobenius and Frobenius acts on units by
\(u\mapsto u^p\). Thus \(\check H^1\) and \(\check H^2\) are torsion, and they vanish after tensoring with
\(\Ql\).

Finally, \(\Pic(Y)[1/p]\cong\Pic(Y^{\mathrm{perf}})=\Pic(Z)\), functorially in \(Y\), by
\cite[Lemma~3.5]{BhattScholze17}. The same holds levelwise for \(Y_\bullet\). Since \(\ell\neq p\),
tensoring the preceding exact sequence with \(\Ql\) gives the displayed isomorphism for
\(Y_\bullet\to Y\).
\end{proof}

\begin{prop}\label{prop:NS-descent-Fpbar}
Let \(X\) and \(\pi_\bullet:X_\bullet\to X\) satisfy the hypotheses of
Lemma~\ref{lem:pic-descent-Fpbar}. Then
\[
  \pi_\bullet^{*}\colon\NS(\ol X)\otimes\Ql
  \xrightarrow{\;\sim\;}
  \operatorname{Eq}\bigl(\NS(\ol X_0)\otimes\Ql\rightrightarrows\NS(\ol X_1)\otimes\Ql\bigr)
\]
is a \(G_k\)-equivariant isomorphism. The hypercover \(X_\bullet\to X\) is defined over \(k\), so
\(\pi_\bullet^{*}\) and the face-map pullbacks commute with the \(G_k\)-action.
\end{prop}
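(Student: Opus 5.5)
The plan is to deduce this directly from Lemma~\ref{lem:pic-descent-Fpbar} by passing from \(\Pic\) to \(\NS\) levelwise, and then to check equivariance separately.

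\emph{Step 1: identify the two sides.} By (\ref{para:neron-severi}), applied to the proper reduced \(\ol k\)-schemes \(\ol X\) and \(\ol X_a\) for \(a=0,1\), the quotient maps \(\Pic(-)\otimes\Ql\to\NS(-)\otimes\Ql\) are isomorphisms, since \(\Pic^0\otimes\Ql=0\) by Lemma~\ref{lem:torsion-ag}. These identifications are natural for pullback along any morphism of proper \(\ol k\)-schemes, because pullback of line bundles carries \(\Pic^0\) into \(\Pic^0\) (it is induced by a morphism of Picard schemes, which maps identity components to identity components). So the face-map pullbacks \(\Pic(\ol X_0)\otimes\Ql\rightrightarrows\Pic(\ol X_1)\otimes\Ql\) correspond to the face-map pullbacks on \(\NS\otimes\Ql\), and therefore the two equalizers are identified. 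Likewise \(\pi_\bullet^*\) on \(\Pic\otimes\Ql\) corresponds to \(\pi_\bullet^*\) on \(\NS\otimes\Ql\).

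\emph{Step 2: conclude the isomorphism.} Under these identifications, the displayed \(\NS\)-map is exactly the \(\Pic\)-map of Lemma~\ref{lem:pic-descent-Fpbar}. That lemma gives that the \(\Pic\)-map is an isomorphism, so the \(\NS\)-map is one too.

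\emph{Step 3: \(G_k\)-equivariance.} The maps \(\pi_\bullet\) and all face maps are base changes of \(k\)-morphisms. They therefore commute with \(\sigma_{\ol T}=\mathrm{id}_T\times\sigma\) for every \(\sigma\in G_k\), so their pullbacks on \(\Pic\) commute with the \(G_k\)-action. The equalizer is then a \(G_k\)-submodule, and \(\pi_\bullet^*\) is \(G_k\)-equivariant.

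The only point needing care is the naturality of \(\Pic^0\) under pullback, which holds as explained in Step~1. Even that could be bypassed, since \(\Pic\otimes\Ql=\NS\otimes\Ql\) holds functorially. I therefore do not expect a genuine obstacle; the statement is essentially a corollary of Lemma~\ref{lem:pic-descent-Fpbar}.
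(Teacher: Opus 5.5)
Your proposal is correct and follows essentially the same route as the paper. The paper also identifies \(\Pic\otimes\Ql\) with \(\NS\otimes\Ql\) levelwise via \(\Pic^0\otimes\Ql=0\), concludes that the two face-map equalizers agree, and invokes Lemma~\ref{lem:pic-descent-Fpbar}; your remark on naturality of \(\Pic^0\) under pullback and your equivariance check from the \(k\)-structure spell out details the paper leaves implicit.
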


\begin{proof}
By (\ref{para:neron-severi}), \(\Pic^{0}(Y)\otimes\Ql=0\) for any proper
\(Y/\ol k\) (Lemma~\ref{lem:torsion-ag}), so \(\Pic(Y)\otimes\Ql=\NS(Y)\otimes\Ql\) for
\(Y=\ol X\) and componentwise for each \(\ol X_p\). Flatness of \(\Ql\) over \(\bbZ\) makes
the equalizer commute with \(\otimes\Ql\), so the equalizers of \(\NS\otimes\Ql\) and
\(\Pic\otimes\Ql\) along the face maps agree. By (\ref{para:NS-X-bullet}) the
former is \(\NS(\ol{X}_\bullet)\otimes\Ql\). Lemma~\ref{lem:pic-descent-Fpbar} identifies
it with \(\Pic(\ol X)\otimes\Ql=\NS(\ol X)\otimes\Ql\), giving the displayed isomorphism.
\end{proof}

\medskip\noindent\textbf{Proof of the seminormal case.}\par\smallskip

For the rest of this section, assume \(X\) is a proper seminormal reduced \(k\)-scheme. Let
\(\pi_\bullet:X_\bullet\to X\) be a smooth proper sdh-hypercover
(Proposition~\ref{prop:smooth-proper-sdh-hypercover}), and set
\(\ol{X}_\bullet=X_\bullet\times_k\ol{k}\).

\begin{prop}\label{prop:delta-cokernel}
There is a natural \(G_k\)-equivariant map
\[
  \delta:\NS(\ol X)\otimes\Ql\longrightarrow \rH^1_\Zar(\ol X,\cF)
\]
such that
\[
  \delta=u_*\circ\widetilde{c}_{1}.
\]
Moreover \(\delta\) induces an isomorphism
\[
  \delta:\NS(\ol X)\otimes\Ql\xrightarrow{\;\sim\;}\rH^1_\Zar(\ol X,\cF)^{w=0}.
\]
\end{prop}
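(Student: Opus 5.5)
The plan is to define \(\delta:=u_*\circ\widetilde{c}_{1}\) and to detect everything after the injective Leray edge \(j_1\colon\rH^1_\Zar(\ol X,\cF)\hookrightarrow\bH^1_\Zar(\ol X_\bullet,\cH^1_\bullet(\Ql(1)))\) of (\ref{para:F-and-u}). The target of \(j_1\) is already understood through Propositions~\ref{prop:simp-decomp} and~\ref{prop:mu}. The map \(\delta\) is \(G_k\)-equivariant because both \(\widetilde{c}_1\) (by (\ref{para:cycle-class})) and \(u\) (by (\ref{para:F-and-u})) are. Since \(\NS(\ol X)\otimes\Ql\) is pure of weight \(0\) (\ref{para:neron-severi}), \(\delta\) lands in \(\rH^1_\Zar(\ol X,\cF)^{w=0}\).

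\emph{Key identity.} The first step is to prove
\[
  j_1\circ\delta \;=\; \mu\circ\pi_\bullet^{*}\colon \NS(\ol X)\otimes\Ql\longrightarrow \bH^1_\Zar(\ol X_\bullet,\cH^1_\bullet(\Ql(1))),
\]
where \(\pi_\bullet^*\colon\NS(\ol X)\otimes\Ql\to\NS(\ol X_\bullet)\otimes\Ql\) is the pullback of Proposition~\ref{prop:NS-descent-Fpbar}. By \eqref{eq:u-adj}, \(j_1\circ u_*=\beta_*\circ\pi_\bullet^*\). It therefore suffices to show that \(\beta_*\pi_\bullet^*\widetilde{c}_1\) equals \(\mu\) applied to the pulled-back class. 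For this I would use the sheaf-level naturality of \(d\log_\ell\). The square formed by
\[
  \pi_\bullet^{*}(\cO^\times_{\ol X}\otimes\Ql)\to\cO^\times_{\ol X_\bullet}\otimes\Ql
  \qquad\text{and}\qquad
  \pi_\bullet^*\cH^1_{\ol X}\xrightarrow{\beta}\cH^1_\bullet,
\]
with the two \(d\log_\ell\) maps as verticals, commutes. This holds levelwise by (\ref{para:cycle-class}) and is compatible with the face maps. Taking \(\bH^1\), and using that the identification \(\bH^1_\Zar(\ol X_\bullet,\cO^\times\otimes\Ql)=\NS(\ol X_\bullet)\otimes\Ql\) from the proof of Proposition~\ref{prop:mu} is compatible with pullback from \(\Pic(\ol X)\otimes\Ql=\rH^1_\Zar(\ol X,\cO^\times\otimes\Ql)\) (Lemma~\ref{lem:pic-via-cartier}), gives the identity. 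I expect this compatibility bookkeeping to be the main obstacle. It requires checking that the skeletal-spectral-sequence edge for \(\cO^\times\otimes\Ql\) is compatible with the map \(\pi_\bullet^*\) on \(\rH^1\). I would verify it by writing both sides as maps of filtered total complexes, exactly as was done for the Leray spectral sequences in Proposition~\ref{prop:u-inj}.

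\emph{Injectivity and surjectivity.} The smooth proper sdh-hypercover is a proper \(h\)-hypercover, since sdh-covers are \(h\)-covers, and its levels are smooth, hence reduced. Proposition~\ref{prop:NS-descent-Fpbar} therefore applies and \(\pi_\bullet^*\) is an isomorphism. By Proposition~\ref{prop:mu}, \(\mu\) is injective with image exactly \(\bH^1_\Zar(\ol X_\bullet,\cH^1_\bullet(\Ql(1)))^{w=0}\). Hence \(j_1\circ\delta\) is injective, so \(\delta\) is injective.

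For surjectivity, take \(x\in\rH^1_\Zar(\ol X,\cF)^{w=0}\). Since \(j_1\) is Frobenius-equivariant, \(j_1x\) has weight \(0\). It therefore lies in \(\operatorname{im}\mu=\operatorname{im}(\mu\circ\pi_\bullet^*)=\operatorname{im}(j_1\circ\delta)\), say \(j_1x=j_1\delta(y)\). Injectivity of \(j_1\) then gives \(x=\delta(y)\).

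The weight grading on \(\rH^1_\Zar(\ol X,\cF)\) is by definition the one induced by \(j_1\) (\ref{para:F-and-u}), so no further compatibility is needed there. Seminormality is not used in this argument itself. It enters later, when this proposition is combined with Proposition~\ref{prop:u-inj} and with the degree-zero sdh descent of Lemma~\ref{lem:sdh-degree-zero}.
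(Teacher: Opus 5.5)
Your argument is correct, but it gets the square $j_1\circ\delta=\mu\circ\pi_\bullet^*$ by a different route from the paper.

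The paper defines $\delta$ as the map induced by $d\log_\ell$ on the first terms of the two Leray five-term sequences for $\pi_\bullet$, with coefficients $\cO^\times_{\ol X_\bullet}\otimes\Ql$ and $\cH^1_\bullet(\Ql(1))$. The square is then automatic from naturality. To identify the top-left term $\rH^1_\Zar(\ol X,R^0\pi_{\bullet,*}(\cO^\times_{\ol X_\bullet}\otimes\Ql))$ with $\NS(\ol X)\otimes\Ql$, however, it needs $R^0\pi_{\bullet,*}(\cO^\times_{\ol X_\bullet}\otimes\Ql)=\cO^\times_{\ol X}\otimes\Ql$. This is exactly where seminormality and Lemma~\ref{lem:sdh-degree-zero} enter. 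The equality $\delta=u_*\circ\widetilde{c}_1$ is then a separate Kummer-functoriality check.

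You instead take $\delta:=u_*\circ\widetilde{c}_1$ as the definition. You prove the square from \eqref{eq:u-adj}, sheaf-level Kummer naturality, and the fact that the skeletal edge $\bH^1_\Zar(\ol X_\bullet,\cO^\times\otimes\Ql)\to E_2^{0,1}$ is restriction to level $0$. Hence the composite from $\Pic(\ol X)\otimes\Ql$ is $\pi_0^*$. This bookkeeping is routine and goes through.

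Nothing in your chain uses seminormality. Proposition~\ref{prop:mu} and Proposition~\ref{prop:u-inj} hold for any proper $X$. Proposition~\ref{prop:NS-descent-Fpbar} only needs $X$ proper reduced with an $h$-hypercover whose levels are reduced, which you correctly verified. So your closing remark is inaccurate: in your route seminormality never enters. Your proof gives the proposition for every proper reduced $X$, and hence Theorem~\ref{thm:main-result} directly, without Lemma~\ref{lem:sdh-degree-zero} or Proposition~\ref{prop:reduction-to-seminormal}.

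This is consistent with the paper. Without seminormality its top-left term would be $\Pic(\ol X^{\mathrm{sn}})\otimes\Ql$, which Proposition~\ref{prop:pic-rationalisation} identifies with $\Pic(\ol X)\otimes\Ql$ anyway. The paper's route buys an intrinsic description of $\delta$ as a map of Leray terms. Yours buys a shorter dependency chain with no sdh input.
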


\begin{proof}
\(\ol X\) is seminormal because seminormality is preserved by separable extension
\cite[Corollary~5.7]{GrecoTraverso80}. Hence Lemma~\ref{lem:sdh-degree-zero}, applied over \(\ol k\),
gives
\[
  R^0\pi_{\bullet,*}\bigl(\cO^\times_{\ol X_\bullet}\otimes\Ql\bigr)
  =\cO^\times_{\ol X}\otimes\Ql ,
  \qquad
  R^0\pi_{\bullet,*}\cH^1_\bullet(\Ql(1))=\cF
\]
by definition of \(\cF\). The two Leray five-term sequences for \(\pi_\bullet\), with coefficients
\(\cO^\times_{\ol X_\bullet}\otimes\Ql\) and \(\cH^1_\bullet(\Ql(1))\), are linked by
\(d\log_\ell\). In degree \(1\), the top row identifies with
\[
  \NS(\ol X)\otimes\Ql
  \xrightarrow{\;\pi_\bullet^*\;}
  \NS(\ol X_\bullet)\otimes\Ql .
\]
On \(\ol X\) this is Lemma~\ref{lem:pic-via-cartier} and (\ref{para:neron-severi}). On
\(\ol X_\bullet\), the skeletal spectral sequence for
\(\cO^\times_{\ol X_\bullet}\otimes\Ql\) has zero \(b=0\) row because the global units on every smooth
proper level are torsion, and its \(b=1\) row is \(\Pic(\ol X_a)\otimes\Ql=\NS(\ol X_a)\otimes\Ql\).
The bottom row begins with
\[
  0\to\rH^1_\Zar(\ol X,\cF)
  \xrightarrow{\;j_1\;}
  \bH^1_\Zar(\ol X_\bullet,\cH^1_\bullet(\Ql(1))).
\]
Thus \(d\log_\ell\) gives a commutative square
\[
\begin{tikzcd}[column sep=large]
\NS(\ol X)\otimes\Ql \arrow[r,"\pi_\bullet^*"] \arrow[d,"\delta"'] &
\NS(\ol X_\bullet)\otimes\Ql \arrow[d,"\mu"]\\
\rH^1_\Zar(\ol X,\cF) \arrow[r,"j_1"'] &
\bH^1_\Zar(\ol X_\bullet,\cH^1_\bullet(\Ql(1))),
\end{tikzcd}
\]
where \(\mu\) is the simplicial cycle class of Proposition~\ref{prop:mu}.

The equality \(\delta=u_*\circ\widetilde{c}_{1}\) follows from Kummer functoriality. Indeed
\[
  d\log_{\ell,\bullet}\circ\pi_\bullet^\#=
  \beta\circ\pi_\bullet^*d\log_{\ell,\ol X}
\]
as maps
\(\pi_\bullet^*(\cO_{\ol X}^{\times}\otimes\Ql)\to\cH^1_\bullet(\Ql(1))\). After applying
\(\pi_{\bullet,*}\), the left side is \(d\log_{\ell,\ol X}\) through
\(R^0\pi_{\bullet,*}(\cO^\times_{\ol X_\bullet}\otimes\Ql)=\cO^\times_{\ol X}\otimes\Ql\), while the
right side is \(u\circ d\log_{\ell,\ol X}\) by the definition of the augmentation \(u\). Passing to
\(\rH^1_\Zar\) gives the claimed factorisation.

Now \(\pi_\bullet^*\) is an isomorphism by Proposition~\ref{prop:NS-descent-Fpbar}, while
Proposition~\ref{prop:mu} identifies \(\operatorname{im}\mu\) with the weight-\(0\) subspace of the
simplicial group on the lower right. Since \(j_1\) is injective, the equality
\(j_1\delta=\mu\pi_\bullet^*\) makes \(\delta\) injective. If
\(x\in\rH^1_\Zar(\ol X,\cF)^{w=0}\), then \(j_1(x)\) is weight \(0\), hence
\(j_1(x)=\mu(y)\) for a unique \(y\in\NS(\ol X_\bullet)\otimes\Ql\). Write
\(y=\pi_\bullet^*(z)\) using N\'eron--Severi descent. Then
\(j_1(x)=j_1\delta(z)\), and the injectivity of \(j_1\) gives \(x=\delta(z)\). This proves the
weight-zero isomorphism.
\end{proof}

\begin{proof}[Proof of Theorem~\ref{thm:main-result}]
By Proposition~\ref{prop:reduction-to-seminormal}, it suffices to treat the case where \(X\) is also
seminormal. We maintain this assumption for the remainder of the proof, retaining
\(\pi_\bullet\colon X_\bullet\to X\) and \(\ol{X}_\bullet\) as fixed above.

By Proposition~\ref{prop:delta-cokernel}, \(\delta = u_*\circ\widetilde{c}_{1}\).
Frobenius-equivariance of \(\widetilde{c}_{1}\) (\ref{para:cycle-class}) and weight-\(0\) purity of
\(\NS(\ol{X})\otimes\Ql\) (\ref{para:neron-severi}) factor \(\widetilde{c}_{1}\) through
\(\rH^1_{\Zar}(\ol{X}, \cH^1)^{w=0}\) (Lemma~\ref{lem:Frobenius-module}(iii)). Restricting both sides to weight
zero,
\[
  \delta|_{w=0}\;=\;(u_*)^{w=0}\circ\widetilde{c}_{1}\colon\;\NS(\ol{X})\otimes\Ql\to\rH^1_\Zar(\ol{X},\cF)^{w=0}.
\]
The composite is an isomorphism (Proposition~\ref{prop:delta-cokernel}), and the second factor is a monomorphism
(Proposition~\ref{prop:u-inj}, which holds for any proper scheme over \(k\) and so applies in particular
to the seminormal \(X\)). A factorisation of an isomorphism through a monomorphism forces both factors to
be isomorphisms, so
\(\widetilde{c}_{1}\colon\NS(\ol{X})\otimes\Ql\xrightarrow{\sim}\rH^1_{\Zar}(\ol{X}, \cH^1)^{w=0}\) is the
asserted \(G_{k}\)-equivariant isomorphism in the seminormal case, and hence
(Proposition~\ref{prop:reduction-to-seminormal}) for all proper reduced \(k\)-schemes.
\end{proof}

\medskip\noindent\textbf{Relation to the Tate conjecture for divisors.}\par\smallskip

The \(\ell\)-adic first Chern class
\(c_{1}\colon\Pic(\ol{X})\otimes\Ql\to\rH^{2}_{\et}(\ol{X},\Ql(1))\) is the étale counterpart of
\(\widetilde{c}_{1}\). We compare it with the finite-order part of the full étale group.

\begin{para}[The finite-order part]\label{para:finite-order-part}
For a Frobenius module \((V,F)\) write
\(V_{\mathrm{fo}}:=\bigcup_{n\geq 1}\ker(F^{n}-\mathrm{id}_{V})\), the sum of the eigenspaces for
eigenvalues that are roots of unity. As a root of unity is a \(q\)-Weil number of weight \(0\), one has
\(V_{\mathrm{fo}}\subseteq V^{w=0}\) and \(V_{\mathrm{fo}}=(V^{w=0})_{\mathrm{fo}}\). A morphism \(f\colon V\to V'\) satisfies
\(f(V_{\mathrm{fo}})\subseteq V'_{\mathrm{fo}}\), and \(V\mapsto V_{\mathrm{fo}}\) is left exact, hence
commutes with equalizers. For \(\ol{X}\) the base change of a proper \(k\)-scheme,
\(\rH^{2}_{\et}(\ol{X},\Ql(1))_{\mathrm{fo}}\) consists of the classes that become Galois-invariant over a
finite extension of \(k\).
\end{para}

\begin{lem}\label{lem:fo-descent}
Let \(X\) be a proper reduced \(k\)-scheme and \(\pi_\bullet\colon X_\bullet\to X\) a smooth proper
hypercover, \(\ol{X}_\bullet=X_\bullet\times_k\ol{k}\). Cohomological descent (\ref{para:simp-setup})
restricts to a \(G_k\)-equivariant isomorphism
\[
  \rH^{2}_{\et}(\ol{X},\Ql(1))_{\mathrm{fo}}\;\xrightarrow{\;\sim\;}\;
  \operatorname{Eq}\Bigl(\rH^{2}_{\et}(\ol{X}_0,\Ql(1))_{\mathrm{fo}}\;\rightrightarrows\;\rH^{2}_{\et}(\ol{X}_1,\Ql(1))_{\mathrm{fo}}\Bigr)
\]
onto the equalizer of the two face-map pullbacks.
\end{lem}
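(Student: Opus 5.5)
Our plan is to realise the map and read off its properties from the descent spectral sequence for étale cohomology along \(\pi_\bullet\). Cohomological descent (\ref{para:simp-setup}) identifies \(\rH^{n}_{\et}(\ol X,\Ql(1))\) with \(\bH^{n}_{\et}(\ol X_\bullet,\Ql(1))\), \(G_k\)-equivariantly. The skeletal filtration then gives a Frobenius-equivariant spectral sequence
\[
  E_1^{a,b}=\rH^{b}_{\et}(\ol X_a,\Ql(1))\Longrightarrow \rH^{a+b}_{\et}(\ol X,\Ql(1)),
\]
whose \(d_1\) is the alternating sum of face pullbacks. The edge map \(\rH^2_{\et}(\ol X,\Ql(1))\to E_\infty^{0,2}\subseteq E_2^{0,2}\) is the composite of \(\pi_0^*\) with the inclusion of \(E_2^{0,2}=\operatorname{Eq}\bigl(\rH^2_{\et}(\ol X_0)\rightrightarrows\rH^2_{\et}(\ol X_1)\bigr)\). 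We would then apply the left-exact functor \((-)_{\mathrm{fo}}\), which commutes with equalizers by (\ref{para:finite-order-part}). This turns the edge map into a map into the displayed equalizer of finite-order parts, and it is compatible with \(G_k\).

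The key input is weights. Each \(\ol X_a\) is smooth proper, so by Deligne \cite[Th\'{e}or\`{e}me~1.6]{Del80} \(\rH^{b}_{\et}(\ol X_a,\Ql(1))\) is pure of weight \(b-2\). Hence every \(E_r^{a,b}\) is pure of weight \(b-2\), by Lemma~\ref{lem:Frobenius-module}(ii). Two consequences follow.

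First, the kernel of the edge map is \(F^1\rH^2\). It is filtered by subquotients of \(E_2^{1,1}\) and \(E_2^{2,0}\), which have weights \(-1\) and \(-2\). So \(F^1\rH^2\) has no weight-zero part, and in particular no finite-order part. Since \(V_{\mathrm{fo}}\subseteq V^{w=0}\), the restricted map is injective.

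Second, for surjectivity we would show that every weight-zero element of \(E_2^{0,2}\) survives to \(E_\infty\). The outgoing differentials go \(d_2\colon E_2^{0,2}\to E_2^{2,1}\), of weight \(-1\), and \(d_3\colon E_3^{0,2}\to E_3^{3,0}\), of weight \(-2\). Both vanish on the weight-zero part by Lemma~\ref{lem:Frobenius-module}(iii), and there are no incoming differentials at \((0,2)\). Hence \((E_\infty^{0,2})^{w=0}=(E_2^{0,2})^{w=0}\).

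By exactness of \(V\mapsto V^{w=0}\) (Lemma~\ref{lem:Frobenius-module}(iv)) applied to \(0\to F^1\to\rH^2\to E_\infty^{0,2}\to0\), we get \(\rH^2_{\et}(\ol X,\Ql(1))^{w=0}\xrightarrow{\sim}(E_2^{0,2})^{w=0}\). This is an isomorphism of Frobenius modules. Taking finite-order parts on both sides then gives \(\rH^2_{\et}(\ol X,\Ql(1))_{\mathrm{fo}}\cong(E_2^{0,2})_{\mathrm{fo}}\). The right-hand side is the displayed equalizer, since \((-)_{\mathrm{fo}}\) commutes with equalizers and \(V_{\mathrm{fo}}=(V^{w=0})_{\mathrm{fo}}\).

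The step needing most care is setting up the descent spectral sequence with \(\Ql(1)\)-coefficients, and checking that its abutment filtration and edge map are Frobenius-equivariant and match \(\pi_0^*\). We would get this exactly as in (\ref{para:simp-setup}): work with \(\mu_{\ell^n}\), pass to the limit, and then pass to pro-étale cohomology levelwise. The convergence is unproblematic because the filtration in degree \(2\) has only three steps.
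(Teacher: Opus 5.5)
Your proposal is correct and follows essentially the same route as the paper. Both arguments use the Frobenius-equivariant skeletal spectral sequence \(E_1^{a,b}=\rH^b_{\et}(\ol X_a,\Ql(1))\), with its abutment identified by cohomological descent, together with purity of weight \(b-2\), which shows that \(F^1\) has no weight-zero part and that the differentials out of \((0,2)\) vanish on weight zero, followed by left-exactness of \((-)_{\mathrm{fo}}\) to reach the equalizer of finite-order parts. The only difference is cosmetic: the paper works on weight-zero and finite-order parts exactly as you do, and your observation that \(E_2^{0,2}\) is pure of weight \(0\) simply makes that restriction automatic.
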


\begin{proof}
The étale analogue of the skeletal spectral sequence \eqref{eq:skeletal-SS} for the simplicial étale
topos \((\ol{X}_\bullet)_{\et}\) of (\ref{para:simp-setup}),
\[
  E_{1}^{p,q}=\rH^{q}_{\et}(\ol{X}_p,\Ql(1))\;\Longrightarrow\;\bH^{p+q}_{\et}(\ol{X}_\bullet,\Ql(1)),
\]
is Frobenius-equivariant, and cohomological descent (\ref{para:simp-setup}) identifies its abutment with
\(\rH^{p+q}_{\et}(\ol{X},\Ql(1))\). Each \(\ol{X}_p\) is smooth proper over \(\ol{k}\), so \(E_{1}^{p,q}\)
is pure of weight \(q-2\) \cite[Th\'{e}or\`{e}me~1.6]{Del80}. By Lemma~\ref{lem:Frobenius-module}(ii) so
is every subquotient, and in total degree \(2\) the graded pieces \(E_{\infty}^{p,2-p}\) are pure of
weight \(-p\).

Write \(H=\rH^{2}_{\et}(\ol{X},\Ql(1))\) with abutment filtration \(F^{0}\supseteq F^{1}\supseteq
F^{2}\supseteq F^{3}=0\) and \(F^{p}/F^{p+1}=E_{\infty}^{p,2-p}\). Then \(F^{1}\) has weights
\(\subseteq\{-1,-2\}\). Applying the exact functor \((-)^{w=0}\)
(Lemma~\ref{lem:Frobenius-module}(iv)) to
\[
  0\to F^{1}\to H\to E_{\infty}^{0,2}\to 0
\]
gives
\(H^{w=0}\xrightarrow{\sim}E_{\infty}^{0,2}\). Since \(H_{\mathrm{fo}}=(H^{w=0})_{\mathrm{fo}}\)
(\ref{para:finite-order-part}), also \(H_{\mathrm{fo}}\xrightarrow{\sim}(E_{\infty}^{0,2})_{\mathrm{fo}}\).

At the edge \(p=0\) there are no incoming differentials, so
\(E_{\infty}^{0,2}=\bigcap_{r\geq2}\ker\bigl(d_{r}\colon E_{r}^{0,2}\to E_{r}^{r,3-r}\bigr)\subseteq
E_{2}^{0,2}\). For \(r\geq2\) the target is pure of weight \(1-r\leq-1\), so \(d_{r}\) vanishes on
weight-\(0\) parts (Lemma~\ref{lem:Frobenius-module}(iii)), and as \(V_{\mathrm{fo}}\subseteq V^{w=0}\)
we get \((E_{\infty}^{0,2})_{\mathrm{fo}}=(E_{2}^{0,2})_{\mathrm{fo}}\). Finally
\(E_{2}^{0,2}=\ker\bigl(d_{1}\colon E_{1}^{0,2}\to E_{1}^{1,2}\bigr)\) with \(d_{1}\) the difference of
the two face-map pullbacks. Left-exactness of \((-)_{\mathrm{fo}}\) (\ref{para:finite-order-part}) gives
\[
  (E_{2}^{0,2})_{\mathrm{fo}}=\operatorname{Eq}\bigl(\rH^{2}_{\et}(\ol{X}_0,\Ql(1))_{\mathrm{fo}}\rightrightarrows\rH^{2}_{\et}(\ol{X}_1,\Ql(1))_{\mathrm{fo}}\bigr).
\]
Composing the three isomorphisms proves the claim. Each is \(G_k\)-equivariant because the spectral
sequence and the descent isomorphism are \(G_k\)-equivariant (\ref{para:simp-setup}).
\end{proof}

\begin{prop}\label{prop:tate-divisors-fo}
Assume that for every smooth proper \(k\)-scheme \(Y\) the \(\ell\)-adic cycle class
\(\NS(\ol{Y})\otimes\Ql\to\rH^{2}_{\et}(\ol{Y},\Ql(1))_{\mathrm{fo}}\) is an isomorphism onto the
finite-order part (\ref{para:finite-order-part})\footnote{Equivalently, this is the Tate conjecture for
divisors \cite[Conjecture~$T^{1}$]{Tate94} over every finite extension of \(k\), since a finite-order class
becomes Frobenius-invariant over such an extension.}. Then for every proper reduced \(k\)-scheme \(X\) the
\(\ell\)-adic cycle class induces a \(G_k\)-equivariant isomorphism
\[
  c_{1}\colon\NS(\ol{X})\otimes\Ql\;\xrightarrow{\;\sim\;}\;\rH^{2}_{\et}(\ol{X},\Ql(1))_{\mathrm{fo}}.
\]
Combined with Theorem~\ref{thm:main-result}, the cycle class identifies all three groups:
\(\NS(\ol{X})\otimes\Ql\) maps isomorphically onto \(\rH^1_{\Zar}(\ol{X}, \cH^1)^{w=0}\) (by
\(\widetilde{c}_{1}\)) and onto \(\rH^{2}_{\et}(\ol{X},\Ql(1))_{\mathrm{fo}}\) (by \(c_{1}\)), so in
particular \(\rH^1_{\Zar}(\ol{X}, \cH^1)^{w=0}\cong\rH^{2}_{\et}(\ol{X},\Ql(1))_{\mathrm{fo}}\).
\end{prop}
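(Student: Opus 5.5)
The plan is to descend along a smooth proper hypercover, running the finite-order version of Lemma~\ref{lem:fo-descent} in parallel with the rational N\'eron--Severi descent of Proposition~\ref{prop:NS-descent-Fpbar}. First we reduce to the seminormal case if needed, though this is likely unnecessary: both sides are insensitive to seminormalisation. The \(\NS\otimes\Ql\) side is handled by Corollary~\ref{cor:NS-rationalisation}. The étale side is handled by the topological invariance of étale cohomology under the universal homeomorphism \(\rho_{\ol X}\), exactly as in the ZL-side step of Proposition~\ref{prop:reduction-to-seminormal}.

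Choose a smooth proper sdh-hypercover \(\pi_\bullet\colon X_\bullet\to X\) (Proposition~\ref{prop:smooth-proper-sdh-hypercover}). Its levels are reduced and it is a proper \(h\)-hypercover, so Lemma~\ref{lem:pic-descent-Fpbar} and Proposition~\ref{prop:NS-descent-Fpbar} apply. We then consider the commutative square
\[
\begin{tikzcd}[column sep=large]
\NS(\ol X)\otimes\Ql \arrow[r,"\pi_\bullet^*"] \arrow[d,"c_1"'] &
\operatorname{Eq}\bigl(\NS(\ol X_0)\otimes\Ql\rightrightarrows\NS(\ol X_1)\otimes\Ql\bigr) \arrow[d,"c_1"]\\
\rH^2_\et(\ol X,\Ql(1))_{\mathrm{fo}} \arrow[r,"\pi_\bullet^*"'] &
\operatorname{Eq}\bigl(\rH^2_\et(\ol X_0,\Ql(1))_{\mathrm{fo}}\rightrightarrows\rH^2_\et(\ol X_1,\Ql(1))_{\mathrm{fo}}\bigr).
\end{tikzcd}
\]
The left vertical arrow lands in the finite-order part because \(F^N\) acts trivially on \(\NS(\ol X)\) (\ref{para:neron-severi}) and \(c_1\) is Frobenius-equivariant. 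The square commutes by Kummer functoriality of \(c_1\) (\ref{para:cycle-class}). The top arrow is an isomorphism by Proposition~\ref{prop:NS-descent-Fpbar}, and the bottom arrow is an isomorphism by Lemma~\ref{lem:fo-descent}. The right vertical arrow is an isomorphism by the hypothesis, applied to the smooth proper \(X_0\) and \(X_1\), since the levelwise isomorphisms commute with the face-map pullbacks. Hence the left arrow is an isomorphism. \(G_k\)-equivariance is inherited from all four arrows.

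Two points need care. The first is that the hypothesis must be applied to \(X_0\) and \(X_1\), which are finite disjoint unions of smooth proper \(k\)-schemes. If a component is not geometrically connected, it is still a smooth proper \(k\)-scheme, so the hypothesis applies to it directly. The statement is additive over components, so this causes no trouble. The second is that the bottom arrow of the square must really be the restriction of the descent isomorphism composed with the edge map. This is precisely what Lemma~\ref{lem:fo-descent} asserts.

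The final sentence of the statement then follows by composing with Theorem~\ref{thm:main-result}: the map \(c_1\circ\widetilde c_1^{-1}\) identifies \(\rH^1_\Zar(\ol X,\cH^1)^{w=0}\) with \(\rH^2_\et(\ol X,\Ql(1))_{\mathrm{fo}}\). It is compatible with \(\iota\) and \(\theta_X\) of Proposition~\ref{prop:ZL-frobenius-comparison} because \(c_1\) factors through \(F^1\). The main obstacle I expect is the commutativity of the square on the bottom-right corner, namely checking that the étale first Chern class on \(\ol X\), pushed through descent and the \(E_2^{0,2}\)-edge, equals the levelwise classes. I would verify this by naturality of the Kummer connecting map for the morphism of simplicial étale topoi \((\ol X_\bullet)_\et\to\ol X_\et\).
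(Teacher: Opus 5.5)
Your proposal is correct and matches the paper's proof. Both use the same square: Proposition~\ref{prop:NS-descent-Fpbar} on top, Lemma~\ref{lem:fo-descent} on the bottom, and the Tate hypothesis on $\ol X_0,\ol X_1$ to identify the equalizers; the only difference is that the paper gets levelwise injectivity from Proposition~\ref{cor:cycle-class-smooth-proper} and uses the hypothesis only for surjectivity. The seminormal reduction is indeed unnecessary, and the commutativity you flag as the main obstacle is just functoriality of $c_1$ along $\pi_0\colon\ol X_0\to\ol X$, because descent followed by the edge map to $E_1^{0,2}=\rH^2_{\et}(\ol X_0,\Ql(1))$ is simply $\pi_0^*$.
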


\begin{proof}
Fix a smooth proper hypercover \(\pi_\bullet\colon X_\bullet\to X\)
(Proposition~\ref{prop:smooth-proper-sdh-hypercover}). For \(p=0,1\), \(\ol{X}_p\) is a finite disjoint
union of smooth proper varieties, so
\(\widetilde{c}_{1}\colon\NS(\ol{X}_p)\otimes\Ql\xrightarrow{\sim}\rH^1_{\Zar}(\ol{X}_p, \cH^1)\)
(Proposition~\ref{cor:cycle-class-smooth-proper}) followed by the Leray edge (Lemma~\ref{lem:leray-edge})
exhibits \(c_{1}\) on \(\ol{X}_p\) as injective with image in the finite-order part (\(\NS\otimes\Ql\) is
pure of weight \(0\) and \(F\)-finite by (\ref{para:neron-severi})). The Tate hypothesis makes it onto,
hence \(c_{1}\colon\NS(\ol{X}_p)\otimes\Ql\xrightarrow{\sim}\rH^{2}_{\et}(\ol{X}_p,\Ql(1))_{\mathrm{fo}}\).
These commute with the face-map pullbacks (functoriality of the cycle class, (\ref{para:cycle-class})),
hence identify equalizers. The source equalizer is \(\NS(\ol{X})\otimes\Ql\)
(Proposition~\ref{prop:NS-descent-Fpbar}) and the target is \(\rH^{2}_{\et}(\ol{X},\Ql(1))_{\mathrm{fo}}\)
(Lemma~\ref{lem:fo-descent}), so the induced map on equalizers is the desired \(G_k\)-equivariant
isomorphism \(c_{1}\) on \(\ol{X}\). The three-way identification combines this with
Theorem~\ref{thm:main-result}.
\end{proof}

\begin{example}[Sharpness of the weight-zero condition]\label{ex:normal-weight-example}
The weight-zero restriction in Theorem~\ref{thm:main-result} cannot be dropped, even for normal projective
surfaces. Let \(X\subset\bbP^{4}\) be the complete-intersection surface of Barbieri-Viale and Srinivas
\cite[Example~1]{BarbieriVialeSrinivas93},
\[
  X=\{x^{3}+y^{3}+z^{3}=0,\ \ ax^{2}+by^{2}+cz^{2}+dv^{2}=w^{2}\}\subset\bbP^{4},
\]
the double cover of the cubic cone over the elliptic curve \(E=\{x^{3}+y^{3}+z^{3}=0\}\), branched along a
general quadric section avoiding the vertex (\(\operatorname{char}k\neq2,3\), and the admissible \((a,b,c,d)\) form a nonempty Zariski-open locus, so such a section exists over \(\ol k\)). It is normal and projective with
two simple-elliptic singular points \(v_{0},v_{\infty}\). Its minimal resolution \(f\colon S\to X\) is the double
cover of the resolved cone (a \(\bbP^{1}\)-bundle over \(E\)) branched along the quadric section, with exceptional
curves \(C_{0},C_{\infty}\cong E\). Thus \(q(S)=1\) and \(\rH^{1}(\ol S,\Ql(1))=V:=\rH^{1}(\ol E,\Ql(1))\), each
restriction \(\rH^{1}(\ol S,\Ql(1))\to\rH^{1}(\ol C_{\iota},\Ql(1))\) an isomorphism
\cite[Example~1]{BarbieriVialeSrinivas93}. Here \(V\) is pure of weight \(-1\) and of dimension \(2\)
\cite[Th\'{e}or\`{e}me~1.6]{Del80}.

The abstract blow-up square with \(C_{0}\sqcup C_{\infty}\to S\) and \(\{v_{0},v_{\infty}\}\to X\) gives, by proper
base change \cite[Exp.~XII]{ArtinGrothendieckSGA4}, an exact boundary
\(\partial\colon\rH^{1}(\ol C_{0},\Ql(1))\oplus\rH^{1}(\ol C_{\infty},\Ql(1))\to\rH^{2}(\ol X,\Ql(1))\) whose
kernel is the diagonal image of \(\rH^{1}(\ol S,\Ql(1))\). Hence \(\partial\) embeds
\(\coker(V\xrightarrow{\Delta}V\oplus V)\cong V\) into \(\rH^{2}(\ol X,\Ql(1))\) as a weight-\((-1)\) subspace. It
is Zariski-locally trivial. On \(X\setminus\{v_{1-\iota}\}\) the local boundary vanishes, since the restriction
\(\rH^{1}(\ol S,\Ql(1))\to\rH^{1}(\ol C_{\iota},\Ql(1))\) is an isomorphism and factors through
\(\rH^{1}(\ol S\setminus\ol C_{1-\iota},\Ql(1))\), which therefore surjects onto \(\rH^{1}(\ol C_{\iota},\Ql(1))\). Hence the class
dies on the cover \(X=(X\setminus\{v_{\infty}\})\cup(X\setminus\{v_{0}\})\) and lies in
\(F^{1}\rH^{2}(\ol X,\Ql(1))=\rH^1_{\Zar}(\ol X, \cH^1)\) (Lemma~\ref{lem:leray-edge}). Therefore
\(\rH^1_{\Zar}(\ol X, \cH^1)\) contains the nonzero weight-\((-1)\) space \(V\) and is not pure of weight \(0\).
\end{example}

\begin{rem}
Over \(\bbC\), Barbieri-Viale and Srinivas \cite[Example~1]{BarbieriVialeSrinivas93} find that the discrepancy
between \(\NS(X)\) and \(\rH^{1}_{\Zar}(X,\cH^1)\) has rank \(7\). Besides \(V\) it carries a rank-\(5\) contribution from
the subgroup of \(\Pic^{0}(E)\) generated by the degree-zero parts of the six points
\(\{x^{3}+y^{3}+z^{3}=0\}\cap\{ax^{2}+by^{2}+cz^{2}=0\}\). Over \(\ol k\) that subgroup is torsion
(Lemma~\ref{lem:torsion-ag}), so it dies after \(\otimes\,\Ql\) and only the weight-\((-1)\) part \(V\) survives.
\end{rem}

\section*{Acknowledgements}

We thank Luca Barbieri-Viale, H\'{e}l\`{e}ne Esnault and Stefan Schreieder for helpful comments on an
earlier version of this article. KVS would like to thank the Department of Mathematics at the
University at Buffalo for its hospitality during a visit in which part of this work was carried out.

\bibliographystyle{alpha-custom}
\bibliography{bib/references}

\end{document}